\newtheorem{theorem}{Theorem}[section]
\newtheorem*{theorem*}{Theorem}
\newtheorem{lemma}[theorem]{Lemma}
\newtheorem{proposition}{Proposition}[section]
\theoremstyle{definition}
\newtheorem{definition}{\sc Definition}[section]
\newtheorem*{definition*}{\sc Definition}
\newtheorem{example}{\bf Example}[section]
\newtheorem{remark}{Remark}[section]
\newtheorem*{remark*}{Remark}
\newtheorem*{example*}{\sc Example}
\newtheorem*{examples}{\sc Examples}
\newcommand{\loc}{{\rm loc}}
\newcommand{\Real}{{\rm Re}}
\newcommand{\mydiv}{{\rm div\,}}
\def\expandafter\normalsize\expandafter{%
    \normalsize
    \setlength\abovedisplayshortskip{8pt}
    \setlength\belowdisplayshortskip{8pt}
}
\begin{document}

\title[Parabolic equations with singular vector fields]{Regularity for parabolic equations with singular non-zero divergence vector fields}

\author{D.\,Kinzebulatov}

\address{Universit\'{e} Laval, D\'{e}partement de math\'{e}matiques et de statistique, 1045 av.\,de la M\'{e}decine, Qu\'{e}bec, QC, G1V 0A6, Canada}

\email{damir.kinzebulatov@mat.ulaval.ca}

\author{Yu.\,A.\,Sem\"{e}nov}

\address{University of Toronto, Department of Mathematics, 40 St.\,George Str, Toronto, ON, M5S 2E4, Canada}

\email{semenov.yu.a@gmail.com}

\keywords{Regularity theory, form-bounded vector fields, heat kernel bounds, Nash's method}

\subjclass[2010]{35K08, 47D07 (primary), 60J35 (secondary)}

\thanks{The research of D.K.\,is supported by the Natural Sciences and Engineering Research Council of Canada (grant RGPIN-2017-05567)}

\begin{abstract}
We establish two-sided Gaussian bounds on the heat kernel of divergence-form parabolic equation with singular time-inhomogeneous vector field satisfying some minimal assumptions.
\end{abstract}

\maketitle


\section{Introduction}

In this paper we study regularity properties of solutions of parabolic equation
\begin{equation}
\label{eq1}
(\partial_t - \nabla \cdot a \cdot \nabla + b \cdot \nabla)u=0, \quad \text{ on } \mathbb R_+ \times \mathbb R^d, \quad d \geq 3,
\end{equation}
under broad assumptions on a singular (that is, locally unbounded) time-inhomogeneous vector field $b:\mathbb R_+ \times \mathbb R^d \rightarrow \mathbb R^d$. Here and everywhere below,
$\mathbb R_+:=[0,\infty[$,
$$
\nabla \cdot a \cdot \nabla u(x):=\sum_{i,j=1}^d \nabla_i \bigl(a_{ij} \nabla_j u(x)\bigr), \quad b \cdot \nabla u(x):=\sum_{k=1}^d b_k \nabla_k u(x).
$$
The matrix $a:\mathbb R_+ \times \mathbb R^d \rightarrow \mathbb R^d \otimes \mathbb R^d$ is symmetric, uniformly elliptic, i.e.\,
\begin{equation}
\label{a_hyp}
\tag{$H_{\sigma,\xi}$}
\begin{array}{c}
\text{$\sigma I \leq a(t,x) \leq \xi I$ \quad for some $\xi,\sigma>0$ for a.e.\,$(t,x)$},
\end{array}
\end{equation}
and, other than that, is assumed to be only measurable. 

\medskip

In the absence of drift term $b \cdot \nabla$, already the hypothesis $a \in H_{\sigma,\xi}$ provides local H\"{o}lder continuity of solutions to \eqref{eq1}. This is known since the works of E.\,De Giorgi \cite{DG} and J.\,Nash \cite{N}. Moreover, the corresponding heat kernel satisfies two-sided Gaussian bounds,  see Aronson \cite{Ar}, or see \cite{N} and Fabes-Stroock \cite{FS}. 

\medskip

However, with non-zero $b \cdot \nabla$, the regularity theory of \eqref{eq1} is not yet complete even in the case $a=I$. Some particularly important classes of vector fields $b$ are the divergence-free vector fields (i.e.\,${\rm div\,}b=0$) and, more generally, 
vector fields that have singular divergence ${\rm div\,}b \in L^1_{\loc}$. Parabolic equations with such vector fields, which are in the focus of this paper, arise in hydrodynamics and other physical applications.
We study one of the central questions in the regularity theory of \eqref{eq1}: what are the minimal assumptions on $b$ so that the heat kernel of \eqref{eq1} admits upper and/or lower Gaussian bounds? Such bounds, once established, open up the way to proving a number of other regularity results for \eqref{eq1}.
\medskip

The present paper continues \cite{KiS_MathAnn} where we dealt with time-homogeneous $a$, $b$ and investigated how the lower and the upper Gaussian heat kernel bounds depend on the divergence of $b$. This paper also continues  \cite{S} where two-sided Gaussian bounds for divergence-free time-inhomogeneous $b$ were established. 

\medskip

In \cite{KiS_MathAnn} and \cite{S}, the proof of two-sided Gaussian bounds uses a condition on $|b|:=(\sum_{k=1}^d b_k^2)^{1/2}$ (see Definition \ref{def2}). This condition, although quite general, does not allow to take into account possible oscillations in vector field $b$. In the present paper we handle oscillations in $b$ and also allow $b$ to have time-inhomogeneous singular divergence. This required a comprehensive modification of the argument from the two cited papers.
Possible oscillations of vector fields are taken into account using the class $L^\infty{\rm BMO}^{-1}$ of divergence-free vector fields whose components are 
distributional derivatives of functions of bounded mean oscillation 
(we recall the definition in the next section). The  results on the regularity theory of equation \eqref{eq1} with $b \in L^\infty{\rm BMO}^{-1}$ include, in particular, the parabolic Harnack inequality established, among other results, in Seregin-Silvestre-\v{S}verak-Zlatos \cite{SSSZ}, and global in time two-sided Gaussian bounds proved by Qian-Xi in \cite{QX}. Their Gaussian bounds in the a priori form follow as a special case of our Theorems \ref{thm0_}, \ref{thm0__}. Earlier, Osada \cite{O} proved two-sided Gaussian  heat kernel bounds for $b$ in $L^\infty(L^{-1}_\infty) \subset L^\infty{\rm BMO}^{-1}$, where $L_\infty^{-1}$ is the class of divergence-free vector fields whose components are 
distributional derivatives of functions in $L^\infty$.

\medskip

Let us now state our hypothesis on $b$ and describe our results. For notations, see Section \ref{notations_sect}.

\begin{definition}
\label{def1}
We say that a distributional vector field $b:\mathbb R_+ \rightarrow [\mathcal S']^d$ belongs to class $\mathbf{M}_\delta$ if for a.e.\,$t \in \mathbb R_+$
\begin{equation}
\label{b_cond}
|\langle b(t)\psi,\psi\rangle |  \leq \delta \|\nabla \psi\|_2\|\psi\|_2 + g(t)\|\psi\|_2^2
\end{equation}
for all $\psi$ in the L.\,Schwartz class $\mathcal S$,
for a non-negative function  $g$ that satisfies, on every finite interval $[0,T]$,
\begin{equation}
\label{g_cond}
\int_s^t g(\tau) d\tau \leq c\sqrt{t-s}, \quad s,t \in [0,T]
\end{equation}
for a constant $c=c_{\delta,T}$ (written as $b \in \mathbf{M}_\delta$, ``multiplicative class'').

\end{definition}

The multiplicative class $\mathbf{M}_\delta$ will also appear in another, more restrictive form: 

\begin{definition}
\label{def2}
A vector field $b \in [L^1_{\loc}(\mathbb R_+ \times \mathbb R^d)]^d$ belongs to class $\mathbf{MF}_\delta$ if for a.e.\,$t \in \mathbb R_+$
\begin{equation}
\label{hat_b}
\langle |b(t)|\psi,\psi\rangle   \leq \delta \|\nabla \psi\|_2\|\psi\|_2 + g(t)\|\psi\|_2^2, \quad \psi\in W^{1,2}
\end{equation}
under the same assumption on $g$ as in the previous definition (written as $b \in \mathbf{MF}_\delta$, multiplicatively form-bounded vector fields). 
\end{definition}

Note that both classes $\mathbf{M}_\delta$ and $\mathbf{MF}_\delta$ are closed with respect to addition (up to change of $\delta$).

\begin{examples}
The following vector fields $b$ are in $\mathbf{MF}_\delta$ for appropriate $\delta$.

1.~Vector fields in the critical Ladyzhenskaya-Prodi-Serrin class:
\begin{equation}
\label{LPS_c}
\tag{${\rm LPS}_c$}
|b| \in L^s(\mathbb R_+,L^r), \quad \frac{d}{r}+\frac{2}{s} \leq 1, \quad 2 \leq s \leq \infty, \;\;d \leq r \leq \infty.
\end{equation}

2. Vector fields $b$ that belong, uniformly in $t \in \mathbb R$, to the scaling-invariant Morrey class $M_{1+\varepsilon}$ with $\varepsilon>0$ fixed arbitrarily small:
\begin{equation}
\label{morrey}
\sup_{t \in \mathbb R}\|b(t)\|_{M_{1+\varepsilon}}=\sup_{t \in \mathbb R}\sup_{r>0, x \in \mathbb R^d} r\biggl(\frac{1}{|B_r(x)|}\int_{B_r(x)}|b(t,y)|^{1+\varepsilon}dy \biggr)^{\frac{1}{1+\varepsilon}}<\infty
\end{equation}
where $B_r(x)$ is the ball of radius $r$ centered at $x$, and $\varepsilon$ is fixed arbitrarily small.

See proofs of inclusions in Appendix \ref{examples_sect}.

3.~If $g \in L^2_{\loc}(\mathbb R_+)$, then condition \eqref{g_cond} obviously holds. 
Another example is $$g(t)=|t|^{-\frac{1}{2}},$$ which still satisfies \eqref{g_cond} 
but is not in $L_\loc^2(\mathbb R_+)$. 

\medskip

The function $g$ describes singularities of the vector field $b$ in the time variable.
In particular, if $b$ is bounded in the spatial variables, then condition \eqref{g_cond} says that $|b|$ can have at most $t^{-\frac{1}{2}}$ singularities in time,
so the
speed at which the streaming induced by $b$ displaces the solution is comparable to the speed at which the viscosity diffuses it. In this regard, see Carlen-Loss \cite{CL} who showed that the velocity fields in the Burgers and 2D Navier-Stokes vorticity equations have at most $t^{-1/2}$ time singularities. 
Let us add that if one is dealing with conditions on $b$ that are invariant with respect to the parabolic scaling, such as $b \in \mathbf{M}_\delta$ (so that there is a hope of arriving at the regularity results for parabolic equation \eqref{eq1} comparable to those for the heat equation, including two-sided Gaussian bounds), then one cannot extend $\|b(t)\|_\infty \leq c t^{-1/2}$ to $\|b(t)\|_\infty \leq c t^{-1/2-\varepsilon}$ regardless of how small one fixes $\varepsilon>0$ without destroying the parabolic scaling. See also \cite{QX2} in this regard.
\end{examples}

In Theorem \ref{thm0_}, we prove a priori upper Gaussian bound on the heat kernel of \eqref{eq1} assuming 
that $b \in \mathbf{M}_\delta$ for some finite $\delta$ and that the positive part $({\rm div\,}b)_+$ of divergence ${\rm div\,}b$ has sufficiently small Kato norm (cf.\,\eqref{kato}). Here ``a priori'' refers to the fact that Theorem \ref{thm0_} is proved for smooth $a$, $b$, but the constants in the upper Gaussian bound do not depend on the smoothness of $a$ and $b$.

\medskip

In Theorem \ref{thm0__}, we prove lower Gaussian bound on a heat kernel of \eqref{eq1} under more restrictive assumption on the vector field: 
$$
b=\hat{b}+\tilde{b}
$$
where  $\hat{b} \in \mathbf{MF}_\delta$ for some $\delta<\infty$ and $g \in L^2(\mathbb R_+)$, with ${\rm div\,}\hat{b}$ having sufficiently small Kato norm, and divergence-free $\tilde{b}$ is in the class $L^\infty {\rm BMO}^{-1}$.

\medskip

The fact that the assumptions of Theorem \ref{thm0_} are broader than the assumptions of Theorem \ref{thm0__} is seen from the straightforward inclusion $\mathbf{MF}_\delta \subset \mathbf{M}_\delta$ and the following proposition.

\begin{proposition}
\label{emb_prop}
If $\tilde{b} \in 
L^\infty{\rm BMO}^{-1},$ then $\tilde{b} \in \mathbf{M}_\delta$. 
\end{proposition}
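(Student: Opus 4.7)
I first unpack the definition: $\tilde{b}\in L^\infty{\rm BMO}^{-1}$ means there is an antisymmetric matrix field $F=(F_{jk})_{j,k=1}^d$ with entries in $L^\infty(\mathbb R_+;{\rm BMO}(\mathbb R^d))$ such that $\tilde{b}_k=\sum_j\partial_j F_{jk}$ in the distributional sense; in particular, $\tilde{b}$ is divergence-free. I read the pairing $\langle\tilde{b}(t)\psi,\psi\rangle$ componentwise as the $\mathbb R^d$-valued evaluation $(\langle\tilde{b}_k(t),\psi^2\rangle)_{k=1}^d$ endowed with its Euclidean norm---the interpretation under which the ``straightforward inclusion'' $\mathbf{MF}_\delta\subset\mathbf{M}_\delta$ really is a consequence of $|\int b\,\psi^2|_{\mathbb R^d}\le\int|b|\,\psi^2$. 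The goal is then to bound $|\int\tilde{b}(t,\cdot)\,\psi^2\,dx|_{\mathbb R^d}$ by the right-hand side of \eqref{b_cond}.

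Integration by parts component by component, for $\psi\in\mathcal S$,
$$\int\tilde{b}_k(t,\cdot)\,\psi^2\,dx=-\sum_{j=1}^d\int F_{jk}(t,\cdot)\,\partial_j(\psi^2)\,dx=-2\sum_{j=1}^d\int F_{jk}(t,\cdot)\,\psi\,\partial_j\psi\,dx,$$
so the proof reduces to the quantitative BMO bound
$$\Bigl|\int F\,\partial_j(\psi^2)\,dx\Bigr|\le C\,\|F\|_{{\rm BMO}(\mathbb R^d)}\,\|\psi\|_2\,\|\nabla\psi\|_2,\qquad F\in{\rm BMO}(\mathbb R^d),\ \psi\in\mathcal S. \qquad (\ast)$$
I would deduce $(\ast)$ from ${\rm BMO}$-$H^1$ duality combined with the compensated-compactness Hardy-space bound $\|\partial_j(\psi^2)\|_{H^1(\mathbb R^d)}\le C\|\psi\|_2\|\nabla\psi\|_2$; the Hardy-space bound follows from the Coifman-Lions-Meyer-Semmes div-curl lemma after a Hodge decomposition $\psi e_j=\nabla\phi+V$ with $V$ divergence-free, which places the expression $\psi\partial_j\psi=\psi e_j\cdot\nabla\psi=V\cdot\nabla\psi+\nabla\phi\cdot\nabla\psi$ into the div-free-versus-curl-free form required by CLMS on the principal term $V\cdot\nabla\psi$, with the residual $\nabla\phi\cdot\nabla\psi$ handled via the Riesz-transform representation $\partial_k\phi=-R_kR_j\psi$.

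Summing $(\ast)$ over $j,k$ and using the uniform ${\rm BMO}$-bound on $F$ in the time variable,
$$\Bigl|\int\tilde{b}(t,\cdot)\,\psi^2\,dx\Bigr|_{\mathbb R^d}\le 2C\sum_{j,k=1}^d\|F_{jk}\|_{L^\infty{\rm BMO}}\,\|\psi\|_2\|\nabla\psi\|_2 =: \delta\,\|\nabla\psi\|_2\|\psi\|_2.$$
Since the estimate is uniform in $t$, taking $g\equiv 0$ (which trivially satisfies \eqref{g_cond}) yields $\tilde{b}\in\mathbf{M}_\delta$, completing the argument.

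\textbf{Main obstacle.} The hard part is the Hardy-space bound $\|\partial_j(\psi^2)\|_{H^1}\le C\|\psi\|_2\|\nabla\psi\|_2$: the expression has the right scaling and vanishing integral, but quantitative $H^1$-membership is not immediate because $\psi\partial_j\psi$ is not directly in the div-free-times-curl-free form of the standard CLMS theorem. A technically cleaner alternative---and one that sidesteps Hardy space entirely---is to apply Fefferman's decomposition ${\rm BMO}=L^\infty+\sum_\ell R_\ell L^\infty$, writing each $F_{jk}=g_{jk}+\sum_\ell R_\ell h_{jk,\ell}$ with $g_{jk},h_{jk,\ell}\in L^\infty$ (antisymmetrized in $(j,k)$), then handling the $L^\infty$-matrix part by the elementary estimate of Osada \cite{O} and the Riesz-transformed part by shifting $R_\ell$ onto the test function $\psi^2\in\mathcal S$ via $L^2$-duality.
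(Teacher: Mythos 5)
Your outer reduction is exactly the paper's: write $\tilde b_k=\sum_j\nabla_j B_{jk}$ with $B$ skew-symmetric in $L^\infty\,{\rm BMO}$, integrate by parts to get $\langle \tilde b_k(t)\psi,\psi\rangle=-2\sum_j\langle B_{jk}(t)\,\psi\nabla_j\psi\rangle$, invoke $\mathcal H_1$--${\rm BMO}$ duality, and conclude with $g\equiv 0$ (your componentwise reading of $\langle \tilde b(t)\psi,\psi\rangle$ also matches the paper's convention). The whole weight of the argument therefore rests on the quantitative Hardy-space bound $\|\psi\nabla_j\psi\|_{\mathcal H_1}\le C\|\nabla\psi\|_2\|\psi\|_2$, which the paper does not reprove but quotes as Proposition \ref{qian_xi_prop} (Qian--Xi, Prop.\,3.2). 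You correctly identify this as the crux, but neither of your two routes to it closes the gap.

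Concretely: in the Hodge route you split $\psi e_j=\nabla\phi+V$ with ${\rm div\,}V=0$, and CLMS does handle $V\cdot\nabla\psi$; but the remainder $\nabla\phi\cdot\nabla\psi$ is a product of two curl-free fields, which has no div-curl structure and is not generically in $\mathcal H_1$ (compare $|\nabla\psi|^2$, a nonnegative function of positive integral, hence never in $\mathcal H_1$). The representation $\nabla_k\phi=-R_kR_j\psi$ only gives $\nabla\phi\in L^2$ with $\|\nabla\phi\|_2\le\|\psi\|_2$, hence only an $L^1$ bound on the product, and an $L^1$ function cannot be paired with a ${\rm BMO}$ entry of $B$; so this term is simply left unestimated. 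The Fefferman-decomposition alternative fares no better: after writing $B_{jk}=g_{jk}+\sum_\ell R_\ell h_{jk,\ell}$ with $h_{jk,\ell}\in L^\infty$ (not $L^2$, so there is no ``$L^2$-duality'' to invoke), moving $R_\ell$ onto the test function requires the bound $\|R_\ell\nabla_j(\psi^2)\|_{1}\lesssim\|\psi\|_2\|\nabla\psi\|_2$, and by the Riesz-transform characterization of $\mathcal H_1$ (namely $F\in\mathcal H_1$ iff $F$ and all $R_\ell F$ lie in $L^1$) this is equivalent to the very Hardy bound you set out to avoid. So the proposal reproduces the paper's reduction but leaves its key lemma unproved; to repair it, either cite the bound (Qian--Xi Prop.\,3.2, or the corresponding CLMS-type estimate) or give a genuine proof of it, e.g.\ via the grand maximal function characterization of $\mathcal H_1$ applied to $\nabla_j(\psi^2)$.
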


The class $\mathbf{M}_\delta$ is the largest class of vector fields considered in this paper.

\medskip

The question of uniqueness of the heat kernel of \eqref{eq1} under the assumption $b \in \mathbf{M}_\delta$ is non-trivial. It is known that the heat kernel is unique if vector field $b$ belongs to $L^\infty{\rm BMO}^{-1}$,  a particular sub-class of $\mathbf{M}_\delta$. This is a result of Qian-Xi \cite{QX}, which they proved using the Lions' approach in the standard for equation $\eqref{eq1}$ triple of Hilbert spaces $W^{1,2} \subset L^2 \subset W^{-1,2}$. See detailed statement below. In this paper we single out another important sub-class of $\mathbf{M}_\delta$ for which one can prove uniqueness of the heat kernel, although only in the case $a=I$.

\begin{definition}
A vector field $b \in [L^1_{\loc}(\mathbb R_+ \times \mathbb R^d)]^d$ is said to be weakly form-bounded  (written as $b \in L^\infty\,\mathbf{F}_\delta^{\scriptscriptstyle 1/2}$) if, for a.e.\,$t \in \mathbb R_+$, 
\begin{equation}
\label{wfbd}
\||b(t)|^\frac{1}{2}\psi\|_2 \leq \sqrt{\delta} \|(\lambda-\Delta)^{\frac{1}{4}}\psi\|_2
\end{equation}
for all $ \psi \in \mathcal W^{\frac{1}{2},2}:=(\lambda-\Delta)^{-\frac{1}{4}}L^2$ (Bessel potential space)
for some constant $\lambda=\lambda_{\delta}\geq 0$. 
\end{definition}

We have 
\begin{equation}
\label{incl}
L^\infty\,\mathbf{F}_\delta^{\scriptscriptstyle 1/2} \subset \mathbf{MF}_\delta.
\end{equation}
Indeed, if $b\in L^\infty\,\mathbf{F}_\delta^{\scriptscriptstyle 1/2}$ and $\psi\in W^{1,2}$ then
\begin{align*}
\langle|b(t)|\psi,\psi\rangle &\leq \delta \langle (\lambda-\Delta)^{\frac{1}{4}}\psi,(\lambda-\Delta)^{\frac{1}{4}}\psi \rangle =\delta\langle(\lambda-\Delta)^\frac{1}{2}\psi,\psi\rangle\\
& \leq \delta \|(\lambda-\Delta)^{\frac{1}{2}}\psi\|_2\|\psi\|_2 = \delta\sqrt{\|\nabla \psi\|_2^2 + \lambda\|\psi\|_2^2}\|\psi\|_2 \leq \delta \|\nabla \psi\|_2\|\psi\|_2+\delta\sqrt{\lambda}\|\psi\|_2, 
\end{align*}
so $b \in \mathbf{MF}_\delta$ with $g(t)=\delta\sqrt{\lambda}$.

\medskip

The classes of vector fields mentioned in Examples 1 and 2 above are all contained in $L^\infty\,\mathbf{F}_\delta^{\scriptscriptstyle 1/2}$.

\medskip

The class $L^\infty\,\mathbf{F}_\delta^{\scriptscriptstyle 1/2}$ is quite different from  $L^\infty{\rm BMO}^{-1}$. Indeed, the proof of uniqueness of the heat kernel of the parabolic equation
\begin{equation}
\label{eq0}
(\partial_t - \Delta + b \cdot \nabla)u=0
\end{equation}
with $b \in L^\infty\,\mathbf{F}_\delta^{\scriptscriptstyle 1/2}$ requires one to run Lions' approach in a non-standard triple of Hilbert spaces $$\mathcal W^{\frac{3}{2},2} \subset \mathcal W^{\frac{1}{2},2} \subset \mathcal W^{-\frac{1}{2},2}.$$
In particular, the solution of a Cauchy problem for \eqref{eq0} satisfies different energy inequalities depending on whether $b$ is in  $L^\infty\,\mathbf{F}_\delta^{\scriptscriptstyle 1/2}$ or in $L^\infty{\rm BMO}^{-1}$, cf.\,\eqref{ei2} and \eqref{ei0_bmo}. 
Put another way, the Sobolev embedding properties of $-\Delta+b \cdot \nabla$ change
 drastically as one transitions from  $L^\infty {\rm BMO}^{-1}$ to $L^\infty \mathbf{F}_\delta^{\scriptscriptstyle 1/2}$.
This allows us to conclude that these two subclasses of $\mathbf{M}_\delta$ are quite far apart.

\medskip

Let us emphasize that the lower bound in 
Theorem \ref{thm0__} is new even in the case $a=I$, $b=b(x)$.

\medskip

Concerning the lower and upper Gaussian bounds for time-homogeneous singular drifts having singular divergence, we refer, in addition to \cite{KiS_MathAnn}, to earlier results by Liskevich-Zhang \cite{LZ} who also considered form-boundedness and Kato class conditions on $|b|$ and ${\rm div\,}b$. See detailed comparison of the results in \cite[Sect.\,1]{KiS_MathAnn}.

\medskip

The proof of the upper bound in Theorem \ref{thm0_} uses the Moser iteration method. The proof of the lower bound in Theorem \ref{thm0__} uses a rather non-standard version of Nash's method \cite{N}. 
More precisely, the assumption $b \in L^\infty\mathbf{F}_\delta^{\scriptscriptstyle 1/2}$ prohibits, even if ${\rm div\,}b=0$,  the use of quadratic inequalities in the analysis of the Nash $G$-functions in the proof of the a priori lower Gaussian  bound (Theorem \ref{thm2}). As a consequence, one needs to use a relatively sophisticated regularization of the Nash $G$-functions. 
Arguably, $L^\infty\mathbf{F}_\delta^{\scriptscriptstyle 1/2}$ is the ``hard part'' of $\mathbf{M}_\delta$.

\medskip

In this paper, the proof of the lower bound uses the upper bound. We mention, however, \cite[Theorem 1]{KiS_MathAnn}, where it was demonstrated that these bounds are, in principle, independent. Namely, a lower Gaussian bound on the heat kernel of \eqref{eq1} (with time-homogeneous coefficients) holds provided that $b$ is form-bounded with $\delta<4\sigma^2$ and ${\rm div\,}b \geq 0$ (in the sense of distributions), in which case an upper Gaussian bound is in general invalid. Conversely, there are situations where an upper Gaussian bound holds but there are no lower Gaussian bounds, see \cite{KiS_MathAnn} for details.

\medskip

It should be added that we prove global in time heat kernel bounds, so our conditions on $g$ are global, but it is straightforward to make them local in time with local conditions on $g$ as above.

\medskip

Let us note that the classes of singular vector fields discussed in this paper are scaling-invariant. It is known that one can venture beyond the scaling-invariance, considering \eqref{eq0} with $b \in L^2_{\loc}(\mathbb R_+ \times \mathbb R^d)$, ${\rm div\,}b=0$ such that,  for some
$0<a \leq 1$ and $\delta<\infty$, 
\begin{align*}
 \int_0^\infty \langle |b(t,\cdot)|^{1+a}\varphi^2(t,\cdot)\rangle dt   \leq \delta \int_0^\infty\|\nabla \varphi(t,\cdot)\|_2^2 dt, \quad \forall \varphi \in C_c^\infty(\mathbb R_+ \times \mathbb R^d),
\end{align*}
containing e.g.\,zero-divergence $b=b(x)$ with $|b| \in L^{p}$, $p>\frac{d}{2}$ (essentially, twice more singular than \eqref{eq0}), see \cite{Z1}. See also \cite{QX2}. Although in this case one has to sacrifice much of the regularity theory of \eqref{eq0} and \eqref{eq1}, some results can be salvaged. This includes boundedness of weak solutions,  a non-Gaussian upper bound, see cited papers for details.

\subsection{Notations and auxiliary results}
\label{notations_sect}
Let $L^p=L^p(\mathbb R^d)$, $p\geq 1$ denote the standard Lebesgue space with norm $\|\cdot\|_p$, $W^{1,p}=W^{1,p}(\mathbb R^d)$ the Sobolev spaces, $\mathcal S'=\mathcal S'(\mathbb R^d)$ the space of Schwartz distributions.

Let $\mathcal B(X,Y)$ be the space of bounded linear operators between Banach spaces $X \rightarrow Y$  with operator norm $\|\cdot\|_{X \rightarrow Y}$. Let $\mathcal B(X):=\mathcal B(X,X)$. Set $\|\cdot\|_{p \rightarrow q}:=\|\cdot\|_{L^p \rightarrow L^q}$. 

Given a $d \times d$ matrix $P=(P_{ij})_{i,j=1}^d$ with entries in $X$, we set $\|P\|_X:=\bigl(\sum_{i,j=1}^d \|P_{ij}\|_X^2\bigr)^{\frac{1}{2}}$.

Put$$
\langle f\rangle:=\int_{\mathbb R^d}fdx, \quad \langle f,g\rangle:=\langle f\bar{g}\rangle.
$$

The following class is well known:

\begin{definition*}
A vector field $b:\mathbb R_+ \rightarrow [\mathcal S']^d$ with ${\rm div\,}b=0$ is said to be in the class $L^\infty\,{\rm BMO}^{-1}$ if
\begin{equation}
\label{b_repr}
b_k(t)=\sum_{i=1}^d \nabla_i B_{ik}(t), \quad t \in \mathbb R_+,
\end{equation}
for some skew-symmetric matrix $B$ with entries $B_{ik} \in L^\infty {\rm BMO} \equiv L^\infty(\mathbb R_+,{\rm BMO})$.
\end{definition*}

The class $L^\infty\, {\rm BMO}^{-1}$ is endowed with semi-norm
$$
\|b\|_{L^\infty\, {\rm BMO}^{-1}} = \|B\|_{L^\infty\, {\rm BMO}} :=\sup_{t \in \mathbb R_+}\|B(t)\|_{\rm BMO}.
$$
Here ${\rm BMO}$ is the space of functions of bounded mean oscillation on $\mathbb R^d$; recall that a function $F \in L^1_{\loc}(\mathbb R^d)$ is of bounded mean oscillation if 
$$
\|F\|_{\rm BMO}:= \sup_{Q}\frac{1}{|Q|}\int_{Q}|F-\bar{F}|dx<\infty, \quad \text{where }\bar{F}:=\frac{1}{|Q|}\int_Q F dx.
$$
with the supremum taken over all cubes $Q \subset \mathbb R^d$ with sides parallel to the axes, $|Q|$ is the volume of $Q$.

If $b$ is time independent, then we write simply $b \in {\rm BMO}^{-1}$. (Occasionally, we will be adding ``${\rm div\,}b=0$'' to make the paper easier to follow although this is redundant.)

As it was
 demonstrated in \cite{QX}, the functions $B_{ik}$ can always be modified to be in $L^q_{\loc}(\mathbb R_+ \times \mathbb R^d)$ for all $1 \leq q<\infty$ (by adding functions that only depend on $t$). We assume in what follows that this modification has been made.

\begin{proposition}[{\cite[Theorem 4]{CLMS}}]
\label{lem_bmo_est}
Let $b \in {\rm BMO}^{-1}$, ${\rm div\,}b=0$.
Then, for all $u,v \in W^{1,2}$, $$|\langle b \cdot\nabla u,v\rangle|\leq \|B\|_{\rm BMO}\|\nabla u\|_2\|\nabla v\|_2.$$
\end{proposition}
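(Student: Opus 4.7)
The target identity concerns a bilinear form on $W^{1,2}$, and the proposal is to reduce it to the $H^1$--BMO duality pairing of Fefferman-Stein via a div-curl (null-form) cancellation coming from the skew-symmetry of $B$.

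\textbf{Step 1 (integration by parts).} First, for $u,v \in C_c^\infty(\mathbb R^d)$ I would use the representation $b_k = \sum_i \nabla_i B_{ik}$ with $B_{ik} \in L^q_{\loc}$ (for every $q<\infty$, by the modification mentioned before the Proposition) to write
\begin{equation*}
\langle b\cdot\nabla u,v\rangle = \sum_{i,k}\int (\nabla_i B_{ik})(\nabla_k u)v\,dx = -\sum_{i,k}\int B_{ik}(\nabla_i\nabla_k u)v\,dx - \sum_{i,k}\int B_{ik}(\nabla_k u)(\nabla_i v)\,dx.
\end{equation*}
The first term on the right vanishes because the Hessian $(\nabla_i\nabla_k u)$ is symmetric in $(i,k)$ while $B$ is skew-symmetric. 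Then density of $C_c^\infty$ in $W^{1,2}$ (combined with the $L^q_{\loc}$ representative of $B$) extends the identity to $u,v\in W^{1,2}$.

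\textbf{Step 2 (antisymmetrization and div-curl).} Using skew-symmetry of $B$ once more, I would replace the remaining term by its antisymmetric part:
\begin{equation*}
\sum_{i,k}B_{ik}(\nabla_k u)(\nabla_i v) = \frac{1}{2}\sum_{i,k}B_{ik}J_{ki}(u,v), \qquad J_{ki}(u,v):=(\nabla_k u)(\nabla_i v)-(\nabla_i u)(\nabla_k v).
\end{equation*}
The key structural observation is that $J_{ki}(u,v) = F_{ki}\cdot\nabla v$ where $F_{ki}:=(\delta_{jk}\nabla_i u - \delta_{ji}\nabla_k u)_{j=1}^d$ satisfies $\mydiv F_{ki}=0$, while $\nabla v$ is curl-free. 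This places $J_{ki}(u,v)$ into the setting of the Coifman-Lions-Meyer-Semmes div-curl lemma, which yields $J_{ki}(u,v) \in \mathcal H^1(\mathbb R^d)$ with
\begin{equation*}
\|J_{ki}(u,v)\|_{\mathcal H^1} \leq C\|\nabla u\|_2\|\nabla v\|_2.
\end{equation*}

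\textbf{Step 3 (duality and summation).} By the Fefferman-Stein $\mathcal H^1$--BMO duality,
\begin{equation*}
\Bigl|\int B_{ik}J_{ki}(u,v)dx\Bigr| \leq \|B_{ik}\|_{\rm BMO}\|J_{ki}(u,v)\|_{\mathcal H^1}.
\end{equation*}
Summing over $i,k$ and using the matrix norm $\|B\|_{\rm BMO}=\bigl(\sum_{i,k}\|B_{ik}\|_{\rm BMO}^2\bigr)^{1/2}$ defined in the notation section, together with Cauchy-Schwarz over the $d^2$ terms, produces the desired bound $\|B\|_{\rm BMO}\|\nabla u\|_2\|\nabla v\|_2$. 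The sharpness of the constant (no extra absolute factor) is precisely the content of Theorem 4 of \cite{CLMS}; tracking it requires the refined form of the div-curl estimate in that paper.

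\textbf{Main obstacle.} The conceptually nontrivial part is the $\mathcal H^1$ regularity of the null-form $J_{ki}(u,v)$: pointwise this is merely an $L^1$ product of two $L^2$ gradients, and the improvement to $\mathcal H^1$ is exactly where cancellation enters. Invoking CLMS takes care of this. A secondary, purely technical, obstacle is the integration by parts in Step 1: since $B_{ik}$ is only in BMO, a direct integration by parts in $\mathcal S'$ is delicate, and one must first truncate/mollify $B$ and pass to the limit using the weak-$*$ convergence of BMO along with uniform BMO bounds on the mollifications.
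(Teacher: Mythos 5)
Your argument is correct and is essentially the proof behind the result the paper merely cites: the paper gives no proof of Proposition \ref{lem_bmo_est}, attributing it to \cite[Theorem 4]{CLMS}, and your route (integration by parts, killing the Hessian term by skew-symmetry of $B$, recognizing the antisymmetrized product as a div-curl null form in $\mathcal H_1$, then $\mathcal H_1$--${\rm BMO}$ duality) is exactly the standard argument underlying that citation, and it is the same circle of ideas the paper itself uses later (proof of Proposition \ref{emb_prop} via Proposition \ref{qian_xi_prop}). Two minor points: $F_{ki}\cdot\nabla v=-J_{ki}(u,v)$ (a harmless sign), and the duality together with the div-curl estimate yields the bound only up to a dimensional constant rather than with constant $1$ as literally stated, which you correctly flag and which is immaterial for how the proposition is used in the paper.
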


\begin{proposition} [{\cite[Prop.\,3.2]{QX}}]
\label{qian_xi_prop}
Given a $f\in W^{1,2}$, one has $|f|\nabla_i|f|\in \mathcal H_1$, where $\mathcal H_1$ is the real Hardy space, and
$$
\||f|\nabla_i |f|\|_{\mathcal H_1} \leq C\|\nabla f\|_2\|f\|_2, \quad C=C(d).
$$
\end{proposition}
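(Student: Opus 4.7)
The plan is to establish the claim via the div-curl characterization of the real Hardy space $\mathcal H_1$ due to Coifman-Lions-Meyer-Semmes. First I would reduce to the real case: since $|f|\nabla_i|f|=\operatorname{Re}(\bar f\,\nabla_i f)$ and the chain rule gives $|f|\in W^{1,2}$ with $\|\nabla|f|\|_2\leq\|\nabla f\|_2$, it suffices (replacing $f$ by $|f|$) to prove that for real $f\geq 0$ in $W^{1,2}$ one has $f\,\nabla_i f = \tfrac12 \nabla_i(f^2)\in \mathcal H_1$ with norm $\lesssim\|f\|_2\|\nabla f\|_2$.

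Next, I would exhibit the div-curl structure: write $f\,\nabla_i f = E\cdot B$ with $E:=\nabla f$ (curl-free, in $L^2$) and $B:= f e_i$ (in $L^2$, but with $\operatorname{div} B=\nabla_i f\neq 0$ in general). Perform a Leray-Helmholtz decomposition $B=B_{\mathrm{df}}+\nabla\phi$, where $B_{\mathrm{df}}$ is divergence-free and $\phi=\Delta^{-1}\nabla_i f$; both summands are controlled in $L^2$ by $C\|f\|_2$ via $L^2$-boundedness of the Leray projection. The CLMS div-curl lemma applied to the pairing of curl-free $E\in L^2$ with divergence-free $B_{\mathrm{df}}\in L^2$ yields $\|E\cdot B_{\mathrm{df}}\|_{\mathcal H_1}\lesssim\|\nabla f\|_2\,\|f\|_2$. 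For the residual $E\cdot\nabla\phi=\nabla f\cdot\nabla\phi$, observe that $\nabla\phi=\nabla\Delta^{-1}\nabla_i f$ is itself a composition of Riesz transforms applied to $f$; rewriting one of the two factors as $(-\Delta)^{1/2}$ of a Riesz transform of $f$, one can hope either to reduce to a second instance of CLMS (after a further Leray decomposition of one factor) or to appeal to a paraproduct estimate, in order to upgrade this piece to $\mathcal H_1$ with the same norm.

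I expect the residual term $\nabla f\cdot\nabla\phi$ to be the main obstacle: it sits in $L^1$ with the correct bound by Cauchy-Schwarz, but promoting this to $\mathcal H_1$ requires extracting the compensated-compactness cancellation hidden in the fact that both factors are Riesz-transform expressions built from the single function $f$. A potentially cleaner alternative that avoids Step 4 altogether is to construct an atomic decomposition of $\nabla_i(f^2)$ directly, exploiting that $f^2\geq 0$ and the identity $|\nabla(f^2)|=2f|\nabla f|$: partition $\mathbb R^d$ by the dyadic level sets $\{2^k\leq f<2^{k+1}\}$, verify the atom size and cancellation conditions on each level (after subtracting averages on a Whitney cover), and sum by Cauchy-Schwarz over levels. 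Either route has to extract the same extra cancellation present in $f\,\nabla_i f$ beyond what a generic element of $L^1$ enjoys, and it is precisely that cancellation that the proposition asserts.
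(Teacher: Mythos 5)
Your reduction to real nonnegative $f$ and the CLMS treatment of the divergence-free part are fine, but the argument does not close, and the gap sits exactly where the content of the proposition is. By construction the residual is $\nabla f\cdot\nabla\phi \,=\, f\,\nabla_i f-\nabla f\cdot B_{\mathrm{df}}$, so once CLMS has disposed of $\nabla f\cdot B_{\mathrm{df}}$, the claim ``$\nabla f\cdot\nabla\phi\in\mathcal H_1$ with norm $\lesssim\|f\|_2\|\nabla f\|_2$'' is \emph{equivalent} to the proposition itself; any attempt to prove it by ``a second instance of CLMS after a further Leray decomposition of one factor'' is circular or vacuous. Indeed both factors $\nabla f$ and $\nabla\phi$ are pure gradients, so their divergence-free parts are zero and no new div--curl pairing is available; and if you instead write $\partial_j f=R_j h$ with $h=(-\Delta)^{1/2}f$, $\partial_j\phi=-R_jR_i f$, the only divergence-free field at hand is $(\delta_{ji}f+R_jR_if)_j=B_{\mathrm{df}}$, and pairing it with $\nabla f$ merely reproduces the identity above. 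The paraproduct and atomic-decomposition routes are named but not carried out (and the atomic sketch is far from routine: producing the cancellation of the atoms is precisely the difficulty). So the proposal is a plan with an acknowledged hole at the decisive cancellation step, not a proof. Note also that the present paper does not prove this statement; it quotes it from \cite[Prop.\,3.2]{QX}.

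What actually works, and what the cited proof does, is a direct maximal-function argument that needs no decomposition of $f e_i$ at all. Write $|f|\nabla_i|f|=\tfrac12\nabla_i(|f|^2)$, fix $\phi\in C_c^\infty(B(0,1))$ with $\langle\phi\rangle=1$, and integrate by parts:
\begin{equation*}
\phi_t\ast\bigl(|f|\nabla_i|f|\bigr)(x)=\frac{1}{2t}\int (\nabla_i\phi)_t(x-y)\bigl(|f(y)|^2-m_{x,t}^2\bigr)\,dy,
\qquad m_{x,t}:=\frac{1}{|B(x,t)|}\int_{B(x,t)}|f|,
\end{equation*}
the subtraction being free because $\langle\nabla_i\phi\rangle=0$. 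Factoring $|f|^2-m^2=(|f|-m)(|f|+m)$, applying H\"{o}lder with exponents $q,q'$ (with $\frac{2d}{d+1}\le q<2$) and the Sobolev--Poincar\'{e} inequality to the factor $|f|-m$, one gets the pointwise bound $\sup_{t>0}\bigl|\phi_t\ast(|f|\nabla_i|f|)(x)\bigr|\le C\bigl(M(|\nabla|f||^q)(x)\bigr)^{1/q}\bigl(M(|f|^q)(x)\bigr)^{1/q}$, where $M$ is the Hardy--Littlewood maximal operator. Both factors lie in $L^2$ (since $2/q>1$), so Cauchy--Schwarz gives the $\mathcal H_1$ bound $C\|\nabla f\|_2\|f\|_2$ via the maximal-function characterization of $\mathcal H_1$. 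This is the ingredient your residual term is missing: the cancellation is extracted from $\langle\nabla_i\phi\rangle=0$ together with Poincar\'{e}, not from a div--curl structure.
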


 \begin{definition*}
${\rm div\,}b \in L^1_{\loc}(\mathbb R_+ \times \mathbb R^d)$ is said to be form-bounded (with form-bound $\nu<\infty$) if
\begin{equation}
\label{fbd_potential}
\big\||{\rm div\,}b(t)|^{\frac{1}{2}} \psi\big\|_2^2   \leq \nu \|\nabla \psi\|^2_2  + h(t)\|\psi\|_2^2 \quad \forall \psi \in W^{1,2}
\end{equation}
for a.e.\,$t \in \mathbb R_+$,
for some function $0 \leq h  \in L^1_{\loc}(\mathbb R_+)$.
\end{definition*}

\bigskip

\section{Main results}

\medskip

We start with the basic results of the well-posedness of Cauchy problem for equations \eqref{eq0} and \eqref{eq1} without any assumptions on ${\rm div\,}b$.

Instead of \eqref{eq0}, it will be convenient to work with equation
\begin{equation}
\label{eq0_}
(\partial_t + \lambda- \Delta + b \cdot \nabla)u=0,
\end{equation}
where $\lambda$ is from the condition $b \in L^\infty\,\mathbf{F}_\delta^{\scriptscriptstyle 1/2}.$
In this regard, we introduce the 
scale of Bessel potential spaces $\mathcal W^{\alpha,2}$ endowed with the norm $$\|v\|_{\mathcal W^{\alpha,2}}:=\|(\lambda -\Delta)^\frac{\alpha}{2}v\|_2.$$ 

\medskip

Assertion (\textit{ii}) in theorem below in the case case $b \in L^\infty\,{\rm BMO}^{-1}$ is due to \cite[Theorem 5.2]{QX}.  We included it for the sake of completeness. Regarding the elliptic setting, see \cite[Theorem 3.1]{Zh}.

\begin{theorem}
\label{thm0}
Let $d \geq 3$, $T>0$. The following is true:

{\rm(\textit{i})} Let $b$ be weakly form-bounded: $$b \in L^\infty\,\mathbf{F}_\delta^{\scriptscriptstyle 1/2} \quad \text{ with }\delta<1 \quad \text{\rm (see \eqref{wfbd})}.$$
Then for every $f \in \mathcal W^{\frac{1}{2},2}$ there exists a unique weak solution to Cauchy problem for \eqref{eq0_} with initial condition $u(s+)=f$, i.e.\,a unique
 in $L^\infty_{\loc}(]s,T[,\mathcal W^{\frac{1}{2},2}) \cap L^2_{\loc}(]s,T[,\mathcal W^{\frac{3}{2},2})$ function $u$  satisfying 
\begin{align}
 \int_{s}^{T} \langle (\lambda-\Delta)^{\frac{1}{4}}u,\partial_t (\lambda-\Delta)^{\frac{1}{4}}\varphi \rangle dt & = \int_{s}^{T} \langle (\lambda-\Delta)^{\frac{3}{4}}u,(\lambda-\Delta)^{\frac{3}{4}}\varphi \rangle dt  \notag
\\
& + \int_{s}^{T} \langle b(t) \cdot \nabla u,(\lambda-\Delta)^{\frac{1}{2}}\varphi\rangle \label{weak_sol}
\end{align}
for all $\varphi \in C_c^\infty(]s,T[,\mathcal S)$ and
\begin{align}
\label{cp}
w\mbox{-}{\mathcal W^{\frac{1}{2},2}}\mbox{-}\lim_{t \downarrow s}u(t)=f.
\end{align}
 Furthermore, $u \in C([s,T],\mathcal W^{\frac{1}{2},2})$, and the following energy inequality holds:
\begin{equation}
\label{ei2}
\|u(t)\|_{\mathcal W^{\frac{1}{2},2}}^2 + 2(1-\delta)\int_{s}^{t} \|u(\tau)\|^2_{\mathcal W^{\frac{3}{2},2}}d\tau \leq  \|f\|_{\mathcal W^{\frac{1}{2},2}}^2, \quad 0 \leq s < t \leq T.
\end{equation}
The operators $T^{t,s}f(x):=u(t,s,x)$ constitute a contraction strongly continuous Markov evolution family in $\mathcal W^{\frac{1}{2},2}$.
If $\{b_\varepsilon\}_{\varepsilon>0}$ is a family of bounded smooth vector fields such that $b_\varepsilon \in L^\infty\,\mathbf{F}_\delta^{\scriptscriptstyle 1/2}$ with the same $\lambda$ as $b$, 
$
b_\varepsilon \rightarrow b$ in $[L^1_{\loc}(\mathbb R_+ \times \mathbb R^d)]^d$ as $\varepsilon \to 0
$
(see example of such vector fields in Proposition \ref{fbd_approx_prop}), and
if $u_\varepsilon$ denotes the  solution to Cauchy problem \eqref{eq0_}, \eqref{cp} with the vector field $b_\varepsilon$, then
$$
u_\varepsilon \rightarrow u \quad \text{weakly in } L^2([s,T],\mathcal W^{\frac{3}{2},2}) \text{ as } \varepsilon \to 0.
$$

\medskip

{\rm(\textit{ii})} If either ($|b| \in L^2_{\loc}(\mathbb R_+ \times \mathbb R^d)$, ${\rm div\,}b$ is form-bounded with form-bound $\nu<1$) or   ($b \in L^\infty\,{\rm BMO}^{-1}$, ${\rm div\,}b=0$), then for every $f \in L^2$ there exists a unique weak solution $u$ to the corresponding Cauchy problem for \eqref{eq1} (in the standard triple $W^{1,2} \subset L^2 \subset W^{-1,2}$), which satisfies the classical energy inequality
\begin{equation}
\label{ei0_bmo}
\|u(t)\|_{2}^2 + c\int_{s}^{t} \|\nabla u(\tau)\|^2_{2}d\tau \leq  \|f\|_{2}^2, \quad c>0, \quad 0 \leq s<t.
\end{equation}

\end{theorem}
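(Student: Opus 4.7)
The plan for \textit{(i)} is to apply the abstract theorem of J.-L.~Lions in the non-standard Gelfand triple $\mathcal W^{\frac{3}{2},2}\hookrightarrow\mathcal W^{\frac{1}{2},2}\hookrightarrow\mathcal W^{-\frac{1}{2},2}$, identifying \eqref{eq0_} with the time-dependent bilinear form
\begin{equation*}
a(t;u,v) := \langle (\lambda-\Delta)^{\frac{3}{4}} u,(\lambda-\Delta)^{\frac{3}{4}} v\rangle + \langle b(t)\cdot\nabla u,(\lambda-\Delta)^{\frac{1}{2}} v\rangle
\end{equation*}
on $\mathcal W^{\frac{3}{2},2}\times\mathcal W^{\frac{3}{2},2}$; this form arises precisely from pairing \eqref{eq0_} with $(\lambda-\Delta)^{\frac{1}{2}}\varphi$. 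Once $a$ is shown continuous and coercive on $\mathcal W^{\frac{3}{2},2}$ (relative to the pivot $\mathcal W^{\frac{1}{2},2}$) with constants uniform in $t$, Lions' theorem yields a unique $u\in L^2([s,T],\mathcal W^{\frac{3}{2},2})$ with $\partial_t u\in L^2([s,T],\mathcal W^{-\frac{1}{2},2})$, hence $u\in C([s,T],\mathcal W^{\frac{1}{2},2})$, satisfying \eqref{weak_sol}, the initial trace \eqref{cp}, and the energy inequality \eqref{ei2}. Contractivity of $T^{t,s}$ and the Markov property then follow from linearity and uniqueness.

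The main technical point --- and the only non-routine step --- is the estimate
\begin{equation*}
|\langle b(t)\cdot\nabla u,(\lambda-\Delta)^{\frac{1}{2}} v\rangle|\leq \delta\,\|u\|_{\mathcal W^{\frac{3}{2},2}}\|v\|_{\mathcal W^{\frac{3}{2},2}},
\end{equation*}
which gives both continuity and the coercivity bound $a(t;u,u)\geq(1-\delta)\|u\|_{\mathcal W^{\frac{3}{2},2}}^2$. The natural-looking route through $\||b|^{\frac{1}{2}}|\nabla u|\|_2\leq \sqrt{\delta}\,\|(\lambda-\Delta)^{\frac{1}{4}}|\nabla u|\|_2$ fails because $(\lambda-\Delta)^{\frac{1}{4}}$ does not commute with the absolute value and the resulting norm is not controlled by $\|u\|_{\mathcal W^{\frac{3}{2},2}}$. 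Instead, I would factor $b_k=\mathrm{sgn}(b_k)\,|b_k|^{\frac{1}{2}}\cdot|b_k|^{\frac{1}{2}}$, apply Cauchy-Schwarz componentwise, and then invoke \eqref{wfbd} on each \emph{scalar} function $\nabla_k u$ and on $(\lambda-\Delta)^{\frac{1}{2}} v$ separately; the commutation $\nabla_k(\lambda-\Delta)^{\frac{1}{4}}=(\lambda-\Delta)^{\frac{1}{4}}\nabla_k$ on $\mathcal S$ together with $\|(\lambda-\Delta)^{\frac{1}{4}}\nabla u\|_2^2 = \|u\|_{\mathcal W^{\frac{3}{2},2}}^2 - \lambda\|u\|_{\mathcal W^{\frac{1}{2},2}}^2$ then delivers the displayed bound with no loss in $\delta$, which is what makes $\delta<1$ the sharp threshold.

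For the approximation assertion in \textit{(i)}, the uniform bounds in \eqref{ei2} extract (along a subsequence) weak limits of $u_\varepsilon$ in $L^2([s,T],\mathcal W^{\frac{3}{2},2})$ and weak-$\ast$ limits in $L^\infty([s,T],\mathcal W^{\frac{1}{2},2})$; the drift term in \eqref{weak_sol} passes to the limit by combining $b_\varepsilon\to b$ in $L^1_{\loc}$ with strong $L^2_{\loc}$-convergence of $u_\varepsilon$ obtained via an Aubin-Lions argument, and uniqueness identifies the limit as $u$, upgrading convergence to the full family. For part \textit{(ii)}, the same machinery runs in the standard triple $W^{1,2}\subset L^2\subset W^{-1,2}$. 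In the form-bounded-divergence case, integration by parts $\langle b\cdot\nabla u,u\rangle=-\frac{1}{2}\langle\mathrm{div}\,b,u^2\rangle$ together with \eqref{fbd_potential} and $\nu<1$ yields the classical energy inequality \eqref{ei0_bmo}. In the $L^\infty\,\mathrm{BMO}^{-1}$ case, the skew-symmetric representation $b_k=\nabla_iB_{ik}$ produces, rigorously via mollification and the Hardy-$\mathrm{BMO}$ duality exploited through Propositions~\ref{lem_bmo_est} and \ref{qian_xi_prop}, the identity $\langle b\cdot\nabla u,u\rangle=-\langle B_{ik}\nabla_k u,\nabla_i u\rangle=0$ by antisymmetry, again giving \eqref{ei0_bmo}; continuity of the bilinear form on $W^{1,2}$ is supplied directly by Proposition~\ref{lem_bmo_est}.
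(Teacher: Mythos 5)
Your existence argument for (\textit{i}) takes a genuinely different route from the paper: you apply the abstract Lions theorem directly in the triple $\mathcal W^{\frac{3}{2},2}\subset\mathcal W^{\frac{1}{2},2}\subset\mathcal W^{-\frac{1}{2},2}$ to the form $a(t;u,v)$, whereas the paper constructs $u$ by approximating $b$ with bounded smooth $b_n$ preserving $\delta$ and $\lambda$ (Proposition \ref{fbd_approx_prop}), passing to the weak limit, and uses the Lions--Magenes machinery only through the a priori regularity statement Proposition \ref{prop1}. Your key drift bound is exactly the paper's Proposition \ref{lem_fbd_est} (componentwise Cauchy--Schwarz with $|b|^{\frac12}$ and $\|(\lambda-\Delta)^{\frac14}\nabla u\|_2\le\|u\|_{\mathcal W^{\frac{3}{2},2}}$), and that part is sound. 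However, there are genuine gaps. First, the Markov property does \emph{not} ``follow from linearity and uniqueness'': positivity preservation and the $L^\infty$-contraction property are not consequences of the variational construction in this fractional-order triple (truncations such as $u\wedge 1$, $u\vee 0$ are not admissible in $\mathcal W^{\frac{3}{2},2}$, so no Stampacchia-type argument is available). The paper obtains Markovianity precisely from the approximation: the solutions $u_n$ for bounded smooth $b_n$ are positivity preserving $L^\infty$-contractions by the classical maximum principle, and these properties survive the weak limit \eqref{un_conv}. Since you invoke smooth approximations only afterwards, and with an incorrect justification for this point, this part of the statement is unproved in your proposal.

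Second, the theorem asserts uniqueness in the class $L^\infty_{\loc}(]s,T[,\mathcal W^{\frac{1}{2},2})\cap L^2_{\loc}(]s,T[,\mathcal W^{\frac{3}{2},2})$ of functions satisfying \eqref{weak_sol} with only the \emph{weak} initial condition \eqref{cp}, while Lions' theorem gives uniqueness only within the variational class ($u\in L^2([s,T],\mathcal W^{\frac{3}{2},2})$, $\partial_t u\in L^2([s,T],\mathcal W^{-\frac{1}{2},2})$, data attained strongly). To bridge this you must show that an arbitrary weak solution in the local class automatically has $\partial_t u\in L^2_{\loc}(\mathcal W^{-\frac{1}{2},2})$, belongs to $C(]s,T],\mathcal W^{\frac{1}{2},2})$ and has non-increasing $\mathcal W^{\frac{1}{2},2}$-norm, and then combine this monotonicity with \eqref{cp} to upgrade to strong attainment of $f$ and the energy inequality from $t=s$; this is exactly the content of the paper's Proposition \ref{prop1} and Step 4, and it is absent from your argument. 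Two smaller points: in the approximation assertion, the convergence of $\int_s^T\langle (b_\varepsilon-b)\cdot\nabla u_\varepsilon,(\lambda-\Delta)^{\frac12}\varphi\rangle\,dt$ does not follow merely from $b_\varepsilon\to b$ in $L^1_{\loc}$ plus strong $L^2_{\loc}$ convergence of $u_\varepsilon$, since $b$ is unbounded and only $|b-b_\varepsilon|^{\frac12}$ is controlled; one needs the uniform weak form-bound of $b-b_\varepsilon$ together with a spatial cutoff as in \eqref{I2}--\eqref{I3}. And in (\textit{ii}), for the case ($|b|\in L^2_{\loc}$, ${\rm div\,}b$ form-bounded) the form $\langle b\cdot\nabla u,v\rangle$ is not continuous on $W^{1,2}\times W^{1,2}$ under the stated hypotheses, so ``the same machinery'' does not literally run there; the paper defers that case to the cited references.
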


The notion of weak solution to equation \eqref{eq0_} in assertion (\textit{i})
is obtained by formally multiplying \eqref{eq0_} by test function $(\lambda-\Delta)^{\frac{1}{2}}\varphi$ and integrating.
The last term in \eqref{weak_sol} is well defined by Proposition \ref{lem_fbd_est}.

 The proof of Theorem \ref{thm0}(\textit{i}) uses the Lions variational approach in the triple of Bessel potential spaces $\mathcal W^{\frac{3}{2},2} \subset \mathcal W^{\frac{1}{2},2} \subset \mathcal W^{-\frac{1}{2},2}.$
 Theorem \ref{thm0}(\textit{i}) can be viewed as the first step towards a regularity theory of \eqref{eq0} with weakly form-bounded $b$. In the time-homogeneous case $b=b(x)$, the class \eqref{wfbd} provides sharp $L^p \rightarrow L^q$ bounds on the corresponding to \eqref{eq0} semigroup \cite{S}, a detailed Sobolev regularity theory of elliptic operator $-\Delta + b \cdot \nabla$ in $L^p$ for $p$ large and the corresponding Feller semigroup \cite{Ki}, see discussion in Section \ref{comm_sect}. The latter determines, for every initial point $x \in \mathbb R^d$, a ``sequentially unique'' weak solution to the SDE 
\begin{equation}
\label{sde}
X_t=x - \int_0^t b(X_s)ds + \sqrt{2}B_t, \quad t \geq 0,
\end{equation}
where $B_t$ is the standard $d$-dimensional Brownian motion, $x \in \mathbb R^d$ is the initial point, see \cite{KiS_BM}. (See, however, recent developments in \cite{Ki2} regarding time-inhomogeneous $b$.)

\begin{remark}
\label{rem_add_g}
One can extend the definition of the class $L^\infty\,\mathbf{F}_\delta^{\scriptscriptstyle 1/2}$ by considering $b \in [L^1_{\loc}(\mathbb R_+ \times \mathbb R^d)]^d$ such that
\begin{equation}
\label{weak_fbd_g}
\||b(t)|^{\frac{1}{2}}\psi\|_2^2 \leq  \delta \|(\lambda-\Delta)^{\frac{1}{4}}\psi\|_2^2 + g(t)\|\psi\|_2^2, \quad \psi \in (\lambda-\Delta)^{-\frac{1}{4}}L^2,
\end{equation}
where  $0 \leq g \in L^2_{\loc}(\mathbb R_+)$, see Appendix \ref{appg} for details. 
\end{remark}

Next, we turn to the question of what assumptions on locally unbounded $b$ provide upper and lower Gaussian bounds on the heat kernel of  equation \eqref{eq1}. Compared to the previous theorem, we will weaken the assumption on the vector field $b$ even further to $b \in \mathbf{M}_\delta$, in particular taking into account  possible cancellations, but requiring the existence of ${\rm div\,}b \in L^1_{\loc}(\mathbb R_+ \times \mathbb R^d)$ and, moreover, the smallness of the Kato norms of its positive and/or negative parts $({\rm div\,}b)_+$,  $({\rm div\,}b)_-$. These assumptions allow $b$ and ${\rm div\,}b$ to be quite singular.

Put
\begin{align*}
k_c(t,x,y) & \equiv k(c t,x,y) :=(4\pi c t)^{-\frac{d}{2}}e^{ - \frac{|x-y|^2}{4c t}}, \quad c>0.
\end{align*}

\begin{theorem}
\label{thm0_}

Let $d \geq 3$. Let $a \in H_{\sigma,\xi}$, $b \in \mathbf{M}_\delta$ with multiplicative bound $\delta<\infty$ and function $g$ satisfying
\begin{equation}
\label{g_cond2}
\int_s^t g(\tau) d\tau \leq c_\delta\sqrt{t-s}, \quad 0 \leq s<t<\infty
\end{equation}
for some constant $c_\delta \geq 0$. Let $\mu_+$ denote the global in time Kato norm of ``potential'' $({\rm div\,}b)_+$, i.e.\,
the maximum between
\begin{equation}
\label{kato}
\sup_{t \geq 0, x \in \mathbb R^d}\int_{0}^{t} \langle k(t-\tau,x,\cdot)({\rm div\,}b)_+(\tau,\cdot)\rangle d\tau, \quad
\sup_{s \geq 0, x \in \mathbb R^d}\int_{s}^{\infty} \langle k(\tau-s,x,\cdot)({\rm div\,}b)_+(\tau,\cdot)\rangle d\tau.
\end{equation}
Also, assume that $a$, $b$, ${\rm div\,}b$ are bounded smooth. 

The following is true.
If $\mu_+$ is smaller than a certain generic constant (i.e.\,a constant that depends only on $d$, $\sigma$, $\xi$, $\delta$, $c_\delta$, but not on the smoothness of $a$ or the boundedness and smoothness of $b$, ${\rm div\,}b$), then the heat kernel $u(t,x;s,y)$ of equation \eqref{eq1} satisfies
a global in time upper Gaussian bound
\[
u(t,x;s,y)\leq c_3 k_{c_4}(t-s;x-y) \quad \text{for all $0 \leq s<t<\infty$, $x,y \in \mathbb R^d$}
\]
with generic constants  $c_i$, $i=3,4$ that can also depend on $\mu_+$.
\end{theorem}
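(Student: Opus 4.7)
The plan is to combine the Davies exponential-weight perturbation method with Moser iteration. Under the a priori smoothness assumption the heat kernel $u(t,x;s,y)$ exists classically, so to obtain the Gaussian factor it is standard to reduce to an on-diagonal/$L^1\to L^\infty$ bound for a twisted semigroup. Fix a smooth bounded $h:\mathbb R^d\to\mathbb R$ with $\|\nabla h\|_\infty\leq 1$ and a parameter $\alpha\geq 0$, and set $v:=e^{\alpha h}u\geq 0$ (we may assume $u\geq 0$ by positivity of the kernel). A direct computation gives
\begin{equation*}
\partial_t v=\nabla\cdot(a\nabla v)-\alpha\,\nabla\cdot(v\,a\nabla h)-\alpha\, a\nabla h\cdot\nabla v+\alpha^2(a\nabla h\cdot\nabla h)v-b\cdot\nabla v+\alpha(b\cdot\nabla h)v.
\end{equation*}
It then suffices to prove $\|v(t)\|_\infty\leq C(t-s)^{-d/2}e^{c\alpha^2(t-s)}\|v(s)\|_1$ uniformly in such $h$; optimizing with $h(z)\sim (x-y)\cdot z/|x-y|$ and $\alpha\sim|x-y|/(t-s)$ produces the Gaussian factor by the standard argument.

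Testing against $v^{p-1}$ and integrating, the good term $-\tfrac{4(p-1)\sigma}{p^2}\|\nabla v^{p/2}\|_2^2$ from the Dirichlet part must swallow the following contributions: the terms linear in $\alpha\nabla h$ (absorbable via Cauchy--Schwarz at the cost of an $O(\alpha^2\xi)\|v^{p/2}\|_2^2$ remainder); the zeroth-order $\alpha^2(a\nabla h\cdot\nabla h)v$ (bounded by $\alpha^2\xi\|v^{p/2}\|_2^2$); the divergence of the drift, which after integration by parts equals $\tfrac{1}{p}\langle{\rm div}\,b,v^p\rangle\leq\tfrac{1}{p}\langle({\rm div}\,b)_+,v^p\rangle$ and is absorbed by the smallness of the Kato norm $\mu_+$; and finally the new drift--weight coupling $\alpha\int(b\cdot\nabla h)v^p$, which I control by $\mathbf{M}_\delta$ applied with $\psi=v^{p/2}$ (using $\|\nabla h\|_\infty\leq 1$) to get $\alpha\delta\|\nabla v^{p/2}\|_2\|v^{p/2}\|_2+\alpha g(t)\|v^{p/2}\|_2^2$, the first summand being absorbed once more by AM--GM. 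Altogether,
\begin{equation*}
\frac{d}{dt}\|v\|_p^p\leq -c_1\|\nabla v^{p/2}\|_2^2+c_2(\alpha^2+g(t))\|v\|_p^p,
\end{equation*}
which together with Sobolev $\|v^{p/2}\|_{2d/(d-2)}^2\leq C\|\nabla v^{p/2}\|_2^2$ feeds the standard Moser iteration and yields the required on-diagonal bound after using $\int_s^t g\leq c_\delta\sqrt{t-s}$ from \eqref{g_cond2} to absorb the $g$-contribution into $e^{c\alpha^2(t-s)}$.

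The main obstacle I anticipate is arranging the absorption of $({\rm div}\,b)_+$ uniformly across the Moser scales. Smallness of $\mu_+$ must be converted, via a Khasminskii/Duhamel argument using the heat kernel $k$ appearing in the definition of $\mu_+$, into a form bound $\langle({\rm div}\,b)_+,\varphi^2\rangle\leq\epsilon(\mu_+)\|\nabla\varphi\|_2^2+C(\mu_+)\|\varphi\|_2^2$ with $\epsilon(\mu_+)\downarrow 0$ as $\mu_+\downarrow 0$; the $p^{-1}$ coefficient from integration by parts then matches the $\tfrac{4(p-1)}{p^2}\sim p^{-1}$ coefficient of the Dirichlet term, so absorption is uniform in $p$ precisely when $\epsilon(\mu_+)$ is a small generic multiple of $\sigma$. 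This is the origin of the ``smaller than a generic constant'' hypothesis on $\mu_+$. A secondary subtlety is the use of $\mathbf{M}_\delta$ against a non-constant weight: the class bounds the $\mathbb R^d$-valued quantity $\int b\,\psi^2$ in Euclidean norm, so the estimate of $\int(b\cdot\nabla h)v^p$ goes cleanly when $\nabla h$ is constant (the regime of the final optimization), but for a general smooth bounded $h$ needed to make $e^{\alpha h}$ a well-defined multiplier one must approximate by linear $h$ and pass to the limit, or else pull $\nabla h$ outside the form at the expense of an additional Cauchy--Schwarz.
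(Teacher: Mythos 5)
Your overall skeleton (exponential conjugation plus Moser/Nash iteration, then optimization in $\alpha$) is the same as the paper's, which follows Fabes--Stroock with the linear weights $e^{\alpha\cdot x}$. But there is a genuine gap in the step where you treat $({\rm div\,}b)_+$: you propose to convert smallness of the parabolic Kato norm $\mu_+$ into a form bound $\langle({\rm div\,}b)_+(t)\varphi,\varphi\rangle\leq\epsilon(\mu_+)\|\nabla\varphi\|_2^2+C(\mu_+)\|\varphi\|_2^2$ to be absorbed inside the iteration. For time-inhomogeneous potentials this implication is false: the Kato norm \eqref{kato} is a space--time averaged quantity, while the form bound you need is pointwise in $t$. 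For instance, $({\rm div\,}b)_+(t,x)=M\,w(x)\mathbf 1_{[0,\varepsilon]}(t)$ with $w$ a fixed bounded bump and $\varepsilon M$ small has arbitrarily small $\mu_+$, yet on the time slices $t\in[0,\varepsilon]$ any inequality of the above type forces $C\gtrsim M\|w\|_\infty$, which is not controlled by $\mu_+$; and a time-dependent $C(t)\in L^1$ would enter the final bound through $\exp\bigl(\int_s^t C\bigr)$, destroying the claim that the constants are generic (depending only on $d,\sigma,\xi,\delta,c_\delta,\mu_+$). The paper circumvents this entirely by a different decomposition: it first proves the upper Gaussian bound for the \emph{auxiliary} operator $H^+=-\nabla\cdot a\cdot\nabla+b\cdot\nabla+({\rm div\,}b)_+$, for which testing against $u^{p-1}$ turns \emph{all} divergence contributions into terms of favorable sign ($\frac1{p'}\langle v^2,({\rm div\,}b)_+\rangle+\frac1p\langle v^2,({\rm div\,}b)_-\rangle\ge0$), so no absorption and no hypothesis on ${\rm div\,}b$ is needed at that stage; only afterwards is $-\nabla\cdot a\cdot\nabla+b\cdot\nabla$ recovered from $H^+$ as a perturbation by the potential $-({\rm div\,}b)_+$ via the Duhamel series, and this is the (only) place where smallness of $\mu_+$ is used --- the Kato norm, not a form bound, is exactly the quantity that makes that series converge with generic constants.

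A secondary point: your fallback of ``pulling $\nabla h$ outside the form at the expense of an additional Cauchy--Schwarz'' for the term $\alpha\int(b\cdot\nabla h)v^p$ is not legitimate for $b\in\mathbf M_\delta$, since the class only controls the vector $\langle b(t)\psi,\psi\rangle$ (cancellations need not survive multiplication of $b_k$ by a nonconstant $\partial_k h$), and $b$ may be merely distributional so $|b|$ is unavailable. This is, however, easily repaired exactly as in the paper: use only the linear weights $h(x)=\alpha\cdot x$ (constant $\nabla h$), for which $\mathbf M_\delta$ applies directly and which already suffice for the Gaussian factor after optimizing in $\alpha$.
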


If the Kato norm $\mu_+$ of $ ({\rm div\,}\hat{b})_{+}$ is finite, as in the theorem above, then we say that $ ({\rm div\,}\hat{b})_{+}$ belongs to the Kato class.

\begin{remark}
Regarding the above condition on the smallness of the Kato norm $\mu_+$ of $({\rm div\,}b)_+$, recall that  if $({\rm div\,}b)_+ \in \mathbf{1}_{\{0 \leq t \leq 1\}}L^\infty(\mathbb R_+,L^p)$, $p>\frac{d}{2}$, then $\mu_+$ can be chosen arbitrarily small. There also exist $({\rm div\,} b)_+(x)$ that are not even in $L^p_{\loc}$ for any $p>1$ yet have $\mu_+$ finite (or sufficiently small upon multiplying $b$ by a small constant).
\end{remark}

In the next theorem we assume that $a \in H_{\sigma,\xi}$ and $b$ are only measurable, but $b$ satisfies a more restrictive condition:
\begin{equation}
\label{c1}
b=\tilde{b} + \hat{b},
\end{equation}
 where
\begin{equation}
\label{c2}
\tilde{b} \in L^\infty\,{\rm BMO}^{-1}, \;\;{\rm div\,}\tilde{b}=0,
\end{equation}
\begin{equation}
\label{c3}
\hat{b} \in \mathbf{MF}_{\delta} \text{ with } g \in L^2(\mathbb R_+), 
\end{equation}
\begin{equation}
\label{c4}
({\rm div\,}\hat{b})_{\pm} \in L^1_{\loc}(\mathbb R_+ \times \mathbb R^d) \text{ belong to the Kato class with Kato norms $\mu_\pm$, respectively, and }
\end{equation}
\begin{equation}
\label{c5}
\begin{array}{ll}
|({\rm div\,}\hat{b})_{\pm}| \text{ are form-bounded: }& \big\langle \big|({\rm div\,}\hat{b})_{\pm}(t)\big|\psi,\psi\big\rangle  \leq \nu_\pm \|\nabla \psi\|_2^2 + h_\pm(t)\|\psi\|_2^2, \quad \forall\psi \in W^{1,2}\\
& \text{for a.e.} t \in \mathbb R_+ \text{ for some } 0 \leq h_\pm \in L^1(\mathbb R_+).
\end{array}
\end{equation}

\begin{definition}
Assume that \eqref{c1}-\eqref{c5} hold.
We say that $v \in L^2([s,T],W^{1,2})$ is an approximation solution to Cauchy problem for equation \eqref{eq1} with initial condition 
\begin{equation}
\label{approx_cond}
s\mbox{-}L^2\mbox{-}\lim_{t\downarrow s}v(s)=f \in L^2
\end{equation}
 if
$$
v=w\mbox{-}L^2_{\loc}([s,\infty[,W^{1,2})\mbox{-}\lim_{\varepsilon \downarrow 0}\lim_{\varepsilon_1 \downarrow 0}v_{\varepsilon_1,\varepsilon}, \quad 
$$
where $v_{\varepsilon_1,\varepsilon}$ solve
$$
(\partial_t - \nabla \cdot a_{\varepsilon_1} \cdot \nabla  + b_\varepsilon \cdot \nabla )v_{\varepsilon_1,\varepsilon}=0, \quad v_{\varepsilon_1,\varepsilon}(s)=f
$$
for some bounded smooth
$a_{\varepsilon_1} \in H_{\sigma,\xi}$, $\hat{b}_\varepsilon$ and ${(\rm div\,}\hat{b})_{\pm,\varepsilon} \geq 0$ that have  the same multiplicative bound $\delta$, $g$ and the form-bounds $\nu_\pm$, the Kato norms $\mu_\pm$, respectively, and bounded smooth 
$\tilde{b}_\varepsilon=\nabla B_\varepsilon$ with the property $$\|\tilde{b}_\varepsilon \|_{L^\infty\,{\rm BMO}^{-1}} \leq C\|\tilde{b} \|_{L^\infty\,{\rm BMO}^{-1}}$$ for a constant $C$ independent of $\varepsilon$ and $B_\varepsilon$ bounded smooth skew-symmetric matrices with entries in $L^\infty\,{\rm BMO}$,
such that
\begin{equation}
\label{all_conv}
\begin{array}{rl}
a_{\varepsilon_1} \rightarrow a \quad & \text{  in } \quad [L^2_{\loc}(\mathbb R_+ \times \mathbb R^d)]^{d \times d}, \\
\hat{b}_\varepsilon \rightarrow \hat{b}  \quad & \text{  in } \quad [L^1_{\loc}(\mathbb R_+ \times \mathbb R^d)]^{d}, \\
B_{\varepsilon} \rightarrow B \quad & \text{ in }\quad  [L^2_{\loc}(\mathbb R_+ \times \mathbb R^d)]^{d \times d}, \\
{(\rm div\,}\hat{b})_{\pm,\varepsilon} \rightarrow {(\rm div\,}\hat{b})_\pm \quad & \text{  in } \quad L^1_{\loc}(\mathbb R_+ \times \mathbb R^d)
\end{array}
\end{equation}
as $\varepsilon_1$, $\varepsilon \downarrow 0$.
\end{definition}

It is easy to see that if $v$ is an approximation solution to \eqref{eq1}, then it is also a weak solution to \eqref{eq1}:
\begin{align}
\label{weak_sol_id}
-\int_s^t\langle v,\partial_r \varphi\rangle dr + \int_s^t \big\langle (a + B) \cdot \nabla v,\nabla \varphi \big\rangle dr & - \int_s^t \big\langle \hat{b}v,\nabla \varphi\big\rangle dr  - \int_s^t \big\langle ({\rm div\,}\hat{b})v,\varphi\big\rangle dr=0
\end{align}
for all $\varphi \in C_c^\infty(]s,t[,\mathcal S)$.
Here $\tilde{b}=\nabla B$ where $B$ is skew-symmetric in $L^\infty\,{\rm BMO} \cap L^2_{\loc}(\mathbb R_+ \times \mathbb R^d)$, see Section \ref{notations_sect}.

\begin{definition}
We call a constant generic$\ast$ if it depends only on $$d, \sigma, \xi, \delta, \|g\|_{L^2(\mathbb R_+)}, \nu_\pm, \|h_\pm\|_{L^1(\mathbb R_+)} \text{ and } \|\tilde{b}\|_{L^\infty\,{\rm BMO}^{-1}}.$$
\end{definition}

\begin{theorem}
\label{thm0__}

Let $d \geq 3$, $a \in H_{\sigma,\xi}$. There exist generic$\ast$ constants 
$\mu_\pm^\ast$ such that if \eqref{c1}-\eqref{c5} hold with $$\delta<\infty,  \quad \nu_\pm<\sigma \quad 
\text {and with Kato norms }\mu_\pm < \mu_\pm^\ast,$$ then there exists a H\"{o}lder continuous heat kernel $u(t,x;s,y)$ of equation \eqref{eq1} satisfying:

\smallskip

{\rm(a)} A global in time lower Gaussian bound 
$$
c_1 k_{c_2}(t-s;x-y) \leq u(t,x;s,y)
$$
holds, in addition to the upper bound in Theorem \ref{thm0_}, for all $0 \leq s<t<\infty$, $x$, $y \in \mathbb R^d$,
with generic$\ast$ constants $c_1$-$c_4$ that can also depend on $\mu_\pm$.

\medskip

{\rm (b)} The function $$v(t,x):=\big\langle u(t,x;s,\cdot)f(\cdot)\big\rangle, \quad f \in L^2,$$ is an approximation solution to Cauchy problem for \eqref{eq1}.

\medskip

{\rm (c)} The operators $T^{t,s}f:=\big\langle u(t,x;s,\cdot)f(\cdot)\big\rangle$ determine a quasi bounded strongly continuous Feller evolution family of integral operators in $\mathcal B(X)$, $X=L^p$, $1 \leq p <\infty$ or $X=C_u$. The heat kernel  $u(t,x;s,y)$ is defined as the integral kernel of these operators, possibly after a modification on a measure zero set.

\smallskip

\smallskip

\smallskip

{\rm (d)} If either 
\begin{equation}
\tag{${\rm d}_1$}
f \in L^2, \quad a \in H_{\sigma,\xi}, \quad  b=\tilde{b}
\end{equation}
or 
\begin{equation}
\tag{${\rm d}_2$}
f \in \mathcal W^{\frac{1}{2},2}, \quad a=I, \quad b=\hat{b}=\hat{b}^{(1)}+\hat{b}^{(2)}
\end{equation}
such that, for a.e.\,$t \in \mathbb R_+$,
$$
\||\hat{b}^{(1)}(t)|(\lambda-\Delta)^{-\frac{1}{2}}\|_{2 \rightarrow 2} \leq \sqrt{\delta_1}, \quad \|(\lambda-\Delta)^{-\frac{1}{2}}|\hat{b}^{(2)}(t)|\|_{\infty} \leq \sqrt{\delta_2}
$$
with $\sqrt{\delta_1}+\sqrt{\delta_2}<1$, then an approximation solution to Cauchy problem \eqref{eq1}, \eqref{approx_cond} is unique.
\end{theorem}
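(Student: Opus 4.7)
My plan is to reduce everything to a priori estimates on smooth approximations, pass to the limit to construct the heat kernel, and then treat uniqueness separately. First I would construct the regularizing families $a_{\varepsilon_1}$, $\hat b_\varepsilon$, $({\rm div}\,\hat b)_{\pm,\varepsilon}$, and $\tilde b_\varepsilon=\nabla B_\varepsilon$ demanded in \eqref{all_conv}, using standard mollifications that preserve the multiplicative bound $\delta$, the function $g$, the form-bounds $\nu_\pm$ with the same $h_\pm$, the Kato norms $\mu_\pm$, and the $L^\infty{\rm BMO}^{-1}$ seminorm (for the last one I would mollify the skew-symmetric $B$ in space and time, using boundedness of the Hardy-Littlewood maximal operator on ${\rm BMO}$). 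For each regularized problem the classical theory yields a smooth heat kernel $u_{\varepsilon_1,\varepsilon}$; Theorem \ref{thm0_} applied to the regularizations (after absorbing $\tilde b_\varepsilon$ into $b_\varepsilon$; note $\tilde b_\varepsilon\in\mathbf{MF}_\delta$ with a $\delta$ controlled by $\|\tilde b\|_{L^\infty{\rm BMO}^{-1}}$ via Proposition \ref{emb_prop}) provides a uniform upper Gaussian bound once $\mu_+^\ast$ is chosen small enough, since ${\rm div}\,\tilde b_\varepsilon=0$ and so $({\rm div}\,b_\varepsilon)_+=({\rm div}\,\hat b_\varepsilon)_+$ has the same Kato norm as before regularization.

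The heart of the argument is the a priori lower Gaussian bound on $u_{\varepsilon_1,\varepsilon}$, which I would establish via Nash's $G$-function method, but in the form developed for weakly form-bounded drifts. The obstacle is that under $\hat b\in\mathbf{MF}_\delta$ the bilinear form of $\hat b\cdot\nabla$ is not controlled quadratically by the Dirichlet form, so the classical estimates $-\partial_t G\le C(\nabla G)^2/G$ fail. I would therefore introduce the regularized Nash functions of the form $G_\eta(t,y):=-\int \log(u_{\varepsilon_1,\varepsilon}(t,\cdot;s,y)+\eta)\rho_{t,y}(\cdot)\,dx$ with a carefully chosen Gaussian weight $\rho_{t,y}$, and differentiate in $t$, controlling the $\hat b\cdot\nabla$ term by \eqref{hat_b} with the help of the Kato-norm smallness of $({\rm div}\,\hat b)_-$ (for absorption of $g(t)\|\cdot\|^2$ terms one uses $g\in L^2(\mathbb R_+)$). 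The divergence-free BMO$^{-1}$ part is estimated via Proposition \ref{lem_bmo_est} and the Hardy-space bound of Proposition \ref{qian_xi_prop}, which transforms $\langle B\cdot\nabla u,\nabla u\rangle$ into an $\mathcal H_1$-${\rm BMO}$ duality controlled by $\|\tilde b\|_{L^\infty{\rm BMO}^{-1}}\|\nabla u\|_2^2$. Combining the differential inequality for $G_\eta$ with the already established upper Gaussian bound, and then letting $\eta\downarrow 0$, produces the generic$\ast$ lower bound $c_1 k_{c_2}$ independent of $\varepsilon_1,\varepsilon$; this step is where I expect to expend the most care.

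With uniform two-sided Gaussian bounds on $u_{\varepsilon_1,\varepsilon}$ in hand, a De Giorgi/Moser-type oscillation argument (now legitimate because the $L^p\to L^q$ smoothing bounds follow from the upper Gaussian estimate) gives uniform local Hölder estimates; the Arzelà-Ascoli theorem together with \eqref{all_conv} extracts a Hölder continuous limit $u(t,x;s,y)$ still satisfying both Gaussian bounds, and a standard argument shows the operators $T^{t,s}f=\langle u(t,x;s,\cdot)f(\cdot)\rangle$ form a quasi-bounded strongly continuous Feller family on $L^p$ and $C_u$, yielding (a) and (c). Assertion (b) is immediate by passing to the limit in the weak formulation \eqref{weak_sol_id} on the regularized problems, using that the $L^2_{\loc}([s,\infty[,W^{1,2})$-weak limit exists thanks to the energy inequality obtained by testing the regularized equation with $v_{\varepsilon_1,\varepsilon}$ itself (the BMO$^{-1}$ drift is handled via Proposition \ref{lem_bmo_est}, the $\hat b$-drift via \eqref{hat_b}, and ${\rm div}\,\hat b$ via \eqref{c5}).

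For assertion (d), I would run Lions' variational approach in the appropriate Hilbert triple. Under $({\rm d}_1)$ the triple is the standard $W^{1,2}\subset L^2\subset W^{-1,2}$: the map $u\mapsto \tilde b\cdot\nabla u=\nabla\cdot(Bu)$ is bounded $W^{1,2}\to W^{-1,2}$ because of skew-symmetry and Proposition \ref{lem_bmo_est}, coercivity follows from $\langle\tilde b\cdot\nabla u,u\rangle=0$, and uniqueness of the weak solution in $L^\infty_{\loc}(L^2)\cap L^2_{\loc}(W^{1,2})$ satisfying \eqref{ei0_bmo} is classical; any approximation solution is such a weak solution by (b), so it coincides with the Lions solution. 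Under $({\rm d}_2)$ I would work instead in the non-standard triple $\mathcal W^{3/2,2}\subset\mathcal W^{1/2,2}\subset\mathcal W^{-1/2,2}$ and verify that $b\cdot\nabla=\hat b^{(1)}\cdot\nabla+\hat b^{(2)}\cdot\nabla$ is bounded $\mathcal W^{3/2,2}\to\mathcal W^{-1/2,2}$ via the two factorization hypotheses: writing $\langle\hat b^{(i)}\cdot\nabla u,(\lambda-\Delta)^{1/2}\varphi\rangle$ and using $\||\hat b^{(1)}|(\lambda-\Delta)^{-1/2}\|_{2\to2}\le\sqrt{\delta_1}$ for the first summand and duality together with $\|(\lambda-\Delta)^{-1/2}|\hat b^{(2)}|\|_\infty\le\sqrt{\delta_2}$ for the second gives the bound $(\sqrt{\delta_1}+\sqrt{\delta_2})\|u\|_{\mathcal W^{3/2,2}}\|\varphi\|_{\mathcal W^{3/2,2}}$, so that the coercivity inequality holds with positive constant $1-(\sqrt{\delta_1}+\sqrt{\delta_2})>0$; the resulting unique solution in $L^\infty_{\loc}(\mathcal W^{1/2,2})\cap L^2_{\loc}(\mathcal W^{3/2,2})$ (as in Theorem \ref{thm0}(\textit{i})) again agrees with the approximation solution, completing the proof.
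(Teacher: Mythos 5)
Your overall architecture --- regularize the coefficients while preserving all the quantitative bounds, prove a priori two-sided Gaussian bounds for the smooth problems, extract a H\"{o}lder continuous kernel by Arzel\`{a}--Ascoli and weak compactness, and settle uniqueness by Lions' method in the standard triple for $({\rm d}_1)$ and in $\mathcal W^{\frac{3}{2},2}\subset\mathcal W^{\frac{1}{2},2}\subset\mathcal W^{-\frac{1}{2},2}$ for $({\rm d}_2)$ --- matches the paper's. The genuine gap is in what you yourself call the heart of the argument, the a priori lower bound. Because ${\rm div\,}\hat b\neq 0$ the kernel is not symmetric, and a single forward Nash $G$-function estimate for $u(t,z;t_s,\cdot)$ combined with the upper bound does not yield the on-diagonal lower bound: Nash's chaining inequality bounds $u(t,x;s,y)$ from below by $(t-s)^{d/2}\langle k_\beta\,u(t,x;t_s,\cdot)\,u(t_s,\cdot;s,y)\rangle$ (up to constants), so one also needs a lower $G$-estimate for $u(t_s,\cdot;s,y)$ as a function of its \emph{first} spatial variable, i.e.\ for the heat kernel of the formal adjoint $\Lambda_*=-\nabla\cdot a\cdot\nabla+\nabla\cdot(\hat b+\tilde b)$, which is a different kernel. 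The paper resolves this with two $G$-function estimates (Propositions \ref{g1_thm_3} and \ref{g2_thm_3}) glued through the heat kernel $h$ of the auxiliary operator $\mathcal H^-=\Lambda-({\rm div\,}\hat b)_-$, which dominates both $u$ and $u_*$ pointwise; this yields a near-diagonal lower bound for $h$ (Theorem \ref{lb_div}), which is then transferred back to $u$ via the Duhamel formula. None of this appears in your sketch, and it is precisely where the smallness of the Kato norm $\mu_-$ enters: in the Duhamel transfer and in the upper bound for $h$, not --- as you propose --- inside the differential inequality for the $G$-function. There, the negative part of the divergence is handled through its form-boundedness \eqref{c5} with $\nu_-<\sigma$; Kato-class smallness provides no form-type absorption against the Dirichlet form, so the step as you describe it would fail.

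Two smaller points. Proposition \ref{emb_prop} places $\tilde b$ in $\mathbf M_\delta$, not in $\mathbf{MF}_\delta$ (the distinction between the two classes is exactly the point of the paper); your use is harmless for the upper bound, since Theorem \ref{thm0_} only requires $b_\varepsilon=\hat b_\varepsilon+\tilde b_\varepsilon\in\mathbf M_\delta$, but you should not rely on an $\mathbf{MF}_\delta$ bound for $\tilde b$ anywhere. Also, the paper's regularization of the Nash function is $\langle\Gamma\log[\varepsilon\Gamma+U]\rangle$ with $\Gamma$ the Gaussian weight itself, not a constant shift $\eta$: the comparisons $\Gamma\geq CU$, $-\log V\geq 1$, and the control of the extra $\varepsilon\nabla\Gamma$ terms by the ${\rm BMO}$ and $\mathbf{MF}_\delta$ structure all exploit this specific choice, so ``letting $\eta\downarrow 0$'' is not an innocuous detail and would need to be reworked.
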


The last assertion in the case (${\rm d}_1$) is due to \cite{QX} (in fact, valid for weak solutions in the standard triple, not just approximation solutions). The case (${\rm d}_2$) is a consequence of Theorem \ref{thm0}(\textit{ii}). 

\medskip

If $f \in L^2$ and $|b| \in L^2_{\loc}(\mathbb R_+ \times \mathbb R^d)$, ${\rm div\,}b$ is form-bounded with form-bound $\nu<1$, then the corresponding Cauchy problem for \eqref{eq1} has a unique weak solution, cf.\,Theorem \ref{thm0}(\textit{ii}). Theorem \ref{thm0__}(d) has the
advantage that it does not require $|b| \in L^2_{\loc}(\mathbb R_+ \times \mathbb R^d)$.

\medskip

The constants $\mu_\pm^\ast$ that bound the admissible values of the Kato norms in Theorem \ref{thm0__} are given by, in principle, explicit but rather complicated expressions (e.g.\,they will depend on the constants in the upper Gaussian bound of Theorem \ref{thm0_}), so we will not attempt writing them down here.

\begin{remark}
The upper bound in Theorem \ref{thm0_}  becomes local in time if
$g$ satisfies only \eqref{g_cond},
$\mu_+$ by the local Kato norm of $({\rm div\,}b)_+$, i.e.\,the maximum between
$$
\inf_{\vartheta>0}\sup_{t \geq \vartheta, x \in \mathbb R^d}\int_{t-\vartheta}^{t} \langle k(t-r,x,\cdot)({\rm div\,}b)_+(r,\cdot)\rangle dr
$$
and
$$
\inf_{\vartheta>0}\sup_{s \geq 0, x \in \mathbb R^d}\int_{s}^{s+\vartheta} \langle k(r-s,x,\cdot)({\rm div\,}b)_+(r,\cdot)\rangle dr.
$$
The two-sided bound in (a) becomes local in time if one requires $g \in L^2_{\loc}(\mathbb R_+)$, $h_\pm \in L^1_{\loc}(\mathbb R_+)$ and replaces the global in time Kato norms of $({\rm div\,}b)_\pm$ by their local counterparts.
\end{remark}

\begin{remark}
\label{rem_Nash_class}
Concerning the divergence form equation \eqref{eq1}, the authors  obtained in \cite{KiS_nash}  an $L^1$ strong solution theory of \eqref{eq1} with measurable uniformly elliptic $a=a(x)$ and $b=b(x)$ in the elliptic Nash class 
$$
|b| \in L^2_{\loc} \quad \text{ and } \quad \sup_{x \in \mathbb{R}^d} \int_0^h \big\langle k(t,x,\cdot)|b(\cdot)|^2\big\rangle^{\frac{1}{2}}  \; \frac{dt}{\sqrt{t}} \quad \text{ is sufficiently small, for some $h>0$},
$$
without any assumptions on ${\rm div\,}b$.
These assumptions, moreover, provide two-sided Gaussian bounds on the heat kernel of \eqref{eq1}.

The elliptic Nash class contains e.g.\,$b=b(x)$ with $|b| \in L^p+L^\infty$, $p>d$, but it also contains some $b$ with $|b| \not \in L^{2+\varepsilon}_{\loc}$, $\varepsilon>0$.
Despite the fact that equation \eqref{eq1} with $b$ in the elliptic Nash class admits  $L^1$ strong solution theory, it does not seem to admit even an $L^2$ weak solution theory, see discussion in \cite{KiS_nash}.

\end{remark}

\begin{remark}

Parabolic equation \eqref{eq1} admits weak solution theory in the standard tripe of Hilbert spaces $W^{1,2} \subset L^2 \subset W^{-1,2}$ if either  $b \in L^\infty{\rm BMO}^{-1}$ \cite{QX} or $|b|$ is form-bounded with form-bound $\delta<1$:

\begin{definition*}
A vector field $b \in [L^2_{\loc}(\mathbb R_+ \times \mathbb R^d)]^d$ is said to be form-bounded (with form-bound $\delta>0$) if for a.e.\,$t \in \mathbb R_+$
\begin{equation}
\label{fbd_class}
\|b(t,\cdot) \psi\|_2^2   \leq \delta \|\nabla \psi\|_2^2+ g(t)\|\psi\|_2^2, \quad \forall\psi \in W^{1,2},
\end{equation}
for a function $g  \in L^1_{\loc}(\mathbb R_+)$.
\end{definition*}

For time-homogeneous $a$ and $b$,
Mazya-Verbitsky \cite{MV} proved that the general second order elliptic operator $-\nabla \cdot a \cdot \nabla + b \cdot \nabla + V$ with time-homogeneous coefficients is $W^{1,2} \rightarrow W^{-1,2}$ bounded if and only if 
$b=\hat{b}+\tilde{b}$, where $\hat{b}$ is form-bounded with some $\delta$, and $\tilde{b} \in {\rm BMO}^{-1}$,
and distributional $V$ is form-bounded in the sense that
$$
|\langle V\psi,\psi\rangle| \leq \nu \|\nabla \psi\|_2^2 + c(\nu)\|\psi\|_2^2, \quad \psi \in C_c^\infty
$$
for suitable $\nu>0$, $c(\nu)<\infty$.

The results of the present paper show that if one deals with such aspects of the regularity theory of $-\Delta +b \cdot \nabla$, $-\nabla \cdot a \cdot \nabla + b \cdot \nabla$ as the existence and uniqueness of weak solution, upper and lower Gaussian bounds, one can  consider even less restrictive  
assumptions on the vector field $b$.
Let us also add that the operator $-\Delta + b \cdot \nabla + V$ also admits an $L^2$ theory (but not  $W^{1,2} \rightarrow W^{-1,2}$ boundedness) if $b$ is only weakly form-bounded, i.e.\,$b \in \mathbf{F}_{\delta_b}^{\scriptscriptstyle 1/2}$, but the potential $V$ satisfies a somewhat more restrictive condition than the form-boundedness: $$\||V|^{\frac{3}{4}}(\lambda-\Delta)^{-\frac{3}{4}}\|_{2 \rightarrow 2} \leq \delta_V^{\frac{3}{4}},$$
where $\delta_b+\delta_V<1$. See \cite[Sect.\,5.4]{KiS_theory}.
\end{remark}

See also further discussion and examples in Section \ref{comm_sect}.

\bigskip

\section{Proof of Proposition \ref{emb_prop}}

\label{emb_prop_proof}

Given a $b \in L^\infty\,{\rm BMO}^{-1}$, 
we write
$$
\langle b_k(t) \psi,\psi\rangle = -2\sum_{i=1}^d \langle B_{ik}(t)|\psi|\nabla_i |\psi|\rangle.
$$
By the $\mathcal H_1$-${\rm BMO}$ duality, 
$$
|\langle b_k(t)\psi,\psi\rangle| \leq 2\sum_{i=1}^d\|B_{ik}(t)\|_{\rm BMO}\||\psi|\nabla_i |\psi|\|_{\mathcal H_1}.
$$
Therefore, by Proposition \ref{qian_xi_prop},
$$
|\langle b(t) \psi,\psi\rangle|\equiv \bigl(\sum_{k=1}^d \langle b_k(t) \psi,\psi\rangle^2 \bigr)^{1/2} \leq 2C\|B(t)\|_{\rm BMO}\|\nabla \psi\|_2\|\psi\|_2,
$$
i.e.\,$b \in \mathbf{M}_\delta$ with $\delta=2C\|B\|_{L^\infty\,{\rm BMO}}$ and $g=0$.

\bigskip

\section{Proof of Theorem \ref{thm0}(\rm{\textit{i}})}

Put $$H_+:=\mathcal W^{\frac{3}{2},2}, \quad H:=\mathcal W^{\frac{1}{2},2}, \quad H_-:=\mathcal W^{-\frac{1}{2},2}.$$ Clearly, $H_-$ is the dual of $H_+$ with respect to the inner product in $H$. 

\begin{proposition}
\label{lem_fbd_est}
Let $u \in L^2_{\loc}(]s,T[,H_+)$. Then, for every 
$\varphi \in L^2_{com}(]s,T[,H_+)$, for all $T> t_1 > s_1>s$,
\begin{align*}
\int_{s_1}^{t_1} \langle |b(t) \cdot \nabla u,(\lambda-\Delta)^{\frac{1}{2}}\varphi| \rangle dt  \leq \delta \|u\|_{L^2([s_1,t_1],H_+)}\|\varphi\|_{L^2([s_1,t_1],H_+)}.
\end{align*}
\end{proposition}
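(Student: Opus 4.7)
The plan is to reduce the bound to a pointwise-in-$t$ estimate via Cauchy--Schwarz in time, and to obtain the pointwise estimate by the standard trick of factoring $|b|=|b|^{1/2}\cdot|b|^{1/2}$ and invoking the weak form-boundedness \eqref{wfbd} twice. First, using $|b(t)\cdot\nabla u|\leq|b(t)|\,|\nabla u|$ with $|\nabla u|=(\sum_k|\nabla_k u|^2)^{1/2}$ and Cauchy--Schwarz in $x$, I would write
$$
\int|b(t)||\nabla u(t)||(\lambda-\Delta)^{1/2}\varphi(t)|\,dx\leq \bigl\||b(t)|^{1/2}|\nabla u(t)|\bigr\|_2\cdot\bigl\||b(t)|^{1/2}(\lambda-\Delta)^{1/2}\varphi(t)\bigr\|_2.
$$
The second factor is handled directly by \eqref{wfbd} applied to $\psi=(\lambda-\Delta)^{1/2}\varphi(t)\in\mathcal W^{1/2,2}$, giving the bound $\sqrt{\delta}\,\|\varphi(t)\|_{H_+}$.

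The (minor) obstacle is the first factor, since \eqref{wfbd} is phrased for a scalar test function whereas $|\nabla u|$ is the pointwise Euclidean norm of a vector. I would resolve this by expanding component-wise:
$$
\bigl\||b(t)|^{1/2}|\nabla u(t)|\bigr\|_2^{2}=\sum_{k=1}^{d}\int|b(t)||\nabla_k u(t)|^{2}\,dx=\sum_{k=1}^{d}\bigl\||b(t)|^{1/2}\nabla_k u(t)\bigr\|_2^{2},
$$
and applying \eqref{wfbd} to each scalar $\psi=\nabla_k u(t)$ separately. Since $\nabla_k$ commutes with $(\lambda-\Delta)^{1/4}$ and $\sum_k\|\nabla_k v\|_2^{2}=\langle-\Delta v,v\rangle\leq\|(\lambda-\Delta)^{1/2}v\|_2^{2}$, choosing $v=(\lambda-\Delta)^{1/4}u(t)$ yields
$$
\sum_{k=1}^{d}\bigl\||b(t)|^{1/2}\nabla_k u(t)\bigr\|_2^{2}\leq\delta\sum_{k=1}^{d}\bigl\|\nabla_k(\lambda-\Delta)^{1/4}u(t)\bigr\|_2^{2}\leq\delta\,\|u(t)\|_{H_+}^{2}.
$$

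Combining the two factors produces the pointwise bound
$$
\int\bigl|b(t)\cdot\nabla u(t)\bigr|\,\bigl|(\lambda-\Delta)^{1/2}\varphi(t)\bigr|\,dx\leq\delta\,\|u(t)\|_{H_+}\|\varphi(t)\|_{H_+},
$$
valid for a.e.\ $t\in\,]s_1,t_1[$. Finally, I would integrate over $t\in[s_1,t_1]$ and apply Cauchy--Schwarz in $t$; the right-hand side is finite by the hypotheses $u\in L^{2}_{\loc}(]s,T[,H_+)$ and $\varphi\in L^{2}_{com}(]s,T[,H_+)$, yielding the claimed estimate.
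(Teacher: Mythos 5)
Your argument is correct and essentially the same as the paper's: both factor $|b|=|b|^{1/2}\cdot|b|^{1/2}$, apply Cauchy--Schwarz, invoke the weak form-boundedness \eqref{wfbd} twice (once on the $u$-factor, once on the $\varphi$-factor), and use that $\nabla$ commutes with $(\lambda-\Delta)^{1/4}$ so that $\|(\lambda-\Delta)^{1/4}\nabla u\|_2\leq\|(\lambda-\Delta)^{3/4}u\|_2$, before integrating in $t$. The only cosmetic differences are that you apply \eqref{wfbd} component-wise instead of through the operator norm $\||b|^{1/2}(\lambda-\Delta)^{-1/4}\|_{2\to2}$, and you dispense with the paper's preliminary truncation $b_n=\mathbf{1}_{|b|\leq n}b$ plus Fatou, which your formulation with nonnegative integrands indeed does not require.
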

\begin{proof}
It suffices to consider $b=b_n$, where $b_n=\mathbf{1}_{|b| \leq n}b$, and then take $n \rightarrow \infty$ using Fatou's Lemma. Thus, without loss of generality, $b$ is bounded. We have, using H\"{o}lder's inequality,
\begin{align*}
\langle |b(t) \cdot \nabla u,(\lambda-\Delta)^{\frac{1}{2}}\varphi| \rangle & =\langle |b(t)^{\frac{1}{2}} \cdot (\lambda-\Delta)^{-\frac{1}{4}}(\lambda-\Delta)^{\frac{1}{4}}\nabla u|,\big||b(t)|^{\frac{1}{2}}(\lambda-\Delta)^{-\frac{1}{4}}(\lambda-\Delta)^{\frac{3}{4}}\varphi\big|\rangle \\
& \leq \||b|^{\frac{1}{2}}(\lambda-\Delta)^{-\frac{1}{4}}\|_{2 \rightarrow 2}\|(\lambda-\Delta)^{\frac{1}{4}}\nabla u\|_2 \||b|^{\frac{1}{2}}(\lambda-\Delta)^{-\frac{1}{4}}\|_{2 \rightarrow 2}\|(\lambda-\Delta)^{\frac{3}{4}}\varphi\|_2.
\end{align*} 
Note that
\begin{align*}
\|(\lambda-\Delta)^{\frac{1}{4}}\nabla u\|^2_2 &=\langle \nabla(\lambda-\Delta)^{\frac{1}{4}}u,\nabla (\lambda-\Delta)^{\frac{1}{4}}u\rangle \\
& \leq \|(\lambda-\Delta)^{\frac{3}{4}}u\|^2_2.
\end{align*}
The result now follows upon applying condition $b \in L^\infty\,\mathbf{F}_\delta^{\scriptsize 1/2}$.
\end{proof}

\begin{proposition}
\label{prop1}
Let $\delta<1$.
Let $u \in L^\infty_{\loc}(]s,T],H) \cap L^2_{\loc}(]s,T],H_+)$ be a weak solution to equation \eqref{eq0_}. Then the following is true:

{\rm(\textit{i})} $\partial_t u \in L^2_{\loc}(]s,T],H_-)$;

{\rm(\textit{ii})} $u \in C(]s,T],H)$ (after redefinition on a measure zero set);

{\rm(\textit{iii})} for all $T \geq t_1 \geq s_1>s$,
$$
\|u(t_1)\|_{H}^2 + 2(1-\delta)\int_{s_1}^{t_1} \|u(t)\|^2_{H_+}dt \leq  \|u(s_1)\|_{H}^2.
$$
\end{proposition}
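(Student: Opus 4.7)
The three assertions are a standard Lions-type package for a Gelfand triple; here the triple is $H_+ \subset H \subset H_-$ with $H = \mathcal W^{\frac{1}{2},2}$ and $H_\pm = \mathcal W^{\pm\frac{1}{2}+1,2}$. The main input beyond the abstract machinery is Proposition \ref{lem_fbd_est}, whose proof in fact yields a pointwise-in-$t$ bound. Throughout I will write $\langle \cdot, \cdot\rangle_{H_-,H_+}$ for the duality pairing obtained by extending $\langle f,g\rangle_H = \langle(\lambda-\Delta)^{1/4}f,(\lambda-\Delta)^{1/4}g\rangle_2$.

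\textbf{Step 1 (Proof of (i)).} I would read off the distributional time derivative from \eqref{weak_sol}. The left-hand side of \eqref{weak_sol} is $\int_s^T \langle u(t),\partial_t\varphi(t)\rangle_H\,dt$, while the right-hand side rewrites as $\int_s^T\langle u(t),\varphi(t)\rangle_{H_+}\,dt + \int_s^T \langle b(t)\cdot\nabla u(t),(\lambda-\Delta)^{1/2}\varphi(t)\rangle\,dt$. Hence, for a.e.\,$t$, $\partial_t u(t)$ is identified with the functional
\[
\varphi \longmapsto -\langle u(t),\varphi\rangle_{H_+} - \langle b(t)\cdot\nabla u(t),(\lambda-\Delta)^{1/2}\varphi\rangle, \qquad \varphi \in H_+.
\]
The first term has modulus bounded by $\|u(t)\|_{H_+}\|\varphi\|_{H_+}$. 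For the second, the estimate in the proof of Proposition \ref{lem_fbd_est}, applied pointwise in $t$, gives $|\langle b(t)\cdot\nabla u(t),(\lambda-\Delta)^{1/2}\varphi\rangle| \leq \delta\,\|u(t)\|_{H_+}\|\varphi\|_{H_+}$. Therefore $\|\partial_t u(t)\|_{H_-} \leq (1+\delta)\|u(t)\|_{H_+}$ for a.e.\,$t$, and $\partial_t u \in L^2_{\loc}(]s,T],H_-)$ because $u \in L^2_{\loc}(]s,T],H_+)$.

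\textbf{Step 2 (Proof of (ii)).} With $u \in L^2_{\loc}(]s,T],H_+)$ and $\partial_t u \in L^2_{\loc}(]s,T],H_-)$ in the Gelfand triple $H_+ \hookrightarrow H \hookrightarrow H_-$ of Bessel potential spaces, the classical Lions--Aubin embedding yields, after modification on a null set, $u \in C(]s,T],H)$. I would apply this on each compact subinterval of $]s,T]$.

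\textbf{Step 3 (Proof of (iii)).} The same regularity of $u$ and $\partial_t u$ triggers the Lions chain rule
\[
\tfrac{d}{dt}\|u(t)\|_H^2 = 2\,\langle \partial_t u(t),u(t)\rangle_{H_-,H_+} \quad \text{in } \mathcal D'(]s,T[).
\]
Inserting the identification from Step 1 with $\varphi=u(t)$ and applying the pointwise version of Proposition \ref{lem_fbd_est} once more, with $|\langle b(t)\cdot\nabla u(t),(\lambda-\Delta)^{1/2}u(t)\rangle| \leq \delta\,\|u(t)\|_{H_+}^2$, yields
\[
\tfrac{d}{dt}\|u(t)\|_H^2 \leq -2(1-\delta)\|u(t)\|_{H_+}^2.
\]
Integrating over $[s_1,t_1]$ produces the claimed energy inequality; continuity from Step 2 makes the boundary evaluations meaningful.

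\textbf{Main obstacle.} The only delicate point is the chain-rule identity used in Step 3 for a weak solution of the given regularity. The standard justification is to mollify $u$ in time (convolve with $\eta_\varepsilon$, cut off near the endpoints), apply the chain rule to the smooth approximants, then pass to the limit using the uniform bounds in $L^2(H_+)$ and $L^2(H_-)$. Steps 1 and 2 are essentially bookkeeping once the pointwise form of Proposition \ref{lem_fbd_est} is in hand.
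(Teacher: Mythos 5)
Your proposal is correct and follows essentially the same route as the paper's proof in Appendix \ref{lions_app}: you identify $\partial_t u$ from \eqref{weak_sol} using the pointwise-in-$t$ form of the estimate in Proposition \ref{lem_fbd_est} to get $\|\partial_t u(t)\|_{H_-}\leq(1+\delta)\|u(t)\|_{H_+}$, then invoke the Lions--Magenes package (continuity into $H$ and the chain rule $\frac{d}{dt}\|u\|_H^2=2\langle \partial_t u,u\rangle_{H_-,H_+}$, justified by time mollification) and integrate to obtain (iii). The only cosmetic remark is that the embedding you call ``Lions--Aubin'' is really the Lions--Magenes result for the triple $H_+\subset H\subset H_-$, which is exactly what the paper proves by hand via Friedrichs mollifiers.
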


We prove Proposition \ref{prop1} in Appendix \ref{lions_app} by specifying the results in \cite{LM} to tripe $H_+ \subset H \subset H_-$.

\medskip

Armed with Proposition \ref{prop1}, we now prove Theorem \ref{thm0}(\textit{i}) using  some standard arguments. We include the details for the sake of completeness, and also because some care need to be taken when discussing approximation involving non-local operators.

1.~First, let $f \in \mathcal S$.  Fix $b_n \in C^\infty(\mathbb R_+ \times \mathbb R^d) \cap L^\infty(\mathbb R_+ \times \mathbb R^d)$ that have the same weak form-bound $\delta$ and $\lambda$ as $b$  (so, independent of $n$), and converge to $b$ in $L^1_{\loc}(\mathbb R_+ \times \mathbb R^{d})$ (see Proposition \ref{fbd_approx_prop}). Let $u_n$ denote the strong solution to Cauchy problem
$$
(\partial_t+\lambda - \Delta + b_n \cdot \nabla)u_n=0, \quad u_n(s)=f.
$$
Multiplying the last equation by $(\lambda-\Delta)^{\frac{1}{2}}u_n$ and integrating, we obtain a uniform in $n$ bound
\begin{equation}
\label{un_bd}
\sup_{t \in [s,T]}\|u_n(t)\|^2_{H} + 2(1-\delta)\int_{s}^T \|u_n(t)\|^2_{H_+} dt \leq \|f\|_{H}^2.
\end{equation}
Using a weak compactness argument, we can find a subsequence (also denoted by $u_n$) and a function $u \in L^\infty ([s,T],H) \cap L^2([s,T],H_+)$ such that
\begin{equation}
\label{un_conv}
u_n \rightarrow u  \quad \text{ weakly in } L^2([s,T],L^2), L^2([s,T],H_+).
\end{equation}

\smallskip

2.~Next, let us show that $u$ is a weak solution to \eqref{eq0_}. Let $\varphi \in C_c^\infty(]s,T[,\mathcal S)$. Let us pass to the limit $n \rightarrow \infty$ in
 \begin{align*}
-\int_s^T \langle u_n,\partial_t \varphi \rangle_H dt + \int_s^T \langle u_n,\varphi \rangle_{H_+} dt  
+ \int_s^T \langle b_n(t) \cdot \nabla u_n,(\lambda-\Delta)^{\frac{1}{2}}\varphi\rangle=0.
\end{align*}
Only the last term requires a comment. We have
\begin{align*}
&\langle b(t)\cdot \nabla u - b_n(t)\cdot \nabla u_n,(\lambda-\Delta)^{\frac{1}{2}}\varphi\rangle\\
&=\langle b(t)\cdot (\nabla u - \nabla u_n),(\lambda-\Delta)^{\frac{1}{2}}\varphi\rangle + \langle (b(t)- b_n(t))\cdot \nabla u_n,(\lambda-\Delta)^{\frac{1}{2}}\varphi\rangle=I_1+I_2.
\end{align*}
Let us show that $\int_s^T I_1$, $\int_s^T I_2 \rightarrow 0$ as $n \rightarrow \infty$.

We have
$$
I_1=-\langle (\lambda-\Delta)^{\frac{3}{4}}(u-u_n),\nabla (\lambda-\Delta)^{-\frac{3}{4}}\cdot b(t)(\lambda-\Delta)^{\frac{1}{2}}\varphi \rangle,
$$
where, clearly,
$$
\|\nabla (\lambda-\Delta)^{-\frac{3}{4}}\cdot b(t)(\lambda-\Delta)^{\frac{1}{2}}\varphi\|_2 \leq \delta \|\varphi\|_{H_+},
$$
so $\nabla (\lambda-\Delta)^{-\frac{3}{4}}\cdot b(\lambda-\Delta)^{\frac{1}{2}}\varphi \in L^2([s,T],L^2)$, and hence $\int_s^T I_1 dt \rightarrow 0$ as $n \rightarrow \infty$. 

Next, 
$$
I_2=\langle \nabla (\lambda-\Delta)^{\frac{1}{4}}u_n,(\lambda-\Delta)^{-\frac{1}{4}}(b(t)-b_n(t))(\lambda-\Delta)^{\frac{1}{2}}\varphi \rangle.
$$
Here $\nabla (\lambda-\Delta)^{\frac{1}{4}}u_n$ is uniformly in $n$ bounded in $L^2([s,T],L^2)$, while
\begin{equation}
\label{I2}
\|(\lambda-\Delta)^{-\frac{1}{4}}(b(t)-b_n(t))(\lambda-\Delta)^{\frac{1}{2}}\varphi \|_2 \leq 2\sqrt{\delta}\||b(t)-b_n(t)|^{\frac{1}{2}}(\lambda-\Delta)^{\frac{1}{2}}\varphi\|_2.
\end{equation}
Thus, we are left to show that
\begin{equation}
\label{I3}
\lim_n\int_s^T\||b(r)-b_n(r)|^{\frac{1}{2}}(\lambda-\Delta)^{\frac{1}{2}}\varphi\|^2_2dr=0.
\end{equation}
Fix a smooth function $\sigma$ on $\mathbb R_+$ taking values in $[0,1]$,
such that  $\sigma(t)= 1$ on $[0,1]$ and $0$ on $[2,\infty[$, and put $\zeta(x)=\sigma(\frac{|x|}{R})$, $R>0$. Then $|\nabla \zeta(x)|\leq c_1R^{-1}\mathbf 1_{|\nabla\zeta|}$ and $|\Delta\zeta(x)|\leq c_2R^{-2}\mathbf 1_{|\nabla\zeta|}$, where $\mathbf 1_{|\nabla\zeta|}$ is the indicator of the set where $|\nabla\zeta| \neq 0$.
Put, for brevity, $h:=(\lambda-\Delta)^{\frac{1}{2}}\varphi$. We have
\begin{align*}
\lim_n\int_s^T \||b(r)-b_n(r)|^{\frac{1}{2}} h \|^2_2dr&\leq \lim_n(\int_s^T \||b(r)-b_n(r)|^{\frac{1}{2}}\zeta h \|^2_2dr+\int_s^T \||b(r)-b_n(r)|^{\frac{1}{2}}(1-\zeta) h \|^2_2dr)\\
&(\text{we use that } b_n \rightarrow b\text{ in } L^1_{\loc}(\mathbb R_+ \times \mathbb R^{d}))\\
&=\lim_n\int_s^T \||b(r)-b_n(r)|^{\frac{1}{2}}(1-\zeta) h \|^2_2dr,
\end{align*}
and
\begin{align*}
& \||b(t)-b_n(t)|^{\frac{1}{2}}(1-\zeta)h(t)\|_2  \leq 2\sqrt{\delta}\|(\lambda-\Delta)^{\frac{1}{4}}(1-\zeta)h(t)\|_2 \\
&=2\sqrt{\delta}\|(\lambda-\Delta)^{-\frac{3}{4}}(\lambda-\Delta)(1-\zeta)h(t)\|_2 \\
& \leq 2\sqrt{\delta}\big(\lambda^\frac{1}{4}\|(1-\zeta)h(t)\|_2 + \lambda^{-\frac{3}{4}}\|\Delta[(1-\zeta)h(t)]\|_2\big)\\
&=o(R) \text{ as $R \rightarrow \infty$ due to the choice of } \zeta.
\end{align*}
Thus \eqref{I3} is proved.

 We obtain $\int_s^T I_2 \rightarrow 0$ as $n \rightarrow \infty$.
Hence
 \begin{align*}
& -\int_s^T \langle u,\partial_t \varphi \rangle_H dt  + \int_s^T \langle u,\varphi \rangle_{H_+} dt  
+ \int_s^T\langle b(t) \cdot \nabla u,(\lambda-\Delta)^{\frac{1}{2}}\varphi\rangle=0
\end{align*}
for all $\varphi \in C_c^\infty(]s,T[,\mathcal S)$,
i.e.\,$u$ is a weak solution to \eqref{eq0_}. 

\smallskip

3. We show that $u$ satisfies the initial condition 
\begin{equation}
\label{u_ic}
\lim_{t \downarrow s}\langle u(t),\psi\rangle=\langle f,\psi \rangle \quad \text{for all $\psi \in H_-$}.
\end{equation}
 First, consider $\psi \in H$. Put
$
g_n(t):=\langle u_n(t),\psi\rangle$, $t \in [s,T].
$
Then, for all $t$, $t+\Delta t \in [s,T]$,  
\begin{align*}
& |g_n(t+\Delta t) - g_n(t)|  \leq \int_t^{t+\Delta t} |\langle (\lambda-\Delta)^{\frac{3}{4}}u_n,(\lambda-\Delta)^{\frac{1}{4}}\psi\rangle |dr + \int_t^{t+\Delta t} |\langle b_n \cdot \nabla u_n, \psi\rangle | dr \\
& \leq \|u_n\|_{L^2([t,t+\Delta t],H_+)} \|\psi\|_{L^2([t,t+\Delta t],H)} + c(d)\delta \|u_n\|_{L^2([t,t+\Delta t],H_+)} \|\psi\|_{L^2([t,t+\Delta t],H)},
\end{align*}
where, estimating the last term, we argued as in the proof of Proposition \ref{lem_fbd_est}. 

Also, $\|u_n\|_{L^\infty([s,T] \times \mathbb R^d)} \leq \|f\|_\infty$, so we can apply the Arzel\`{a}-Ascoli Theorem. The latter, combined with \eqref{un_conv}, allows to refine the subsequence $\{u_n\}$ found earlier to obtain convergence of continuous functions
$$
\langle u_n(t),\psi\rangle \rightarrow \langle u(t),\psi\rangle \quad \text{ uniformly on }[T,s] - \Omega_\psi,
$$
where $\Omega_\psi$ is a measure zero set. Moreover, using the separability of $H$ and the bound
\begin{equation}
\label{bb}
\sup_{t \in [s,T]}\|u(t)\|_{H} \leq C\|f\|_{H}, 
\end{equation}
and applying the diagonal argument, we can further refine $\{u_n\}$ to obtain the uniform convergence on $[T,s] - \Omega$ where $\Omega$ is a measure zero set independent of $\psi$.
Further, since $u$ is a weak solution to \eqref{eq0_} and hence is in $C(]s,T],H)$ by Proposition \ref{lem_fbd_est}, we obtain that $t \mapsto \langle u(t),\psi\rangle$ can be uniquely extended to a continuous function on $[s,T]$ which must coincide at the endpoint $t=s$ with $\langle f,\psi\rangle$, i.e.\,we have \eqref{u_ic} for $\psi \in H$. Finally, using
$$
|\langle u(t),\psi \rangle| \leq \|u(t)\|_{H}\|\psi\|_{H_-}
$$
and the fact that $H$ is dense in $H_-$,
we obtain \eqref{u_ic} for all $\psi \in H_-$.

\smallskip

4.\,Given a weak solution $u$ to Cauchy problem \eqref{eq0_}, \eqref{cp}, we note that Proposition \ref{prop1}(\textit{iii}) gives
\begin{equation}
\label{ei2_}
\|u(t_1)\|_{H}^2 + 2(1-\delta)\int_{s}^{t_1} \|u(t)\|^2_{H_+}dt \leq  \|f\|_{H}^2.
\end{equation}
Indeed, the initial condition $u(t) \rightarrow f$ weakly in $H$  as $t \downarrow s$ gives $\|f\|_{H} \leq \liminf_{t \downarrow s}\|u(t)\|_{H}$.
At the same time,  Proposition \ref{prop1}(\textit{iii}) yields that $t \mapsto \|u(t)\|_{H}$ is a non-increasing function, so $$\|f\|_{H}= \lim_{t \downarrow s}\|u(t)\|_{H}$$ Hence $u(t) \rightarrow f$ (strongly) in $H$ as $t \downarrow s$, which yields \eqref{ei2_}.
Combined with Proposition \ref{prop1}(\textit{ii}), this gives
$
u \in C([s,T],H).
$
The uniqueness of weak solution follows from \eqref{ei2_}. The reproduction property of the evolution family $T^{t,s}f:=u(t)$ is a consequence of the uniqueness. The fact that this evolution family is positivity preserving $L^\infty$ contraction (and hence is Markov) is immediate from the construction of the weak solution $u$ via approximation, cf.\,\eqref{un_conv}. The convergence result follows from the weak compactness argument carried out above and the uniqueness of weak solution. Finally, the energy inequality \eqref{ei2_} and the fact that $\mathcal S$ is dense in $H$ allow to extend these results to an arbitrary $f \in H$.  

\medskip

To end the proof of Theorem \ref{thm0}(\textit{i}), we need to address the question of existence of a bounded smooth approximation of vector field $b$ preserving its weak form-bound $\delta$ and constant $\lambda=\lambda_\delta$. We put $b=0$ for $t<0$.

\begin{proposition}
\label{fbd_approx_prop}
Set $\bar{b}_\varepsilon(t):=e^{\gamma_\varepsilon(t)\Delta}\mathbf{1}_\varepsilon b(t)$, $t \in \mathbb R,$ $$b_\varepsilon:=c_\varepsilon e^{\varepsilon \Delta_1}\bar{b}_\varepsilon,$$ 
where $c_\varepsilon >0$, $\Delta_1:=\frac{\partial^2}{\partial t^2}$, $\gamma_\varepsilon $ is a $]0,1]$-valued measurable function on $\mathbb R_+$, $b$ and $\gamma_\varepsilon$ are extended by $0$ to $t<0$, $\mathbf{1}_\varepsilon$ is the indicator function of $\{(t,x) \mid t \in [0,\varepsilon^{-\frac{1}{2}}], |x| \leq \varepsilon^{-1}, |b(t,x)| \leq \varepsilon^{-1}\}$. 
There exist $c_\varepsilon \uparrow 1$, $\gamma_\varepsilon(t) \downarrow 0$ for every $t \in \mathbb R_+$  as $\varepsilon \downarrow 0$ such that
$$
b_\varepsilon \in L^\infty\,\mathbf{F}_\delta^{\scriptscriptstyle 1/2} \quad \text{ with $\lambda$ independent of $\varepsilon$}, 
$$
and
$$
b_\varepsilon \rightarrow b \quad \text{ in } L^1_{\loc}(\mathbb R_+ \times \mathbb R^d).
$$
\end{proposition}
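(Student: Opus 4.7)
The plan is to verify two things: first, $b_\varepsilon \in L^\infty\,\mathbf{F}_\delta^{\scriptscriptstyle 1/2}$ with the same $\lambda$ as $b$, independently of $\varepsilon$; second, $b_\varepsilon \to b$ in $L^1_{\loc}(\mathbb R_+ \times \mathbb R^d)$. The first property is tracked along the chain $b \mapsto \mathbf{1}_\varepsilon b \mapsto \bar b_\varepsilon \mapsto b_\varepsilon$. The truncation step is immediate, since $|\mathbf{1}_\varepsilon b|\leq|b|$ pointwise yields $\||\mathbf{1}_\varepsilon b(t)|^{1/2}\psi\|_2^2 \leq \||b(t)|^{1/2}\psi\|_2^2$, so the weak form-bound passes to $\mathbf{1}_\varepsilon b$ with the same $\delta$ and $\lambda$.

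The main step is the spatial mollification. Positivity of the heat kernel gives the pointwise bound $|\bar b_\varepsilon(t,x)|\leq V(t,x):=e^{\gamma_\varepsilon(t)\Delta}|\mathbf{1}_\varepsilon b|(t,x)$, so it suffices to show $\|V^{1/2}(\lambda-\Delta)^{-1/4}\|_{2\to 2}^2\leq\delta$ for a.e.\,$t$. Set $T:=V^{1/2}(\lambda-\Delta)^{-1/4}$. Since $e^{\gamma\Delta}$ and $(\lambda-\Delta)^{-1/4}$ commute as Fourier multipliers,
\[
T^*T=(\lambda-\Delta)^{-1/4}\,e^{\gamma_\varepsilon(t)\Delta}|\mathbf{1}_\varepsilon b|\,(\lambda-\Delta)^{-1/4}=e^{\gamma_\varepsilon(t)\Delta}\,\bigl[(\lambda-\Delta)^{-1/4}|\mathbf{1}_\varepsilon b|(\lambda-\Delta)^{-1/4}\bigr].
\]
The bracketed operator is $S^*S$ for $S:=|\mathbf{1}_\varepsilon b|^{1/2}(\lambda-\Delta)^{-1/4}$, hence has norm $\leq\delta$ by the truncation step. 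Combined with $\|e^{\gamma_\varepsilon(t)\Delta}\|_{2\to 2}\leq 1$ and submultiplicativity of the operator norm, this yields $\|T\|_{2\to 2}^2=\|T^*T\|_{2\to 2}\leq\delta$.

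The time mollification is handled by a direct averaging: from $|b_\varepsilon(t,x)|\leq c_\varepsilon\int p_\varepsilon^{\rm time}(t-s)|\bar b_\varepsilon(s,x)|\,ds$, where $p_\varepsilon^{\rm time}$ is the one-dimensional heat kernel integrating to one, one obtains
\[
\||b_\varepsilon(t)|^{1/2}\psi\|_2^2\leq c_\varepsilon\int p_\varepsilon^{\rm time}(t-s)\,\||\bar b_\varepsilon(s)|^{1/2}\psi\|_2^2\,ds\leq c_\varepsilon\delta\|(\lambda-\Delta)^{1/4}\psi\|_2^2,
\]
which is bounded by $\delta\|(\lambda-\Delta)^{1/4}\psi\|_2^2$ once $c_\varepsilon\leq 1$. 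For the $L^1_{\loc}$ convergence I would take $c_\varepsilon\uparrow 1$ arbitrarily and pick $\gamma_\varepsilon(t)\in(0,1]$ by measurable selection so that $\|(e^{\gamma_\varepsilon(t)\Delta}-I)\mathbf{1}_\varepsilon b(t,\cdot)\|_{L^1(B_{1/\varepsilon})}\leq\varepsilon$, which is possible because $e^{\gamma\Delta}f\to f$ in $L^1$ as $\gamma\downarrow 0$. Then dominated convergence gives $\mathbf{1}_\varepsilon b\to b$, the choice of $\gamma_\varepsilon$ gives $\bar b_\varepsilon-\mathbf{1}_\varepsilon b\to 0$, and standard properties of the time mollifier combined with $c_\varepsilon\uparrow 1$ give $b_\varepsilon-\bar b_\varepsilon\to 0$, all in $L^1_{\loc}$.

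The main obstacle is the spatial smoothing step: a naive test-function approach would try to apply the weak form-bound of $|\mathbf{1}_\varepsilon b|$ to $g:=(e^{\gamma\Delta}|u|^2)^{1/2}$ with $u=(\lambda-\Delta)^{-1/4}\eta$, but this would require controlling $\|(\lambda-\Delta)^{1/4}g\|_2$ by $\|\eta\|_2$, which is not available. The operator-factorization trick above, which exploits the commutativity of $e^{\gamma\Delta}$ with $(\lambda-\Delta)^{-1/4}$, sidesteps this difficulty and delivers preservation of the constants $\delta$ and $\lambda$ exactly.
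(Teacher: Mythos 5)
Your truncation step and your treatment of the time mollification (pointwise domination by the averaged function, linearity of $\langle|b(t)|,|\psi|^2\rangle$ in $|b|$, and $\int p_\varepsilon^{\rm time}=1$) are fine. The gap is in the spatial step, which is the heart of the matter. The chain of equalities
\[
T^*T=(\lambda-\Delta)^{-\frac14}\,e^{\gamma_\varepsilon(t)\Delta}|\mathbf{1}_\varepsilon b|\,(\lambda-\Delta)^{-\frac14}
=e^{\gamma_\varepsilon(t)\Delta}\bigl[(\lambda-\Delta)^{-\frac14}|\mathbf{1}_\varepsilon b|(\lambda-\Delta)^{-\frac14}\bigr]
\]
conflates two different objects: the operator of multiplication by the mollified function $V=e^{\gamma\Delta}|\mathbf{1}_\varepsilon b|$ and the composition $e^{\gamma\Delta}\circ M_{|\mathbf{1}_\varepsilon b|}$ of the heat semigroup with the multiplication operator. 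These are not equal: $\bigl(e^{\gamma\Delta}M_f\bigr)\psi(x)=\int p_\gamma(x-y)f(y)\psi(y)\,dy$, whereas $M_{e^{\gamma\Delta}f}\psi(x)=\bigl(\int p_\gamma(x-y)f(y)\,dy\bigr)\psi(x)$. The commutativity of $e^{\gamma\Delta}$ with $(\lambda-\Delta)^{-1/4}$ is irrelevant here, and indeed the claimed right-hand side $e^{\gamma\Delta}S^*S$ is not even self-adjoint, while $T^*T$ is — so the identity cannot hold. Thus your ``operator-factorization trick'' does not establish that spatial mollification preserves the weak form-bound with the same $\delta$, and since everything downstream (including treating $c_\varepsilon\leq 1$ as a harmless cosmetic factor) rests on it, the proof as written is incomplete precisely at the step you yourself identified as the main obstacle.

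For comparison, the paper does not attempt to show exact preservation of $\delta$ under mollification. It writes $\bar b_\varepsilon=\mathbf{1}_\varepsilon b+(\bar b_\varepsilon-\mathbf{1}_\varepsilon b)$, observes that the error term lies in $L^\infty(\mathbb R_+,L^d)$ (since $\mathbf 1_\varepsilon b$ is bounded with compact support), and estimates its weak form-norm by H\"{o}lder's inequality and the Sobolev embedding: $\||\bar b_\varepsilon(t)-\mathbf 1_\varepsilon b(t)|^{1/2}(\lambda-\Delta)^{-1/4}g\|_2\leq c\,\|\bar b_\varepsilon(t)-\mathbf 1_\varepsilon b(t)\|_d^{1/2}\|g\|_2$. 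Choosing $\gamma_\varepsilon(t)$ small makes this $L^d$-norm as small as desired, so $\bar b_\varepsilon\in L^\infty\mathbf{F}^{\scriptscriptstyle 1/2}_{\delta_\varepsilon}$ with $\delta_\varepsilon\downarrow\delta$ and the same $\lambda$; the factor $c_\varepsilon:=\delta/\delta_\varepsilon\uparrow 1$ then restores the bound $\delta$ exactly. This is the actual role of $c_\varepsilon$ in the statement, which your argument leaves unused. If you want to repair your route rather than adopt the paper's, you would need a genuine proof that $\psi\mapsto\sqrt{e^{\gamma\Delta}|\psi|^2}$ does not increase the form of $(\lambda-\Delta)^{1/2}$ (e.g.\ via its L\'{e}vy--Khintchine/jump-kernel representation and a Minkowski-inequality argument), which is a substantive additional lemma, not a consequence of commuting Fourier multipliers.
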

\begin{proof}
First, we note that the convergence $\bar{b}_\varepsilon \rightarrow b$ in $L^1_{\loc}(\mathbb R_+ \times \mathbb R^d)$ is straightforward, provided that $\gamma_\varepsilon(t) \downarrow 0$ sufficiently rapidly as $\varepsilon \downarrow 0$.
Let us show that for any $\delta_\varepsilon \downarrow \delta$ we can select $\gamma_\varepsilon \downarrow 0$ fast enough so that
$$
\bar{b}_\varepsilon \in L^\infty\,\mathbf{F}_{\delta_\varepsilon}^{\scriptscriptstyle 1/2} \quad \text{ with the same $\lambda$}. 
$$
We have $$\bar{b}_\varepsilon=\mathbf{1}_\varepsilon b + (\bar{b}_\varepsilon-\mathbf{1}_\varepsilon b),$$ where, 
clearly, $\||\mathbf{1}_\varepsilon b|^{\frac{1}{2}}(t)(\lambda-\Delta)^{-\frac{1}{4}}\|_{2 \rightarrow 2} \leq \sqrt{\delta}$ for a.e.\,$t \in \mathbb R_+$, for all $\varepsilon$, while 
$\bar{b}_\varepsilon-\mathbf{1}_\varepsilon b \in L^\infty(\mathbb R_+,L^d)$. 
It follows from H\"{o}lder's inequality and the Sobolev Embedding Theorem that
for any $g \in L^2$, for a.e.\,$t \in \mathbb R_+$,
\begin{align*}
\||\bar{b}_\varepsilon(t)-\mathbf{1}_\varepsilon b(t)|^{\frac{1}{2}}(\lambda-\Delta)^{-\frac{1}{4}}g\|_2 & \leq \||\bar{b}_\varepsilon(t)-\mathbf{1}_\varepsilon b(t)|^{\frac{1}{2}}\|_{2d} \|(\lambda-\Delta)^{-\frac{1}{4}}g\|_{\frac{2d}{d-1}} \\
&\leq c\|\bar{b}_\varepsilon(t)-\mathbf{1}_\varepsilon b(t)\|^{\frac{1}{2}}_d \|g\|_2.
\end{align*}
For every $\varepsilon>0$ and every $t \in \mathbb R_+$, we can select $\gamma_\varepsilon(t)$ sufficiently small so that 
$\|\bar{b}_\varepsilon(t)-\mathbf{1}_\varepsilon b(t)\|^{\frac{1}{2}}_d  \leq c^{-1}\sqrt{\delta_\varepsilon-\delta}$.
Thus
$
\sup_{t \in \mathbb R_+}\||\bar{b}_\varepsilon(t)-\mathbf{1}_\varepsilon b(t)|^{\frac{1}{2}}(\lambda-\Delta)^{-\frac{1}{4}}g\|_{2} \leq \sqrt{\delta_\varepsilon-\delta}\|g\|_2.
$
It follows that for a.e.\,$t \in \mathbb R_+$, $\langle |\bar{b}_\varepsilon(t)|[(\lambda-\Delta)^{-\frac{1}{4}}g]^2\rangle \leq \delta_\varepsilon\|g\|^2_2$, $g \in L^2$, and hence
$$
\||\bar{b}_\varepsilon (t)|^{\frac{1}{2}}(\lambda-\Delta)^{-\frac{1}{4}}\|_{2 \rightarrow 2} \leq \sqrt{\delta_\varepsilon}.
$$

 Finally, recalling that $b_\varepsilon=c_\varepsilon \bar{b}_\varepsilon$, it is clear now that we can take $c_\varepsilon:=\frac{\delta}{\delta_\varepsilon}$ obtaining $$\||b_\varepsilon|^{\frac{1}{2}}(t)(\lambda-\Delta)^{-\frac{1}{4}}\|_{2 \rightarrow 2} \leq \sqrt{\delta}.$$
\end{proof}

\section{Proof of Theorem \ref{thm0_}}

Set $A=-\nabla \cdot a  \cdot \nabla$.
We prove Theorem \ref{thm0_} by first establishing an upper Gaussian bound on the heat kernel of the auxiliary operator
$$
H^+ = A  + b  \cdot \nabla + \mydiv b_+.
$$
Let $H^{t,s}f$ denote the solution of
\[ 
	\left\{ \begin{array}{rcl}
		-\frac{d}{d t} H^{t,s} f = H^+ H^{t,s} f & , & 0 \leq s < t <\infty \\
	0 \leq f \in L^1 \cap L^\infty & 
	\end{array} \right.
\]
in $L^p =L^p(\mathbb{R}^d), \;p \in [1,\infty[.$ 
Let $h(t,x;s,y)$ denote the heat kernel of $H^+$, that is, $$H^{t,s}f=\langle h(t,x;s,\cdot)f(\cdot)\rangle.$$

\begin{theorem}
\label{aux_ub_thm}
There exist generic constants $c_3,c_4 > 0$, $\omega\geq 0$ such that
\[
h(t,x;s,y) \leq  c_3 k_{c_4}(t-s,x-y)
\tag{$\mbox{UGB}^{h_+}$}
\]
for all $0 \leq s<t <\infty$.
\end{theorem}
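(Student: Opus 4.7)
I would follow the classical Moser-plus-Davies scheme, being careful about how the multiplicative class $\mathbf{M}_\delta$ and the auxiliary potential $(\mydiv b)_+$ interact under exponential conjugation. The first step is to establish that $H^+$ is an $L^p$-contraction for every $p\in[1,\infty]$ and ultracontractive. Testing $\partial_t u = -H^+u$ against $u^{p-1}$ and integrating by parts, the drift and potential recombine:
$$
\langle H^+u,u^{p-1}\rangle \geq \frac{4\sigma(p-1)}{p^2}\|\nabla u^{p/2}\|_2^2 + \left(1-\tfrac{1}{p}\right)\langle(\mydiv b)_+,u^p\rangle + \tfrac{1}{p}\langle(\mydiv b)_-,u^p\rangle,
$$
every summand on the right being non-negative. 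Hence $\|H^{t,s}\|_{p\to p}\leq 1$, and the $p=2$ case combined with Nash's inequality yields $\|H^{t,s}\|_{1\to 2}\leq C(t-s)^{-d/4}$. The formal adjoint $(H^+)^* = A - b\cdot\nabla + (\mydiv b)_-$ has the same structure (note $-b\in\mathbf{M}_\delta$ with the same $\delta,g$), so the same argument gives $\|H^{t,s}\|_{2\to\infty}\leq C(t-s)^{-d/4}$, and composing produces the ultracontractivity $\|H^{t,s}\|_{1\to\infty}\leq C(t-s)^{-d/2}$. The Kato smallness of $\mu_+$ is not used in this step; it enters only when Theorem~\ref{thm0_} is deduced from the present theorem, via a Duhamel iteration of $u(t) = h(t) + \int_s^t H^{t,r}[(\mydiv b)_+ u(r)]\,dr$ together with Khasminskii's lemma.

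Next, I would upgrade ultracontractivity to a Gaussian bound via Davies' exponential-weight method. Fix Lipschitz $\phi$ with $\|\nabla\phi\|_\infty\leq 1$ and $\alpha>0$, and consider the conjugated semigroup $\tilde H^{t,s}:=e^{\alpha\phi}H^{t,s}e^{-\alpha\phi}$, whose integral kernel is $e^{\alpha\phi(x)}h(t,x;s,y)e^{-\alpha\phi(y)}$. Expanding the conjugated operator, the $\alpha$-order cross terms from $A$ cancel by the symmetry of $a$; what remains is an $\alpha^2$ bulk term bounded by $\alpha^2\xi\|w\|_2^2$ and a drift correction $-\alpha\langle(b\cdot\nabla\phi)w,w\rangle$, and this last term is where the multiplicative bound $b\in\mathbf{M}_\delta$ is crucial:
$$
\alpha\bigl|\langle b\cdot\nabla\phi,w^2\rangle\bigr| \leq \alpha\delta\|\nabla w\|_2\|w\|_2 + \alpha g(t)\|w\|_2^2 \leq \tfrac{\sigma}{2}\|\nabla w\|_2^2 + \tfrac{\alpha^2\delta^2}{2\sigma}\|w\|_2^2 + \alpha g(t)\|w\|_2^2.
$$
Integrating in time and using \eqref{g_cond2} to control $\int_s^t g(\tau)\,d\tau\leq c\sqrt{t-s}$ yields $\|\tilde H^{t,s}\|_{2\to 2}\leq\exp(C\alpha^2(t-s)+c\alpha\sqrt{t-s})$. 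Since scalar conjugation leaves the divergence structure of the drift intact, the first step applies verbatim to $\tilde H^+$ and gives
$$
\|\tilde H^{t,s}\|_{1\to\infty} \leq C(t-s)^{-d/2}\exp\bigl(C\alpha^2(t-s)+c\alpha\sqrt{t-s}\bigr).
$$

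Finally, specialising $\phi$ to be a smoothing of $z\mapsto|z-y|$, so that $\phi(x)-\phi(y)=|x-y|$ and $\|\nabla\phi\|_\infty\leq 1$, the pointwise bound reads
$$
h(t,x;s,y) \leq C(t-s)^{-d/2}\exp\bigl(C\alpha^2(t-s)+c\alpha\sqrt{t-s}-\alpha|x-y|\bigr),
$$
and optimising over $\alpha>0$ (essentially $\alpha\asymp|x-y|/(t-s)$, with a Young-type inequality absorbing $c\alpha\sqrt{t-s}$ into the quadratic piece) produces exactly $h\leq c_3 k_{c_4}(t-s,x-y)$ with no residual $e^{\omega(t-s)}$ factor. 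The main obstacle, and the feature distinguishing this argument from the standard Moser-Davies setup with bounded drift, is matching scales through the $g$-dependent piece: hypothesis \eqref{g_cond2} forces $\sqrt{t-s}$-growth rather than the standard linear growth in the exponential weight, which is exactly the borderline compatible with parabolic scaling and a global-in-time clean Gaussian. Any weaker time-integrability of $g$ would either break the scaling or introduce an unavoidable exponential-in-time factor into the final bound.
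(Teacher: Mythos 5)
Your overall skeleton (testing $\partial_t u=-H^+u$ against $u^{p-1}$ so that the drift recombines with the potential $(\mydiv b)_+$ into non-negative terms, Nash's inequality for ultracontractivity, duality for the adjoint $A-b\cdot\nabla+(\mydiv b)_-$, Young absorption of the $g$-term via \eqref{g_cond2}, and a final optimization in the exponential weight) is the same as the paper's Fabes--Stroock-style argument, and the remark that the Kato smallness of $\mu_+$ enters only in the Duhamel step deducing Theorem \ref{thm0_} is correct. But there is a genuine gap at the heart of your Davies step: you conjugate by $e^{\alpha\phi}$ with a \emph{general} Lipschitz $\phi$ (a smoothed distance function) and then estimate the drift correction by
\begin{equation*}
\alpha\bigl|\langle (b\cdot\nabla\phi)\,w,w\rangle\bigr|\;\leq\;\alpha\delta\|\nabla w\|_2\|w\|_2+\alpha g(t)\|w\|_2^2 .
\end{equation*}
This inequality is not available under the hypothesis $b\in\mathbf{M}_\delta$. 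The multiplicative class only controls pairings of $b_k$ against perfect squares, $\langle b_k\psi,\psi\rangle$; it is designed to exploit cancellations (e.g.\ for $b\in L^\infty{\rm BMO}^{-1}$ the bound comes from the $\mathcal H_1$--${\rm BMO}$ duality applied to $|\psi|\nabla|\psi|$, cf.\ Propositions \ref{lem_bmo_est}, \ref{qian_xi_prop}). Inserting a non-constant weight $\partial_k\phi$ destroys exactly this structure: for $b=\nabla B$ with $B$ skew-symmetric in ${\rm BMO}$ one is left with terms like $\langle B_{ik},\,\partial_k\phi\;\psi\,\partial_i\psi\rangle$, and $\partial_k\phi\,\psi\,\partial_i\psi$ is no longer (a component of) a gradient of a square, so there is no compensated-compactness/Hardy-space bound of size $\delta\|\nabla\psi\|_2\|\psi\|_2$. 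Your estimate would be justified for $\mathbf{MF}_\delta$ (where one may pass to $|b|$ and use $\|\nabla\phi\|_\infty\le1$), but that is a strictly stronger hypothesis than the theorem's.

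The paper's way around this is precisely to use linear weights $e^{\alpha\cdot x}$, $\alpha\in\mathbb R^d$ constant (Fabes--Stroock), so that the only drift correction is $\langle\alpha\cdot b,\,v^2\rangle$, which is exactly what the definition of $\mathbf{M}_\delta$ bounds; the Gaussian is then recovered by optimizing $\alpha\cdot(y-x)+c_4\alpha^2(t-s)$ over $\alpha$, with the $c_\delta|\alpha|\sqrt{t-s}$ contribution from $g$ absorbed by Young's inequality -- no need for $\phi(x)-\phi(y)=|x-y|$. A secondary, more repairable weak point: your claim that the unweighted Nash/Moser step ``applies verbatim'' to the conjugated operator ignores the $\alpha$-dependent first-order terms that do \emph{not} cancel for $p>2$ (only the $p=2$ cross terms cancel by symmetry of $a$); the paper has to track the surviving term $\frac{2(p-2)}{p}\langle\alpha\cdot a\cdot\nabla v,v\rangle$ and the time-singular $g$-contribution through a weighted-supremum iteration ($W_p$ quantities) to get the weighted $\|\cdot\|_{2\to\infty}$ and $\|\cdot\|_{1\to 2}$ bounds. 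With the linear-weight choice and that bookkeeping, your plan becomes the paper's proof.
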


\begin{proof}[Proof of Theorem \ref{aux_ub_thm}] 
We follow \cite{FS}.
We consider
\[ 
	\left\{ \begin{array}{rcl}
		-\frac{d}{d t} H_\alpha^{t,s} f = H_\alpha^+ H_\alpha^{t,s} f & , & 0 \leq s < t <\infty, \\
	0 \leq f \in L^1 \cap L^\infty & 
	\end{array} \right.
\]
in $L^p =L^p(\mathbb{R}^d), \;p \in [1,\infty[,$ where $H^{t,s}_\alpha := e^{\alpha \cdot x} H^{t,s} e^{-\alpha \cdot x}$ and 
\begin{align*}
 H_\alpha^+ := e^{\alpha \cdot x} H^+ e^{-\alpha \cdot x} & = H^+ - \alpha \cdot b - \alpha \cdot a \cdot \alpha  + \alpha \cdot a \cdot \nabla + \nabla \cdot a \cdot \alpha.
\end{align*}

\begin{lemma} 
\label{lem_n}
There are generic constants $c, c_4$ such that, for all $0 \leq s < t < \infty,$
\[
\|H^{t,s}_\alpha \|_{2 \to \infty}, \|H^{t,s}_\alpha \|_{1 \to 2} \leq c (t-s)^{-d/4} e^{c_4 \alpha^2(t-s)}.
\]
\end{lemma}

\begin{proof}[Proof of Lemma] 
Set $u_\alpha(t) := H_\alpha^{t,s} f$, $f=\Real f\in C_c^\infty$, $v(t):= u_\alpha^{p/2}(t)$, $p\geq 2.$ Noticing that $$\langle b \cdot \nabla u_\alpha, u_\alpha^{p-1} \rangle = \frac{2}{p} \langle \nabla v, b v \rangle = -\frac{1}{p} \langle v^2, \mydiv b \rangle,$$ we have, using the equation,
\begin{align*}
-\frac{1}{p} \frac{d}{d t} \langle v^2(t) \rangle & = \frac{4}{p p^\prime} \|A^{1/2} v(t) \|_2^2 +\frac{1}{p^\prime} \langle v^2(t),\mydiv b_+ \rangle +\frac{1}{p} \langle v^2(t), \mydiv b_- \rangle \\
& - \frac{2(p-2)}{p} \langle \alpha \cdot a \cdot \nabla v(t), v(t) \rangle  -\langle \alpha \cdot b, v^2(t) \rangle -\langle \alpha \cdot a \cdot \alpha, v^2(t) \rangle.
\end{align*}
By $b \in \mathbf{M}_\delta$,
\begin{align*}
|\langle\alpha \cdot  b,v^2\rangle| \leq |\alpha||\langle b,v^2\rangle|&\leq |\alpha|\delta\|\nabla v\|_2\|v\|_2 +|\alpha| g \|v\|_2^2\\
&\leq \frac{4\gamma}{pp'}\|A^\frac{1}{2}v\|_2^2+\bigg(|\alpha| g+\frac{1}{4\gamma}\frac{pp'}{4}
\frac{\delta^2}{\sigma}|\alpha|^2\bigg)\|v\|_2^2  \quad (\gamma>0),
\end{align*}
 and so, applying the quadratic inequality, we have
 \[
- \frac{d}{d t}\|v\|_2^2 \geq \frac{4}{p'}(1-2\gamma)\|A^{1/2} v \|_2^2  -\bigg(p|\alpha| g+ \frac{p^2p'}{4\gamma}\bigg[\frac{4\gamma}{pp'}\xi+\bigg(\frac{p-2}{p}\bigg)^2\xi+\frac{\delta^2}{4\sigma}\bigg]\alpha^2\bigg)\|v\|_2^2
\]

Putting $p=2$ and $\gamma=\frac{1}{2}$, we have, taking into our assumption on $g$, i.e.\,$\int_s^t g ds \leq c_\delta\sqrt{t-s}$,
\[
\|u_\alpha(t) \|_2 \leq \|f\|_2 \exp\bigg[\bigg(\xi+\frac{\delta^2}{2\sigma}\bigg)\alpha^2(t-s)+c_\delta|\alpha|\sqrt{t-s}\bigg] 
\tag{$\star^a$}
\]

Also, using Nash's inequality
\begin{equation}
\label{nash_ineq}
\|\nabla \psi \|_2^2 \geq C_N \| \psi \|_2^{2 +4/d} \|\psi\|_1^{-4/d}, \quad \psi \in W^{1,2} \cap L^1,
\end{equation}
we have, putting $\gamma=\frac{1}{4}$, $p\geq 4$ and setting $F(t,\alpha)=p|\alpha| g(t)+p^2\big(\xi +\frac{\delta^2}{3\sigma}\big)\alpha^2$, 
\begin{align*}
 -\frac{d}{d t}\|v\|_2^2 & \geq \frac{2}{p'}\|A^{1/2}v\|_2^2 -F(t,\alpha) \|v \|_2^2 \\
& \geq \frac{2}{p'}\sigma C_N \|v\|_2^{2+\frac{4}{d}} \|v\|_1^{-\frac{4}{d}} -F(t,\alpha) \|v \|_2^2,
\end{align*}
so
\begin{align*}
\frac{d}{d t} \|v\|_2^{-4/d} & \geq \frac{4\sigma C_N}{dp'} \|v\|_1^{-\frac{4}{d}} - \frac{2}{d} F(t,\alpha)\|v \|_2^{-4/d}.
 \end{align*}

 Setting $c_g=\frac{4\sigma C_N}{dp'}, \;w(t)=\|v\|_2^{-\frac{4}{d}}$ and $\mu(t)=\frac{2}{d}\int_s^tF(r,\alpha)d r ,$ 
we have
\begin{align*}
w_p(t) & \geq c_g e^{-\mu_p(t)} \int_s^t e^{\mu_p(r)} w_\frac{p}{2} (r)d r \\
& \geq  c_g e^{-\mu_p(t)} \;V_\frac{p}{2}(t) \int_s^t e^{\mu_p(r)} (r-s)^q d r ,
\end{align*}
where $ q = \frac{p}{2} - 2$ and $V_\frac{p}{2}(t):= \bigg \{ \sup \bigg[(r-s)^\frac{q d}{2 p} \|u_\alpha(r)\|_{p/2} \mid s\leq r \leq t\bigg] \bigg \}^{-\frac{2p}{d}}.$

Set $\kappa = \frac{2}{d} \bigg(\frac{3}{2}\xi+\frac{\delta^2}{3\sigma}\bigg) \alpha^2.$ Since for $s \leq r \leq t,$
\begin{align*}
-\mu_p(t) + \mu_p(r) & = - \frac{2}{d}\bigg[p^2\big(\xi +\frac{\delta^2}{3\sigma}\big)\alpha^2(t-r) + p |\alpha|\int_r^t g(\tau) d \tau \bigg] \\
& \geq - \frac{2}{d}\bigg[p^2\big( \xi+ \frac{\delta^2}{3\sigma}\big)\alpha^2(t-r) + p |\alpha| c_\delta \sqrt{t-r}  \bigg]\\
& \geq -\frac{2}{d}\bigg[p^2 \bigg(\frac{3}{2} \xi+ \frac{\delta^2}{3\sigma}\bigg) \alpha^2(t-r) + \frac{c_\delta^2}{2\xi} \bigg]\\
& = -\bigg[\kappa p^2(t-s) +\frac{c_\delta^2}{d\xi} \bigg] + \kappa p^2 (r-s), \text{ and so }\\
e^{-\mu_p(t)} \int_s^t e^{\mu_p(r)} (r-s)^q d r & \geq e^{-\kappa p^2 (t-s) - \frac{c_\delta^2}{d\xi}} \int_s^t e^{\kappa  p^2 (r-s)}(r-s)^q d r \text{ and } \\
\int_s^t e^{\kappa  p^2 (r-s)}(r-s)^q d r & \geq K p^{-2} (t-s)^\frac{p-2}{2} e^{\kappa (p^2 -1)(t-s)},
\end{align*}
where $K:= 2 \inf \big \{ p \big[1-(1-p^{-2})^{p-1} \big] \mid p \geq 2 \big \} > 0,$ we obtain
\[
w_p(t) \geq c_g K p^{-2} e^{-\kappa (t-s) - \frac{ c_\delta^2}{d\xi}} (t-s)^\frac{p-2}{2} V_\frac{p}{2}(t),
\]
or, setting $W_p(t) := \sup \big[ (r-s)^\frac{d(p-2)}{4 p} \|u_\alpha (r) \|_p \mid s \leq r \leq t \big],$
\[
W_p(t) \leq (c_g K e^{-\frac{2 c_\delta^2}{d\xi}})^{-\frac{d}{2 p}} p^\frac{d}{p} e^{(\frac{3}{2}\xi+\frac{\delta^2}{3\sigma})\frac{\alpha^2}{p} (t-s)} W_{p/2}(t), \;\; p = 2^k, \;k= 1, 2, \dots.
\]
Iterating this inequality, starting with $ k = 2,$ yields $$(t-s)^\frac{d}{4} \|u_\alpha (t)\|_\infty \leq C_g  e^{(\frac{3}{2}\xi+\frac{\delta^2}{3\sigma}) \alpha^2 (t-s)} W_2 (t).$$ Finally, taking into account $(\star^a),$ we arrive at $$\|H^{t,s}_\alpha \|_{2 \to \infty} \leq (t-s)^{-d/4} C_g e^{(3\xi+\frac{\delta^2}{\sigma}) \alpha^2 (t-s)}.$$ The same bound holds for $\|H^{t,s}_\alpha(b) \|_{1 \to 2}=\|\big(H^{t,s}_{-\alpha}(-b)\big)^*\|_{2\to \infty}$. This ends the proof of Lemma \ref{lem_n}.
\end{proof}

From Lemma \ref{lem_n} we obtain $$h(t,x;s,y)\leq C_g^2 (t-s)^{-\frac{d}{2}}e^{\alpha\cdot(y-x)+c_4\alpha^2(t-s)}, \quad c_4=3\xi+2\frac{\delta^2}{3\sigma}.$$ Putting $\alpha=\frac{x-y}{2c_4(t-s)}$, we obtain 
$(\mbox{UGB}^{h_+})$. This completes the proof of Theorem \ref{aux_ub_thm}.
\end{proof}

We are in position to complete the proof of Theorem \ref{thm0_}.
We consider operator $A + b \cdot \nabla$ as $H^+$ perturbed by potential $- \mydiv b_+$. Hence, the sought upper bound on the heat kernel of $A + b \cdot \nabla$ follows from Theorem \ref{aux_ub_thm} and a standard argument based on the Duhamel formula using that the Kato norm $\mu_+$ of $({\rm div\,} b)_+$ is sufficiently small. \qed

\bigskip

\section{A priori lower bound}

\label{apr_lb_thm}

In this section,
$u(t,x;s,y)$ denotes the heat kernel of operator $-\nabla \cdot a \cdot \nabla + b \cdot \nabla$
with matrix $a \in H_{\sigma,\xi}$, vector field $b$ and ${\rm div\,}b$ assumed to be bounded $C^\infty$ smooth.

\begin{theorem}[a priori lower bound] 
\label{thm2}

In the assumptions of Theorem \ref{thm0__}, a global in time lower Gaussian  bound
\[
\tag{${\rm LGB}$}
\label{lgb_apr}
c_1 k_{c_2}(t-s;x-y)\leq u(t,x;s,y)
\]
holds for all $0 \leq s<t<\infty$, $x,y \in \mathbb R^d$ with generic$\ast$ constants $c_i$, $i=1,2$ that can also depend on the Kato norms $\mu_\pm$.
\end{theorem}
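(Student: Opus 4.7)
I would follow Nash's two-step scheme: first establish an on-diagonal bound $u(t,x;s,x)\ge c_0(t-s)^{-d/2}$ via an entropy argument, then upgrade to the full bound \eqref{lgb_apr} by Aronson-style chaining. The upper Gaussian bound from Theorem \ref{thm0_} plays a crucial auxiliary role throughout, providing $\|u(t,\cdot;s,y)\|_\infty\le C(t-s)^{-d/2}$, controlling the mass that escapes balls during the chaining, and reconciling the entropy estimate with Jensen's inequality. The decomposition $b=\tilde b+\hat b$ from \eqref{c1}--\eqref{c5} organises the drift estimates: the divergence-free $\tilde b\in L^\infty\mathrm{BMO}^{-1}$ will interact with the entropy only through the $\mathcal H_1$--$\mathrm{BMO}$ duality (Propositions \ref{lem_bmo_est}, \ref{qian_xi_prop}); the multiplicative bound \eqref{hat_b} on $\hat b$ (with $g\in L^2(\mathbb R_+)$) handles the remaining drift; the potentials $(\mathrm{div}\,\hat b)_\pm$ are tamed by their form-boundedness \eqref{c5} together with the smallness of their Kato norms $\mu_\pm$ and Theorem \ref{aux_ub_thm}.

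\medskip

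\textbf{Nash entropy for the on-diagonal bound.} Fix $(s,y)$ and set $G(t):=-\int u(t,\cdot;s,y)\log u(t,\cdot;s,y)\,dx$. Differentiating along \eqref{eq1} (for bounded smooth $a,b$) and integrating by parts gives
\[
\frac{d G}{d t} \;=\; 4\int \nabla\sqrt u\cdot a\cdot\nabla\sqrt u\,dx \;-\; \int u\log u\,\mathrm{div}\,\hat b\,dx,
\]
the $\tilde b$-term dropping out thanks to $\mathrm{div}\,\tilde b=0$. The second integral is split according to $(\mathrm{div}\,\hat b)_\pm$, each part estimated by combining the form-boundedness \eqref{c5} with the $L^\infty$-information on $u\log u$ inherited from Theorem \ref{thm0_} and the smallness of $\mu_\pm$. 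After absorbing a small multiple of the Dirichlet form and invoking Nash's inequality \eqref{nash_ineq} together with $\|u(t,\cdot;s,y)\|_1\le C$ (guaranteed by Kato smallness of $(\mathrm{div}\,b)_+$), one obtains a differential inequality of the form $\frac{d G}{d t}\ge c_1 e^{-(2/d)G(t)}-\Psi(t)$ with $\Psi\in L^1_{\loc}(\mathbb R_+)$, integrating to $G(t)\le \frac{d}{2}\log(t-s)+O(1)$. Jensen's inequality and the upper Gaussian bound then upgrade the entropy bound to $u(t,y;s,y)\ge c_0(t-s)^{-d/2}$.

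\medskip

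\textbf{Main obstacle: regularization of the $G$-function.} The display above is formal because $\log u$ is unbounded and, more importantly, the multiplicative bound \eqref{hat_b} is \emph{not} quadratic in $\psi$, so it cannot be inserted naively into the auxiliary estimates of the $\hat b\cdot\nabla u$ terms that arise when $G$ is differentiated rigorously; this is precisely the ``hard part $L^\infty\mathbf{F}_\delta^{\scriptscriptstyle 1/2}$'' of $\mathbf M_\delta$ flagged in the introduction. I would replace $\log u$ by a smooth convex $\Phi_\varepsilon$ that is linear for $u\le\varepsilon$ and for $u\ge\varepsilon^{-1}$ and logarithmic in between, derive the analogue of the $G$-inequality using only \eqref{hat_b} applied with $\psi$ a carefully chosen fractional power of $u$ (so that the right-hand side remains linear in $\|\nabla\psi\|_2\|\psi\|_2$ up to $g(t)\|\psi\|_2^2$), and then pass $\varepsilon\downarrow 0$ by dominated convergence, legitimate since the a priori smooth $u$ is strictly positive. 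All constants so produced depend only on the generic$\ast$ data, as required.

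\medskip

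\textbf{Aronson chaining.} The off-diagonal bound is then standard. Put $r:=|x-y|$, $\tau:=t-s$. If $r^2\le\tau$, the on-diagonal bound combined with the reproduction property and the upper bound of Theorem \ref{thm0_} yield \eqref{lgb_apr} directly. Otherwise, set $n:=\lceil r^2/\tau\rceil$, interpolate $x=x_0,x_1,\ldots,x_n=y$ with step $r/n$ along the segment $[x,y]$, and write $u(t,x;s,y)$ as an $n$-fold Chapman--Kolmogorov integral restricted to balls $B_j$ of radius $\rho\sim\sqrt{\tau/n}$ centred at $x_j$; on each hop the on-diagonal estimate gives $u\ge c\rho^{-d}$, while the upper Gaussian bound guarantees that the mass outside $B_j$ is negligible. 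Multiplying the $n$ resulting factors produces the required $\exp(-c_2 r^2/\tau)$, completing the proof of \eqref{lgb_apr}. The principal technical difficulty is therefore concentrated in the regularization step, while the entropy and chaining steps are classical once the drift contributions are correctly absorbed.
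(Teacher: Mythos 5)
Your overall plan (Nash-type lower bound near the diagonal plus chaining, regularization of the logarithm because the $\mathbf{MF}_\delta$ bound is not quadratic) is in the right spirit, but two essential mechanisms of the actual proof are missing, and without them the argument does not close. First, the unweighted entropy $G(t)=-\int u\log u\,dx$ cannot be ``upgraded by Jensen'' to a pointwise on-diagonal bound for a \emph{non-symmetric} kernel: the entropy bound only says that $\log u$ is $\geq -\frac d2\log(t-s)-C$ on average with respect to the measure $u\,dz$, which pins down no specific point, and the symmetric trick $u(2t,x;0,x)=\int u(t,x;0,z)^2dz$ is unavailable here. What is needed is the Gaussian-weighted Nash $G$-function $\langle k_\beta(\cdot)\log u\rangle$ estimated in the backward variable for the kernel $u$ of $\Lambda=-\nabla\cdot a\cdot\nabla+b\cdot\nabla$ \emph{and} the analogous estimate for the kernel $u_*$ of the adjoint-type operator $-\nabla\cdot a\cdot\nabla+\nabla\cdot(b\,\cdot)$, the two being spliced at the mid-time $t_s$ by Chapman--Kolmogorov and Jensen with respect to $k_\beta\,dx$ (Propositions \ref{g1_thm_3} and \ref{g2_thm_3}); you never introduce the adjoint kernel. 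Moreover, once you pass to the weighted $G$-function --- which is forced on you by the localization issue --- your claim that the $\tilde b$-contribution ``drops out because ${\rm div\,}\tilde b=0$'' fails: the terms $\langle \Gamma\,\tilde b\cdot\nabla\log V\rangle$ survive and must be estimated through the ${\mathcal H}_1$--${\rm BMO}$ duality applied to $\sqrt{\Gamma}$ and $\sqrt{\Gamma}\log V$ (Steps 1--2 of the proof of Lemma \ref{cl2}); your unweighted computation hides exactly this difficulty.

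Second, your treatment of the divergence is not viable as sketched. In your entropy inequality the term $\int u\log u\,({\rm div\,}\hat b)_-\,dx$ cannot be absorbed by form-boundedness together with the $L^\infty$-bound on $u$ (it is not even clear it is finite, since $({\rm div\,}\hat b)_-$ need not be integrable in $x$, and $\log u$ is unbounded below precisely where the lower bound is at stake). The paper circumvents this by proving the $G$-estimates, and hence the near-diagonal lower bound, for the kernel $h$ of the auxiliary operator $\mathcal H^-=\Lambda-({\rm div\,}\hat b)_-$, which dominates both $u$ and $u_*$ (Theorem \ref{lb_div}), and only then returns to $u$ by the Duhamel formula
$u=h-\int_s^t\langle u\,({\rm div\,}\hat b)_-\,h\rangle d\tau$, where it is the smallness of the Kato norm $\mu_-$ --- not form-boundedness --- that keeps the perturbation below the main term and yields $u\geq C_0(t-s)^{-d/2}$ for $|x-y|\leq\sqrt{t-s}$ (Section \ref{thm2_proof}); the chaining argument then needs precisely this \emph{near-diagonal} (not merely on-diagonal) bound. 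Your proposal assigns the Kato norms only vague auxiliary roles and has no mechanism playing the role of this $\mathcal H^-$/Duhamel step, so even granting the entropy inequality, the passage to the stated bound for $u$ is not established. (As a minor point, the paper's regularization is $\log[\varepsilon\Gamma+U]$ rather than a truncated logarithm; this particular choice is what makes the inequalities $-\log V\geq 1$, $\Gamma\geq CU$ and the spectral-gap step work, but that is a matter of implementation rather than a gap.)
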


Throughout the rest of the proof, the constants that we find are generic$\ast$ that can also depend on $\mu_\pm$.

\medskip

The proof of Theorem \ref{thm2} (given in the end of this section) is based on the following estimates of Nash's $G$-functions.

\label{lb_sect}

\subsection{$\hat{G}$-function for $-\nabla \cdot a \cdot \nabla + (\hat{b}+\tilde{b})\cdot \nabla$}

Since $\hat{b}+\tilde{b}$ is in $\mathbf{M}_\delta$, Theorem \ref{thm0_} applies and gives 
\begin{equation}
\label{ub_11}
u(t,x;s,y) \leq \hat{c}_3 k_{c_4}(t-s;x-y)
\end{equation}
where, recall, $u(t,x;s,y)$ is the heat kernel $\Lambda=-\nabla \cdot a \cdot \nabla + (\hat{b} + \tilde{b}) \cdot \nabla$.
The constants in the next proposition depend on the same parameters as the constants in the theorem except for the Kato norms $\mu_{\pm}$ and ${\rm div\,}b_{\pm}$.

Set $$\tilde{Q}(t-s) := \frac{d}{2} \log (t-s).$$

\begin{proposition}
\label{g1_thm_3}
Let $x,y \in \mathbb R^d$. Put $o=\frac{x+y}{2}$, $t_s=\frac{t+s}{2}$.
There exist constants $\beta$ and $\mathbb{C}$ such that 
\begin{align*}
G(t_s) :=\langle k_\beta(t-t_s,o-\cdot) \log u(t,z;t_s,\cdot) \rangle 
 \geq - \tilde{Q}(t-t_s) - \mathbb{C}
\end{align*}
for all $z \in B(o,\sqrt{t-t_s})$.
\end{proposition}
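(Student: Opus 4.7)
The plan is to run a Nash-type $G$-function argument: for $s \in [t_s, t)$, I set
\[
G(s) := \int_{\mathbb R^d} k_\beta(t-s, o-y)\,\log u(t,z;s,y)\,dy,
\]
with $\beta = \beta(\sigma,\xi)>0$ to be fixed, so that the target becomes $G(t_s) \geq -\tilde Q(t-t_s) - \mathbb{C}$. The strategy has three stages: (\emph{i}) derive a differential inequality $\tfrac{d}{ds}\,G(s) \leq \tfrac{d/2}{t-s} + \varphi(s)$ with $\varphi\in L^1(\mathbb R_+)$ having generic$\ast$ $\|\varphi\|_{L^1}$; (\emph{ii}) integrate from $t_s$ to $s^\ast := (t+t_s)/2$; (\emph{iii}) produce the asymptotic lower bound $G(s^\ast) + \tfrac{d}{2}\log(t-s^\ast) \geq -C$ with generic$\ast$ $C$, exploiting $z \in B(o,\sqrt{t-t_s})$ and $t-s^\ast = (t-t_s)/2$.

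For (\emph{i}), since $u(t,z;\cdot,\cdot)$ satisfies the adjoint equation $\partial_s u = -\nabla_y\cdot(a\nabla_y u) - \nabla_y\cdot(bu)$ in the backward variables and $\partial_s k_\beta(t-s,o-\cdot) = -\beta\Delta_y k_\beta$, integration by parts at the smooth a priori level gives, writing $L := \log u$,
\[
\tfrac{d}{ds}\,G(s) = -\int k_\beta\, a\nabla L\cdot\nabla L + \int \nabla k_\beta\cdot(a+\beta I)\nabla L - \int k_\beta\,\mathrm{div}\,b - \int k_\beta\, b\cdot\nabla L.
\]
The second term is controlled by $\tfrac14\int k_\beta\, a\nabla L\cdot\nabla L + C/(t-s)$ via Cauchy-Schwarz, the Gaussian identity $\int|\nabla k_\beta|^2/k_\beta = d/(2\beta(t-s))$ and $a \geq \sigma I$. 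The third is handled by hypothesis (c5) with test function $\psi = \sqrt{k_\beta}$, producing $\bigl|\int k_\beta\,\mathrm{div}\,b\bigr| \leq C\nu_\pm/(t-s) + h_\pm(s)$ with $h_\pm \in L^1(\mathbb R_+)$. For the fourth, split $b = \hat b + \tilde b$: the $\hat b$-piece is controlled by $\mathbf{MF}_\delta$ applied with test functions built from $\sqrt{k_\beta}$ and $\sqrt{k_\beta}\,|\nabla L|$ together with Cauchy-Schwarz, absorbing a fraction of $\int k_\beta\,a\nabla L\cdot\nabla L$ and leaving an integrable-in-$s$ remainder since $g \in L^2(\mathbb R_+)$; the $\tilde b$-piece, using $\tilde b = \nabla\cdot B$ with skew-symmetric $B \in L^\infty\,{\rm BMO}$, is rewritten by antisymmetry as $-\int \nabla k_\beta\cdot B\nabla L$ and estimated via Proposition \ref{lem_bmo_est} or the $\mathcal H_1$-${\rm BMO}$ duality through Proposition \ref{qian_xi_prop} applied to appropriate $k_\beta$-weighted products, again absorbed at an $O(1/(t-s))$ cost.

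Step (\emph{ii}) yields $G(t_s) \geq G(s^\ast) - \tfrac{d}{2}\log 2 - \|\varphi\|_{L^1([t_s,s^\ast])}$. For step (\emph{iii}), $t-s^\ast = (t-t_s)/2$ together with $|z-o| \leq \sqrt{t-t_s}$ keeps the Gaussian exponent $|z-o|^2/(t-s^\ast) \leq 2$ uniformly bounded; combining the upper Gaussian bound of Theorem \ref{thm0_} (which pins $\log u$ from above in the tails) with a short-scale reapplication of the same Nash inequality on $[s^\ast, t)$ — or, alternatively, with a dual $G$-function based at $z$ using the Aronson-type Gaussian profile of the heat kernel of $-\nabla\cdot a\cdot\nabla$ and absorbing the drift through the $\mathbf{MF}_\delta$ and ${\rm BMO}$ estimates already established — yields the required generic$\ast$ bound $G(s^\ast) \geq -\tfrac{d}{2}\log(t-s^\ast) - C$. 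Assembling (\emph{ii}) and (\emph{iii}) gives $G(t_s) \geq -\tilde Q(t-t_s) - \mathbb{C}$.

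The hardest step will be the $\tilde b$-contribution: the $\mathcal H_1$-${\rm BMO}$ duality applies most naturally to factors of Hardy form $|f|\nabla|f|$, but the pairing $\nabla k_\beta\cdot B\nabla L$ has neither factor of this type. Moreover, as stressed in the introduction, the weak form-bound class prohibits quadratic inequalities at the level of $\log u$. I expect the correct workaround to be a sophisticated regularization of the Nash $G$-function — replacing $\log u$ by $\log(u+\varepsilon)$ or a smoothed variant, deriving the differential inequality for the regularized $G_\varepsilon$, carrying out all $\mathcal H_1$-${\rm BMO}$ estimates at that level, and passing to $\varepsilon \downarrow 0$ only after all estimates are in hand.
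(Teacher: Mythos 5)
Your overall setup (Nash $G$-function in the backward time variable, regularization before using $\mathcal H_1$--${\rm BMO}$ duality) is in the right spirit, but the core of your plan does not survive contact with the actual estimates, in two places.

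First, the differential inequality you postulate in step (\emph{i}), $\tfrac{d}{ds}G(s) \leq \tfrac{d/2}{t-s}+\varphi(s)$ with $\varphi\in L^1$ \emph{independent of $G$}, is not obtainable from $\hat b\in\mathbf{MF}_\delta$ and $\tilde b\in L^\infty{\rm BMO}^{-1}$. When you estimate $\langle \Gamma\,\hat b\cdot\nabla\log V\rangle$ using multiplicative form-boundedness you are forced to test against $\varphi=\Gamma(-\log V)$, and the resulting bound contains the factor $\langle\varphi\rangle$, which is comparable to $-G$ itself; the same happens for the $\tilde b$-term, where the spectral-gap step leaves a term $\|B\|_{\rm BMO}\,(t-t_s)^{-1}\langle\varphi\rangle$. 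So the error terms are of size $\bigl(\tfrac{C}{t-s}+\tfrac{g(s)}{\sqrt{t-s}}+h(s)\bigr)\cdot(-G)$, not $O(1/(t-s))+L^1$, and the inequality cannot simply be integrated. This is precisely why the paper does not discard the quadratic term $\mathcal N=\langle\nabla\log V\cdot a\Gamma\cdot\nabla\log V\rangle$ after absorption: it introduces $Y=G_\varepsilon+\tilde Q$, multiplies by a weight $e^{\mu(\tau)}$ built from $\int g$, $\int g^2$, $\int h$, passes to the reciprocal $(Y-c)^{-1}$, and closes a Riccati-type inequality using the lower bound $\mathcal N_1\gtrsim \frac{1}{t-t_s}(c-Y)^2$, which comes from the Spectral Gap Inequality combined with the comparison $\Gamma\geq CU$ (a consequence of the upper Gaussian bound and the choice $\beta=2c_4$) and the Nash entropy estimate $-\langle U\log U\rangle\leq\tilde Q+\mathcal C$. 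The quadratic term is the engine of the proof, not a sacrificial absorber.

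Second, your step (\emph{iii}) is circular: the bound $G(s^\ast)\geq-\tfrac d2\log(t-s^\ast)-C$ is the statement of the proposition at the comparable scale $(t-t_s)/2$, and neither the upper Gaussian bound (which controls $\log u$ only from above) nor a ``short-scale reapplication of the same Nash inequality'' supplies a base case for it. In the paper no endpoint estimate of this kind is needed: integrating the reciprocal inequality over $[t_s,(t+t_s)/2]$ gives $c-Y(t_s)\leq C$ directly (Case a), while the complementary case, where $-Y$ is not large at some $\tau$, follows from monotonicity of $e^{\mu}(Y-c)^{-1}$ alone (Case b). Two further corrections to your sketch: the regularization must be $\log(\varepsilon\Gamma+u)$ with the Gaussian weight, not $\log(u+\varepsilon)$ --- the pointwise inequalities $-\log V\geq 1$ and $-\log V>\frac{|o-\cdot|^2}{4\beta(t-t_s)}$, which your constant regularization destroys at spatial infinity, are used in estimating both $A_0$ and $A_2$; and the Gaussian weight is frozen at scale $t-t_s$ rather than evolving with $s$, which is what makes the identities for $\Delta\Gamma$ and the spectral-gap constant uniform on the half-interval.
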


\begin{proof}[Proof of Proposition \ref{g1_thm_3}]

Fix $\varepsilon>0$ and define
\[ 
G_\varepsilon(\tau):=\langle k_\beta(t,o;t_s,\cdot) \log \big[\varepsilon k_\beta(t,o;t_s,\cdot)  +u(t,z;\tau,\cdot) \big] \rangle.
\]
Then
\[
G(\tau)=\inf_{\varepsilon>0} G_\varepsilon (\tau), \;\; \tau \in \big [t_s, \frac{t+t_s}{2} \big] \text{ and  } G(t_s)=\Theta_\beta (t,t_s,z).
\]
Below we are using the following shorthand:
\[
G_\varepsilon(\tau) \equiv \big \langle \Gamma \log \big[ \varepsilon\Gamma + U \big] \big \rangle \equiv \big \langle \Gamma \log \big[ \varepsilon\Gamma + U(\tau) \big] \big \rangle,
\]
where $\Gamma \equiv \Gamma_\beta \equiv k_\beta(t,o;t_s,\cdot), \; U \equiv U(\tau) \equiv u(t,z;\tau,\cdot).$

Also set
\[
V := c_0 (t-t_s)^{d/2} \big[\varepsilon \Gamma+ U \big], \; c_0 = (4 \pi c_4)^{d/2} e^{-1}\big[\varepsilon + c_3 e^{\frac{1}{4c_4}}\big]^{-1},
\]
where $c_3, c_4$ are the constant from the upper Gaussian bound for $U.$

If $\beta \geq 2c_4,$ then clearly  
\begin{equation}
\label{V_est}
V(\tau,y) \exp \frac{|o-y|^2}{4 \beta (t-t_s)} \leq e^{-1} \text{ for all } y \in \mathbb{R}^d, \; \varepsilon \in ]0, 1] \text{ and } \tau \in \big [t_s,\frac{t+t_s}{2} \big].
\end{equation}

Let us calculate $- \partial_\tau G_\varepsilon(\tau)$:
\begin{align*}
& - \partial_\tau G_\varepsilon(\tau)  = \bigg \langle \Gamma \frac{-\partial_\tau U}{\varepsilon\Gamma + U} \bigg \rangle = \bigg \langle \frac{\Gamma}{\varepsilon \Gamma + U} (\nabla \cdot a \cdot \nabla + \nabla \cdot (\hat{b}+\tilde{b}))U \bigg \rangle \\
& = \bigg \langle \nabla \log V \cdot a \Gamma \cdot \frac{\nabla U}{\varepsilon \Gamma+ U} \bigg \rangle - \bigg \langle \nabla \Gamma \cdot a\cdot \frac{\nabla U}{\varepsilon\Gamma + U}  \bigg \rangle + \bigg \langle \Gamma \frac{(\hat{b}+\tilde{b}) \cdot\nabla U}{\varepsilon\Gamma + U} \bigg \rangle + \bigg \langle \Gamma \frac{{\rm div}\hat{b}\;U}{\varepsilon\Gamma + U} \bigg \rangle \\
& = \big \langle \nabla \log V \cdot a \Gamma \cdot \nabla \log V \big \rangle -\bigg \langle \nabla \log V \cdot a \Gamma \cdot \frac{\varepsilon\nabla \Gamma}{\varepsilon \Gamma+ U} \bigg \rangle - \big \langle \nabla \Gamma \cdot a \cdot \nabla \log V \big \rangle \\
&+\bigg \langle \nabla \Gamma \cdot a \cdot \frac{\varepsilon\nabla \Gamma}{\varepsilon\Gamma + U}  \bigg \rangle + \big \langle \Gamma (\hat{b}+\tilde{b}) \cdot \nabla \log V \big \rangle - \bigg \langle \Gamma \frac{(\hat{b}+\tilde{b}) \cdot\varepsilon\nabla \Gamma}{\varepsilon\Gamma + U} \bigg \rangle + \bigg \langle \Gamma \frac{U{\rm div}\hat{b}}{\varepsilon\Gamma + U} \bigg \rangle.
\end{align*}
Setting $$\mathcal{N}(\tau):= \big \langle \nabla \log V \cdot a \Gamma \cdot \nabla \log V \big \rangle,$$ applying the quadratic inequality and estimating $\big \langle \Gamma \frac{U{\rm div}\hat{b}}{\varepsilon\Gamma + U} \big \rangle \geq -\big \langle \Gamma{\rm div}\hat{b}_-\big \rangle,$ we have
\begin{align*}
- \partial_\tau G_\varepsilon(\tau) &\geq \mathcal{N} - 2 \mathcal{N}^{1/2} \bigg \langle \nabla \Gamma \cdot \frac{a}{\Gamma} \cdot \nabla \Gamma \bigg \rangle^{1/2} + J\\
& \geq (1-\gamma)\mathcal{N}-\frac{\xi}{\gamma}\bigg\langle\frac{(\nabla\Gamma)^2}{\Gamma}\bigg\rangle + J \quad (0<\gamma<1), 
\end{align*}
where
\begin{align*}
J : =  \big \langle \Gamma \tilde{b} \cdot \nabla \log V \big \rangle-\bigg \langle \Gamma \frac{\tilde{b} \cdot \varepsilon\nabla \Gamma}{\varepsilon\Gamma + U} \bigg \rangle  + \big \langle \Gamma \hat{b} \cdot \nabla \log V \big \rangle- \bigg \langle \Gamma \frac{\hat{b} \cdot \varepsilon\nabla \Gamma}{\varepsilon\Gamma + U} \bigg \rangle-\big \langle \Gamma{\rm div}\hat{b}_-\big \rangle.
\end{align*}
Applying $\langle \frac{|\nabla \Gamma|^2}{\Gamma}\rangle = \xi \frac{d}{2\beta(t-t_s)}$, we arrive at 
\begin{align}
\label{G_deriv_est}
- \partial_\tau G_\varepsilon(\tau) \geq (1-\gamma)\mathcal{N}(\tau) - \frac{\xi}{\gamma} \frac{d}{2\beta(t-t_s)} + J.
\end{align}
Define
$$
Y(\tau):=G_\varepsilon(\tau) + \tilde{Q}(t-\tau).
$$
Our goal is to show that $Y$ is bounded from below by a constant.
Note that \eqref{G_deriv_est} can be rewritten as
\begin{equation}
\label{Y_deriv}
-\partial_\tau Y(\tau) \geq (1-\gamma)\mathcal N(\tau) -\frac{\xi}{\gamma}\frac{d}{2\beta(t-t_s)} + J.
\end{equation}
Here we have used that $-\partial_\tau \tilde{Q}(t-\tau)=\frac{d}{2(t-\tau)}>0$.
Multiplying \eqref{Y_deriv} by $e^{\mu(\tau)}$, where
$$
\mu(\tau):= - \frac{t-\tau}{t-t_s} - \frac{F(\tau)}{\sqrt{\beta (t-t_s)}}-P(\tau) - H(\tau)
$$
where $F(\tau):=\sqrt{2} \int_\tau^t g(r) d r$, $P(\tau):=\int_\tau^t g^2(r)dr$, $H(\tau):=\int_\tau^t h(r)dr$, we obtain
$$
-\partial_\tau \bigl[e^{\mu(\tau)}Y(\tau)\bigr] \geq e^{\mu(\tau)}\bigl[ - Y(\tau)\partial_\tau \mu(\tau) +  (1-\gamma)\mathcal N(\tau) -\frac{\xi}{\gamma}\frac{d}{2\beta(t-t_s)} + J \bigr].
$$
Note that, due to our assumptions on $g$, $g^2$ and $h$, the function $\mu$ is uniformly bounded in variables $\tau \in [t_s,(t+t_s)/2]$ and $0 \leq s<t<\infty$.

\begin{lemma}
\label{cl1}
Let $c>\log (1+c_3)$ with $c_3$ from  $u(t,x;\tau,\cdot) \leq c_3 (t-\tau)^{-d/2}.$ Then, for all $\varepsilon>0$ sufficiently small,
$$Y(\tau) < c$$
for all $\tau \in [t_s, \frac{t+t_s}{2}]$.
\end{lemma}
\begin{proof}
Indeed, for $\varepsilon\leq (4\pi \beta)^\frac{d}{2}$ sufficiently small,
\begin{align*}
G_\varepsilon(\tau) & = \big \langle \Gamma \log (\varepsilon \Gamma + U) \big \rangle \\
& \leq \big \langle \Gamma \big \rangle \log \big[(1+c_3) (t-\tau)^{-d/2}] \\
& < -\tilde{Q}(t-\tau) + \log (1+c_3).
\end{align*}
\end{proof}

From now on, let $c$ and $\varepsilon$ be as in Lemma \ref{cl1}.
Then we obtain from \eqref{Y_deriv}
\begin{equation}
\partial_\tau \big(e^{\mu(\tau)} (Y(\tau) - c) \big)^{-1}  \geq \big[\gamma \mathcal{N}(\tau)  +\mathcal{M}(\tau)\big]e^{-\mu(\tau)} (Y(\tau)-c)^{-2},
\label{ast2}
\end{equation}
(division by zero is ruled out by Lemma \ref{cl1}),
where
\begin{align*}
\mathcal{M}(\tau) & :=- ( Y(\tau) - c) \partial_\tau \mu(\tau)+  (1-2\gamma) \mathcal{N}(\tau)  -\frac{\xi}{\gamma}\frac{d}{2\beta(t-t_s)} + J.
\end{align*}

\begin{lemma}
\label{cl2}
$\mathcal{M}(\tau) \geq 0$
for all $\tau \in \big[t_s, \frac{t+t_s}{2}\big]$, provided that $\nu_-<\sigma$, $\gamma$ is sufficiently small and $c$ is sufficiently large.
\end{lemma}

We prove Lemma \ref{cl2} below. 

Now, taking Lemma \ref{cl2} for granted, we complete the proof of  Proposition \ref{g1_thm_3}. Lemma \ref{cl2} and \eqref{ast2} give 
\begin{equation}
\partial_\tau \big(e^{\mu(\tau)} (Y(\tau) -c) \big)^{-1}  \geq \gamma\sigma \mathcal N_1(\tau) e^{-\mu(\tau)} (Y(\tau)-c)^{-2},
\label{ineq3}
\end{equation}
where, recall, $\mathcal N_1(\tau):=\langle \Gamma |\nabla \log V|^2\rangle$.
By the Spectral Gap Inequality (see e.g.\,\cite[Sect.\,2]{N}),
\begin{align*}
\mathcal{N}_1 & \geq \frac{1}{2 \beta (t-t_s)} \big \langle \Gamma |\log V -\langle \Gamma \log V \rangle |^2 \big \rangle \\
& = \frac{1}{2 \beta (t-t_s)} \big \langle \Gamma |\log \big[\varepsilon \Gamma + U \big] -\langle \Gamma \log \big[\varepsilon \Gamma + U \big] \rangle |^2 \big \rangle \\
& \equiv \frac{1}{2 \beta (t-t_s)} \big \langle \Gamma |\log \big[\varepsilon \Gamma + U \big] - G_\varepsilon |^2 \big \rangle.
\end{align*}
In turn,
$
\Gamma \geq CU
$ for a generic$\ast$ constant $C$.
(Indeed, $\frac{1}{2}|o-\cdot|^2\leq |z-\cdot|^2 + |o-z|^2$. Clearly, $\frac{1}{t-t_s} \leq \frac{1}{t-\tau} \leq \frac{2}{t-t_s}$ combined with $|z-o| \leq \sqrt{t-t_s}$ implies that $-\frac{|z-\cdot|^2}{4c_4(t-\tau)}\leq -\frac{|o-\cdot|^2}{8c_4(t-t_s)} + \frac{|o-z|^2}{2c_4(t-t_s)}$. Thus, if $\beta\geq 2c_4$, then 
\[
k_{c_4} (t,z;\tau, \cdot) \leq \big(\frac{2\beta}{c_4}\big)^\frac{d}{2} e^\frac{1}{2c_4} k_\beta (t,o;t_s,\cdot).
\]
Therefore, by $U \leq c_3 k_{c_4}(t,z;\tau, \cdot)$, see \eqref{ub_11}, and $\beta = 2c_4$, we have the required inequality $\Gamma \geq C U$ with $C^{-1}=c_3 2^d e^\frac{1}{\beta}$.)
Hence
\[
\mathcal{N}_1(\tau) \geq \frac{C}{2 \beta (t-t_s)} \big \langle U |\log \big[\varepsilon \Gamma + U \big] - G_\varepsilon |^2 \big \rangle,
\]
and so, by $\langle U \rangle = 1,$
\[
\mathcal{N}_1(\tau) \geq \frac{C}{2 \beta (t-t_s)} \big \langle U |\log \big[\varepsilon \Gamma + U \big] - G_\varepsilon | \big \rangle^2.
\]
Now,
\begin{align*}
\big \langle U |\log \big[\varepsilon \Gamma + U \big] - G_\varepsilon | \big \rangle & \geq \big \langle U \log \big[\varepsilon \Gamma + U \big] \big \rangle - G_\varepsilon \big \langle U \big \rangle \\
& \geq \big \langle U \log U \big \rangle - G_\varepsilon \big \langle U \big \rangle \\
& \geq - G_\varepsilon(\tau) - \tilde{Q}(t-\tau) - \mathcal{C} \\
& \equiv - Y(\tau) - \mathcal{C}.
\end{align*}
Here we again have used identity $\langle U \rangle = 1$ and the Nash entropy estimate $-\big \langle U \log U \big \rangle \leq \tilde{Q}(t-\tau) + \mathcal{C}.$
(The latter follows from  $e^\frac{Q}{d}\leq CM$, where $Q:=-\big \langle U \log U \big \rangle$, and $M(t,\tau)=\langle|z-\cdot|U\rangle\leq C\sqrt{t-\tau}$, which is a consequence of the upper bound $U \leq c_3 k_{c_4}(t,z;\tau, \cdot)$. The inequality $e^\frac{Q}{d}\leq CM$, in turn, follows from $\langle u \rangle = 1$ and $u \log u \geq - \alpha u -e^{-1-\alpha}$ for all real $\alpha.$)

\smallskip

\textit{Case} (a): For all $\tau \in [t_s, (t+t_s)/2]$, $-Y(\tau) -c -2 \mathcal{C} \geq 0$. Here $c$ is from \eqref{ineq3}. 

\smallskip

Then $- Y(\tau) - \mathcal{C} \geq \frac{1}{2} (-Y(\tau) +c ) >c+ \mathcal{C} > 0$ and hence
\[
\mathcal{N}_1(\tau) \geq  \frac{C}{8 \beta (t-t_s)} \big(-Y(\tau) + c \big)^2.
\]
Thus, by \eqref{ineq3}, $$\big( c - Y(t_s) \big)^{-1} \geq \frac{\gamma\sigma C}{8 (t-t_s)} e^{\mu(t_s)} \int_{t_s}^{(t+t_s)/2} e^{- \mu(\tau)} d \tau \geq \frac{\gamma\sigma C}{8 (t-t_s)} \int_{t_s}^{(t+t_s)/2} d \tau,$$ and so $c - Y(t_s) \leq \frac{16}{\gamma\sigma C } = \frac{2^{d +4} c_3}{\gamma\sigma}e^\frac{1}{\beta},$ or $$G_\varepsilon (t_s) \geq -\tilde{Q}(t-t_s) +c - \frac{2^{d +4} c_3}{\gamma\sigma}e^\frac{1}{\beta}.$$

\textit{Case} (b): For some $\tau \in [t_s, (t+t_s)/2]$, $-Y(\tau) -c -2 \mathcal{C} < 0$.

\smallskip

By \eqref{ineq3}, $$\big(e^{\mu(\tau)} (Y(\tau) -c) \big)^{-1} \geq \big(e^{\mu(t_s)} (Y(t_s) -c) \big)^{-1},$$ or $c -Y(t_s) \leq e^{\mu(\tau)-\mu(t_s)} (c - Y(\tau)).$ Therefore,
$c-Y(t_s) \leq e^{\mu(\tau)-\mu(t_s)} 2 (c + \mathcal{C}) \leq e^{-\mu(t_s)} 2 (c + \mathcal{C}),$ or $$G_\varepsilon (t_s) \geq -\tilde{Q}(t-t_s) +c - 2 (c + \mathcal{C})e^{1+\frac{d+2}{4\sqrt{\beta}}c_\delta+\|g\|^2_{L^2(\mathbb R_+)} + \|h\|_{L^1(\mathbb R_+)}}.$$

This ends the proof of  Proposition \ref{g1_thm_3}.
\end{proof}

\subsection{Proof of Lemma \ref{cl2}}

The main task is to estimate $J= \langle \Gamma \tilde{b} \cdot \nabla \log V  \rangle- \langle \Gamma \frac{\tilde{b} \cdot \varepsilon\nabla \Gamma}{\varepsilon\Gamma + U}  \rangle  - \langle \Gamma \frac{\hat{b} \cdot \varepsilon\nabla \Gamma}{\varepsilon\Gamma + U}  \rangle-\langle \Gamma({\rm div}\hat{b})_- \rangle + \langle \Gamma \hat{b} \cdot \nabla \log V  \rangle$ in the definition of $\mathcal M$.

1. Term ``$\langle \Gamma \tilde{b}\cdot\nabla\log V\rangle$''. There exists a bounded smooth skew-symmetric matrix $B \in L^\infty\,{\rm BMO}$ such that $\tilde{b}=\nabla B$. 
 We have
\begin{align*}
\langle \Gamma \tilde{b}\cdot\nabla\log V\rangle & =-\langle \tilde{b}\cdot \nabla\Gamma,\log V\rangle \\
& =-2\langle \tilde{b}\cdot \nabla\sqrt{\Gamma},\sqrt{\Gamma}\log V\rangle.
\end{align*}
Hence, by Proposition \ref{lem_bmo_est},
\begin{align*}
&|\langle \Gamma \tilde{b}\cdot\nabla\log V\rangle|\leq 2\|B\|_{\rm {BMO}}\|\nabla\sqrt{\Gamma}\|_2\|\nabla(\sqrt{\Gamma}\log V)\|_2,
\end{align*}
where
$$
\|\nabla\sqrt{\Gamma}\|_2=\sqrt{\frac{d}{8\beta}\frac{1}{t-t_s}},
$$
\begin{align*}
\|\nabla(\sqrt{\Gamma}\log V)\|_2^2 & =\big\langle \big((\nabla\sqrt{\Gamma})\log V  + \sqrt{\Gamma}\nabla\log V\big)^2\big\rangle \\
&=:\big\langle(p+q)^2\big\rangle=\big\langle p^2+2p\cdot q+q^2\big\rangle.
\end{align*}
Note that $\langle 2p\cdot q\rangle=\frac{1}{2}\langle\nabla\Gamma\cdot\nabla\log^2V\rangle=-\frac{1}{2}\langle \Delta\Gamma,\log^2 V\rangle$. Using the equality $\Delta\Gamma=\frac{(\nabla\Gamma)^2}{\Gamma}-\frac{d}{2\beta (t-t_s)}\Gamma$ we have $\langle 2p\cdot q\rangle=-2\langle p^2\rangle+\frac{d}{4\beta(t-t_s)}\langle\Gamma\log^2 V\rangle$. Thus, 
\begin{align*}
\|\nabla(\sqrt{\Gamma}\log V)\|_2 & \leq \|q\|_2+\sqrt{\frac{d}{4\beta (t-t_s)}}\|\sqrt{\Gamma}\log V\|_2 \\
& = \mathcal N^{\frac{1}{2}}_1(\tau) + \sqrt{\frac{d}{4\beta (t-t_s)}}\|\sqrt{\Gamma}\log V\|_2,
\end{align*}
where $\mathcal N_1(\tau):=\langle \Gamma |\nabla \log V|^2\rangle$ $(\leq \sigma^{-1}\mathcal N(\tau))$.
 Therefore, 
$$
|\langle \Gamma \tilde{b}\cdot\nabla\log V\rangle|\leq \frac{\gamma}{2}\mathcal N_1 + 
\|B\|_{\rm BMO}^2\frac{1}{\gamma}\frac{d}{4\beta(t-t_s)} + \|B\|_{\rm BMO}\frac{d}{2\sqrt{2}\beta(t-t_s)}\|\sqrt{\Gamma}\log V\|_2.
$$ 
Let us estimate the third term in the RHS of this inequality. We have
\begin{align*}
\frac{1}{t-t_s}\|\sqrt{\Gamma}\log V\|^2_2 & = \frac{1}{t-t_s}\langle \Gamma (\log V - \langle \Gamma \log V\rangle)^2 \rangle + \frac{1}{t-t_s}\langle \Gamma \log V\rangle^2 \\
& (\text{we are applying the Spectral Gap Inequality in the first term}) \\
& \leq 2\beta \mathcal N_1(\tau) + \frac{1}{t-t_s}\langle \Gamma \log V \rangle^2,
 \end{align*}
so
$$
\frac{1}{\sqrt{t-t_s}}\|\sqrt{\Gamma}\log V\|_2 \leq \sqrt{2\beta}\sqrt{\mathcal N_1} + \frac{\langle \Gamma (-\log V) \rangle}{\sqrt{t-t_s}} \qquad (\text{we use $-\log V \geq 1$, see \eqref{V_est}}).
$$
Setting $\varphi:=\Gamma (-\log V)$, we arrive at
\begin{align*}
|\langle \Gamma \tilde{b}\cdot\nabla\log V\rangle| \leq \gamma\mathcal N_1(\tau) +\|B\|_{\rm BMO}^2\frac{d^2}{8 \gamma} \frac{1}{2\beta(t-t_s)} +  \|B\|_{\rm BMO}\frac{d}{2\sqrt{2}\beta(t-t_s)}\langle \varphi \rangle.
\end{align*}

2.~Term ``$\langle \Gamma \frac{\tilde{b} \cdot \varepsilon\nabla \Gamma}{\varepsilon \Gamma + U} \rangle$''. By Proposition \ref{lem_bmo_est},
$$
|\bigg \langle \Gamma \frac{\tilde{b} \cdot \varepsilon\nabla \Gamma}{\varepsilon \Gamma + U} \bigg \rangle|= 2|\bigg \langle \tilde{b}\cdot\nabla\sqrt{\Gamma},\frac{\varepsilon\Gamma^\frac{3}{2}}{\varepsilon\Gamma+U}\bigg \rangle|\leq 2\|B\|_{{\rm BMO}}\|\nabla\sqrt{\Gamma}\|_2\|\nabla\frac{\varepsilon\Gamma^\frac{3}{2}}{\varepsilon \Gamma + U}\|_2,
$$
and
\begin{align*}
\big\|\nabla\frac{\varepsilon\Gamma^\frac{3}{2}}{\varepsilon \Gamma + U}\big\|_2 &= \big\|\frac{3\varepsilon\Gamma \nabla\sqrt{\Gamma}}{\varepsilon \Gamma + U} - \frac{\varepsilon\Gamma}{\varepsilon \Gamma + U}\sqrt{\Gamma}\nabla\log V\big\|_2\\
&\leq 3\|\nabla\sqrt{\Gamma}\|_2+\|\sqrt{\Gamma}\nabla\log V\|_2 \leq \frac{3}{2}\frac{\sqrt{d}}{\sqrt{2\beta(t-t_s)}}+\mathcal N_1^{\frac{1}{2}}(\tau).
\end{align*}
Thus, upon applying quadratic inequality, we have
$$
|\bigg \langle \Gamma \frac{\tilde{b} \cdot \varepsilon\nabla \Gamma}{\varepsilon \Gamma + U} \bigg \rangle| \leq \gamma \mathcal N_1(\tau) + \frac{1}{\gamma}\|B\|_{\rm BMO}^2\frac{d}{8\beta(t-t_s)} + \|B\|_{\rm BMO}\frac{d}{2\beta(t-t_s)}.
$$

\begin{remark}
\label{P_remark}
Instead of $\tilde{b}$ above we could have considered $\tilde{b}+\check{b}$ with $\check{b}=\nabla P$, $P \in [L^\infty(\mathbb R_+ \times \mathbb R^d)]^{d \times d}$, with ${\rm div\,}\check{b}$ form-bounded and in the Kato class. 
Indeed, we could modify Step 1 as follows:
\begin{align*}
|\langle \Gamma \check{b}\cdot\nabla\log V\rangle| & \leq \langle ({\rm div\,}\check{b})_-, -\Gamma \log V\rangle +2 \|P\|_\infty \|\nabla^2\sqrt{\Gamma}\|_2 \|\sqrt{\Gamma}\log V \|_2 \\
& + 2\|P\|_\infty\|\nabla \sqrt{\Gamma}\|_2\|\nabla(\sqrt{\Gamma}\log V)\|_2.
\end{align*}
We estimate the first term in the same way as $A_{\rm div}(\tau)$ below, in the last two terms use $\|\nabla^2\sqrt{\Gamma}\|_2 \leq C(t-t_s)^{-1}$, $\|\nabla \sqrt{\Gamma}\|_2 \leq C(t-t_s)^{-\frac{1}{2}}$ so that we can argue as in Step 1 above. Step 2 is modified similarly.
\end{remark}

3.~Term ``$\langle \varepsilon \Gamma \frac{\hat{b} \cdot\nabla \Gamma}{\varepsilon\Gamma + U} \rangle$''. We have
\begin{align*}
\left |\bigg \langle \varepsilon \Gamma \frac{\hat{b} \cdot\nabla \Gamma}{\varepsilon\Gamma + U} \bigg \rangle\right | &  \leq \langle |\hat{b}| |\nabla \Gamma| \rangle \leq \frac{\sqrt{2}}{\sqrt{\beta(t-t_s)}}\langle |\hat{b}|\Gamma_{2\beta}\rangle,
\end{align*}
where, using that $\hat{b} \in \mathbf{MF}_\delta$ with multiplicative bound $\delta \equiv \delta_{\hat{b}}$ and function $g \equiv g_{\hat{\delta}}$, we estimate:
\begin{align*}
\langle |\hat{b}|\Gamma_{2\beta}\rangle & \leq \delta\|\nabla\sqrt{\Gamma_{2\beta}}\|_2\|\sqrt{\Gamma_{2\beta}}\|_2 + g(\tau)\\
& = \frac{\delta}{2}\frac{\sqrt{d}}{\sqrt{2\beta(t-t_s)}}+g(\tau).
\end{align*} 
Thus,
$$
\left |\bigg \langle \varepsilon \Gamma \frac{\hat{b} \cdot\nabla \Gamma}{\varepsilon\Gamma + U} \bigg \rangle\right | \leq \frac{\delta}{2}\frac{\sqrt{d}}{\beta(t-t_s)}+\frac{\sqrt{2}g(\tau)}{\sqrt{\beta(t-t_s)}}.
$$

\smallskip

4. Term ``$\langle \Gamma ({\rm div\,}\hat{b})_-  \rangle$''. Since $({\rm div\,}\hat{b})_-$ is form-bounded with form-bound $\nu \equiv \nu_-$ and function $h \equiv h_\nu$, we have
\begin{align*}
\langle \Gamma ({\rm div\,}\hat{b})_-  \rangle & \leq \frac{\nu}{4}\frac{d}{2 \beta} \frac{1}{t-t_s} + h(\tau).
\end{align*}

\medskip

5.~Term ``$\big \langle \Gamma \hat{b} \cdot \nabla \log V \big \rangle$''. This is the most difficult term. Using $\hat{b} \in \mathbf{MF}_\delta$ and the form-boundedness of ${\rm div\,}\hat{b}_-$, we will prove that there exists a constant $\tilde{c}$ such that
\begin{align}
 |\big \langle \Gamma \hat{b} \cdot \nabla \log V \big \rangle|  & \leq (2\gamma + \nu \sigma^{-1} )\mathcal N \notag \\
&+ \bigg(\frac{c^*_0}{t-t_s} + \frac{g(\tau)}{2 \sqrt{\beta (t-t_s)}}  + \frac{c^*_2}{t-t_s} + \frac{d \;g(\tau)}{4 \sqrt{\beta (t-t_s)}} + \frac{\nu d}{4 \beta (t-t_s)} + h(\tau)\bigg) (-Y(\tau) - c) \notag \\
& + \frac{c^*_1 + c_3^*+ \frac{\nu d}{16 \beta}}{t-t_s}  + \frac{d}{4\gamma \sigma}g^2(\tau). \label{AAA_est}
\end{align}

Proof of \eqref{AAA_est}.
We estimate
\begin{align*}
|\big \langle \Gamma \hat{b} \cdot \nabla \log V \big \rangle| & \leq |\big \langle \hat{b} \cdot \nabla \Gamma , -\log V \big \rangle|  + \langle \Gamma ({\rm div}\hat{b})_-, -\log V \rangle \\
& \leq \frac{1}{\sqrt{\beta (t-t_s)}} \big \langle |\hat{b}| \Gamma, -\log V \big \rangle^{1/2} \bigg \langle |\hat{b}| \frac{|o-\cdot|^2}{4 \beta (t-t_s)} \Gamma, -\log V \bigg \rangle^{1/2} + \langle \Gamma ({\rm div}\hat{b})_-, -\log V \rangle\\
& =: \frac{1}{\sqrt{\beta (t-t_s)}} A_0^{1/2} A_2^{1/2} + A_{{\rm div}} \\
& \leq \frac{1}{2 \sqrt{\beta (t-t_s)}}(A_0+A_2)+A_{{\rm div}},
\end{align*}
where
$$
A_0(\tau):= \big \langle |\hat{b}| \Gamma (-\log V) \big \rangle=:\langle |\hat{b}|\varphi \rangle,
$$
$$
A_2(\tau):=\bigg \langle |\hat{b}| \frac{|o-\cdot|^2}{4 \beta (t-t_s)} \Gamma (-\log V) \bigg \rangle=:\langle |\hat{b}|\psi\rangle,
$$
$$
A_{{\rm div}}(\tau):=\langle  ({\rm div}\hat{b})_-, \Gamma (-\log V) \rangle=:\langle ({\rm div}\hat{b})_-\varphi\rangle.
$$

 Let us estimate $A_0(\tau).$
\begin{align*}
A_0(\tau) = & \big \langle |b| \Gamma (-\log V) \big \rangle \equiv \langle |b| \varphi \rangle \\
& \leq \delta \|\nabla \sqrt{\varphi} \|_2 \langle \varphi \rangle^{1/2} + g \langle \varphi \rangle \\
& =\frac{1}{2} \delta \big \langle (\nabla \varphi)^2 /\varphi \big \rangle^{1/2} \langle \varphi \rangle^{1/2} + g \langle \varphi \rangle,
\end{align*}
\[
\nabla \varphi = \bigg( \frac{\nabla \Gamma}{\Gamma} +\frac{\nabla \log V}{\log V} \bigg) \varphi, \qquad \frac{(\nabla \varphi)^2}{\varphi} =  \bigg( \frac{\nabla \Gamma}{\Gamma} +\frac{\nabla (-\log V)}{-\log V} \bigg)^2 \varphi,
\]
\begin{align*}
\frac{(\nabla \varphi)^2}{\varphi} & \leq 2 \bigg( \frac{(\nabla \Gamma)^2}{\Gamma} (-\log V) +\frac{(\nabla \log V)^2}{-\log V} \Gamma \bigg) \\
& \leq 2 \bigg( \frac{|o-\cdot|^2}{(2\beta(t-t_s))^2}\Gamma (-\log V) + \Gamma (\nabla \log V)^2 \bigg) \;\;(\text{ because } - \log V >1 ).
\end{align*}
By the equality $\frac{|o-\cdot|^2}{4 \beta (t-t_s)} \Gamma = \beta (t-t_s) \Delta \Gamma +\frac{d}{2} \Gamma,$
\begin{align*}
\frac{1}{2}\bigg \langle \frac{(\nabla \varphi)^2}{\varphi} \bigg \rangle & \leq \big \langle \Gamma (\nabla \log V )^2 \big \rangle + \frac{1}{\beta (t-t_s)}\big \langle \big(\beta (t-t_s) \Delta \Gamma + \frac{d}{2} \Gamma \big)(-\log V) \big \rangle \\
& \leq \sigma^{-1} \mathcal{N} +\big \langle \nabla \Gamma, \nabla \log V \big \rangle + \frac{d}{2 \beta(t-t_s) } \langle \varphi \rangle \\
& \leq 2 \sigma^{-1} \mathcal{N} +\frac{1}{4} \bigg\langle \frac{(\nabla \Gamma)^2}{\Gamma} \bigg \rangle + \frac{d}{2 \beta (t-t_s)} \langle \varphi \rangle\\
& \leq 2 \sigma^{-1} \mathcal{N} + \frac{d}{8 \beta (t-t_s)}+ \frac{d}{2 \beta (t-t_s)} \langle \varphi \rangle.
\end{align*}
Therefore, by inequalities $(B+C+D)^{1/2}\leq (B+C)^{1/2}+D^{1/2}$ and $E^{1/2}(B+C)^{1/2}M^{1/2} \leq (B+C)\varepsilon + (4 \varepsilon)^{-1} E M$ for positive numbers with $\varepsilon = \sigma\gamma/2,$
\begin{align*}
\frac{A_0(\tau)}{2 \sqrt{\beta (t-t_s)}} & \leq \frac{\delta}{4 \sqrt{\beta (t-t_s)}} \bigg( 2\sigma^{-1} \mathcal{N}(\tau)+ \frac{d}{8 \beta (t-t_s)}+ \frac{d}{2 \beta (t-t_s)} \langle \varphi \rangle \bigg)^{1/2}\langle \varphi \rangle^{1/2} + \frac{g(\tau)}{2 \sqrt{\beta (t-t_s)}}\langle \varphi \rangle \\
& \leq \gamma \mathcal{N}(\tau) + \bigg(\frac{c^*_0}{t-t_s} + \frac{g(\tau)}{2 \sqrt{\beta (t-t_s)}} \bigg) \langle \varphi \rangle + \frac{c^*_1}{t-t_s},
\end{align*}
where $c^*_i =c^*_i(d,\sigma, \xi, \delta, \gamma) > 0$, $i=0,1$.

Analogous calculations show that there are constants $c^*_i =c^*_i(d,\sigma, \xi, \delta, \gamma) > 0,$ $i=2,3$, such that
\[
\frac{A_2(\tau)}{2 \sqrt{\beta (t-t_s)}} \leq \gamma \mathcal{N}(\tau) + \bigg(\frac{c^*_2}{t-t_s} + \frac{d \;g(\tau)}{4 \sqrt{\beta (t-t_s)}} \bigg) \langle \varphi \rangle + \frac{c^*_3}{t-t_s} +\frac{d}{4\gamma\sigma}g^2(\tau).\label{A2_est}\tag{$\bullet$}
\]

Indeed, $A_2(\tau)\leq \frac{1}{2}\delta\big\langle\frac{(\nabla\psi)^2}{\psi}\big\rangle^\frac{1}{2}\langle \psi\rangle^\frac{1}{2} + g(\tau)\langle \psi\rangle$, $\psi:=\frac{|o-\cdot|^2}{4\beta(t-t_s)}\Gamma(-\log V)$.
\[
\nabla\psi=\frac{\cdot-o}{2\beta(t-t_s)}\Gamma(-\log V)+\frac{|o-\cdot|^2}{4\beta(t-t_s)}\nabla\Gamma(-\log V)+\frac{|o-\cdot|^2}{4\beta(t-t_s)}\Gamma\nabla(-\log V),
\]
\[
(\nabla\psi)^2\leq 3\bigg(\frac{|o-\cdot|^2}{4\beta^2(t-t_s)^2}\Gamma^2(\log V)^2+\frac{|o-\cdot|^4}{16\beta^2(t-t_s)^2}(\nabla\Gamma)^2(\log V)^2 + \frac{|o-\cdot|^4}{16\beta^2(t-t_s)^2}\Gamma^2(\nabla\log V)^2 \bigg),
\]
\[
\frac{(\nabla\psi)^2}{\psi}\leq 3\bigg( \frac{1}{\beta(t-t_s)}\Gamma (-\log V) +\frac{|o-\cdot|^2}{4\beta(t-t_s)}\frac{(\nabla\Gamma)^2}{\Gamma}(-\log V)+ \frac{|o-\cdot|^2}{4\beta(t-t_s)}\Gamma\frac{(\nabla\log V)^2}{-\log V}\bigg),
\]
Using inequality $-\log V  > \frac{|o-\cdot|^2}{4 \beta (t-t_s)}$ and equality $\frac{(\nabla\Gamma)^2}{\Gamma} =\Delta\Gamma +\frac{d}{2\beta(t-t_s)}\Gamma$ we have
\[
\bigg \langle\frac{(\nabla\psi)^2}{\psi}\bigg\rangle\leq 3\bigg( \frac{1}{\beta(t-t_s)}\langle\varphi\rangle+\frac{d}{2\beta(t-t_s)}\langle\psi\rangle+ \bigg\langle\frac{|o-\cdot|^2}{4\beta(t-t_s)}\Delta\Gamma,(-\log V)\bigg\rangle+\langle\Gamma(\nabla\log V)^2\rangle\bigg),
\]
\begin{align*}
\bigg\langle\frac{|o-\cdot|^2}{4\beta(t-t_s)}\Delta\Gamma,(-\log V)\bigg\rangle &=\bigg\langle\frac{o-\cdot}{2\beta(t-t_s)}\cdot\nabla\Gamma,-\log V\bigg\rangle - \bigg\langle\frac{|o-\cdot|^2}{4\beta(t-t_s)}\nabla\Gamma,\nabla(-\log V)\bigg\rangle\\
&\leq \bigg\langle\frac{|o-\cdot|^2}{4\beta^2(t-t_s)^2}\Gamma(-\log V)\bigg\rangle + \bigg\langle \frac{|o-\cdot|^3}{8\beta^2(t-t_s)^2}\Gamma^\frac{1}{2},
\Gamma^\frac{1}{2}|\nabla\log V|\bigg\rangle\\
&\leq \frac{1}{\beta(t-t_s)}\langle\psi\rangle + \frac{C(d)}{\sqrt{\beta(t-t_s)}}(\sigma^{-1}\mathcal{N})^\frac{1}{2}
\end{align*}
Thus
\[
\bigg \langle\frac{(\nabla\psi)^2}{\psi}\bigg\rangle\leq 3\bigg( \frac{1}{\beta(t-t_s)}\langle\varphi\rangle+\frac{d+2}{2\beta(t-t_s)}\langle\psi\rangle + 2\sigma^{-1}\mathcal{N} + \frac{C^2(d)}{4\beta(t-t_s)}\bigg),
\]
Also $\langle\psi\rangle=\beta(t-t_s)\big\langle\frac{(\nabla\Gamma)^2}{\Gamma}(-\log V)\big\rangle=\frac{d}{2}\langle\varphi\rangle+\beta(t-t_s)\langle\Delta\Gamma,-\log V\rangle$ and so $$\langle\psi\rangle\leq \frac{d}{2}\langle\varphi\rangle+\bigg(\frac{d}{2}\beta(t-t_s)\bigg)^\frac{1}{2}(\sigma^{-1}\mathcal{N})^\frac{1}{2},$$
\[
\bigg \langle\frac{(\nabla\psi)^2}{\psi}\bigg\rangle\leq 3\bigg(3\sigma^{-1}\mathcal{N} + \frac{(d+2)^2}{4\beta(t-t_s)}\langle\varphi\rangle +  \frac{C'(d)}{4\beta(t-t_s)} \bigg),
\]
and $(\bullet)$ follows. 

\begin{remark}
Estimate \eqref{A2_est} requires $g \in L^2(\mathbb R_+)$. Everywhere else in the proof it suffices to assume \eqref{g_cond2}. 
\end{remark}

Finally, since $({\rm div\,}\hat{b})_-$ is form-bounded,
\begin{align*}
A_{{\rm div}}(\tau) & \leq \frac{\nu}{4}\left\langle \frac{(\nabla \varphi)^2}{\varphi} \right\rangle  + h(\tau)\langle \varphi \rangle \\
& \leq \nu\sigma^{-1} \mathcal{N} + \bigg(\frac{\nu d}{4 \beta (t-t_s)} +  h(\tau)\bigg) \langle \varphi \rangle + \frac{\nu d}{16 \beta (t-t_s)}
\end{align*}
Therefore,
\begin{align*}
& \frac{1}{2 \sqrt{\beta (t-t_s)}}(A_0+A_2)+A_{{\rm div}}  \\
& \leq (2\gamma + \nu\sigma^{-1} ) \mathcal N  + \bigg(\frac{c^*_0}{t-t_s} + \frac{g(\tau)}{2 \sqrt{\beta (t-t_s)}}  + \frac{c^*_2}{t-t_s} + \frac{d \;g(\tau)}{4 \sqrt{\beta (t-t_s)}} + \frac{\nu d}{4 \beta (t-t_s)} + h(\tau)\bigg) \langle \varphi \rangle \\
& + \frac{c^*_1 + c_3^*+ \frac{\nu d}{16 \beta}}{t-t_s}  + \frac{d}{4\gamma \sigma}g^2(\tau).
\end{align*}
The latter gives \eqref{AAA_est} upon noticing that $\langle \varphi \rangle = -Y(\tau) + \frac{d}{2}\log \frac{t-\tau}{t-t_s} - \log c_0 \leq -Y(\tau)-\tilde{c}$ for $\tilde{c}=\frac{d}{2}\log 2 +\log c_0$. 
This ends the proof of 5.

\medskip

We are in position to complete the proof of Lemma \ref{cl2}.
By estimates 1-5,
\begin{align*}
J & \geq -\big[2\gamma + (2\gamma+ \nu)\sigma^{-1}\big] \mathcal N(\tau) - \frac{C_1}{t-t_s} - \frac{(d+2)g(\tau)}{4\sqrt{\beta(t-t_s)}} - \frac{d}{4\gamma \sigma} g^2(\tau) - h(\tau) \\
& - \biggl(\frac{C_2}{t-t_s}+\frac{\sqrt{2} g(\tau)}{\sqrt{\beta(t-t_s)}} + h(\tau) \biggr)(-Y(\tau) - \tilde{c}).
\end{align*}
 Therefore, 
\begin{align*}
\mathcal{M}(\tau) & \geq (1-(4+2 \sigma^{-1}) \gamma - \sigma^{-1}\nu) \mathcal{N}(\tau) - ( Y(\tau) - c) \partial_\tau \mu(\tau) \\
& - \frac{C_1}{t-t_s} - \frac{(d+2)g(\tau)}{4\sqrt{\beta(t-t_s)}} - \frac{d}{4\gamma \sigma} g^2(\tau) - h(\tau) - \biggl(\frac{C_2}{t-t_s}+\frac{\sqrt{2} g(\tau)}{\sqrt{\beta(t-t_s)}} + h(\tau) \biggr)(-Y(\tau) - \tilde{c}), 
\end{align*}
where, recalling that $\nu<\sigma$, we select $\gamma>0$ sufficiently small to keep the coefficient of $\mathcal N(\tau)$ non-negative.
Next, recall that $$\partial_\tau \mu(\tau)=\frac{1}{t-t_s}+ \frac{\sqrt{2} g(\tau)}{\sqrt{\beta(t-t_s)}}+ g^2(\tau) +  h(\tau).$$
It is now easily seen that we can select $c$ sufficiently large so that $\mathcal M(\tau) \geq 0$.
The proof of Lemma \ref{cl2} is completed.

\subsection{$\hat{G}$-function for $-\nabla \cdot a \cdot \nabla + \nabla \cdot (\hat{b}+\tilde{b})$}

Let $u_*(t,x;s,y)$ denote the heat kernel of $$\Lambda_{*}=-\nabla \cdot a \cdot \nabla + \nabla \cdot (\hat{b}+\tilde{b}).$$
By \eqref{ub_11}, by duality, $u_*(t,x;s,y)$ satisfies the upper Gaussian  bound
\begin{equation}
\label{ub_12}
u_*(t,x;s,y) \leq \hat{c}_3 k_{c_4}(t-s;x-y).
\end{equation}
The constants in the next proposition depend on the same parameters as the constants in the theorem except of the Kato bounds $\mu_{\pm}$ and $({\rm div\,}b)_{\pm}$.

\begin{proposition}
\label{g2_thm_3}
Let $\beta$ and $\mathbb{C}$ be constants from Proposition \ref{g1_thm_3}. 
Set $o=\frac{x+y}{2}$, $x,y \in \mathbb R^d$, $t_s=\frac{t+s}{2}$.
Then
\[
\hat{G}(t_s) 
:=\langle k_\beta(t_s-s,o -\cdot)\log u_*(t_s, \cdot; s,z) \geq - \tilde{Q}(t_s-s) - \mathbb{C}, \quad z \in B(o,\sqrt{t_s-s}).
\]
\end{proposition}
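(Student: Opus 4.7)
The key observation is that $\hat{G}$ is the dual counterpart of the Nash entropy $G$ in Proposition~\ref{g1_thm_3}: the formal $L^2$-adjoint of $\Lambda_{\ast}=-\nabla\cdot a\cdot\nabla+\nabla\cdot b$ (with $b=\hat b+\tilde b$) is $\Lambda_{\ast}^{\ast}=-\nabla\cdot a\cdot\nabla-b\cdot\nabla=:\Lambda_{(-b)}$, so the two heat kernels are related by the spatial-swap identity $u_{\ast}(\tau,y;s,z)=u_{(-b)}(\tau,z;s,y)$. Substituting this into the definition of $\hat G$ yields
\[
\hat G(t_s)=\bigl\langle k_\beta(t_s-s,o-\cdot)\,\log u_{(-b)}(t_s,z;s,\cdot)\bigr\rangle,
\]
which has exactly the structural form of $G$ from Proposition~\ref{g1_thm_3}, except that the operator is $\Lambda_{(-b)}$ and the roles of ``final time'' and ``midpoint (evaluation) time'' are played by $t_s$ and $s$ rather than by $t$ and $t_s$.

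Next I would verify that $\Lambda_{(-b)}$ satisfies the hypotheses of Theorem~\ref{thm0__}: $-\hat b\in \mathbf{MF}_\delta$ with the same $\delta$ and $g$; $-\tilde b\in L^\infty\,{\rm BMO}^{-1}$, represented by the skew-symmetric matrix $-B$ with the same seminorm; and $({\rm div}(-\hat b))_\pm=({\rm div}\,\hat b)_\mp$, so that the form-bounds $\nu_\pm$, the auxiliary functions $h_\pm$, and the Kato norms $\mu_\pm$ merely get their $\pm$ labels swapped. The symmetric conditions $\nu_\pm<\sigma$, $\mu_\pm<\mu_\pm^{\ast}$, $h_\pm\in L^1(\mathbb R_+)$ in Theorem~\ref{thm0__} therefore hold for $\Lambda_{(-b)}$ as well, and Theorem~\ref{thm0_} produces an upper Gaussian bound $u_{(-b)}(t,x;s,y)\le \hat c_3\,k_{c_4}(t-s;x-y)$ of the same form as \eqref{ub_11}. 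This bound is what the proof of Proposition~\ref{g1_thm_3} uses as input, in particular for the pointwise comparison $\Gamma\ge CU$ and the Nash entropy estimate.

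The proof then concludes by applying Proposition~\ref{g1_thm_3} verbatim to $u_{(-b)}$ with $t':=t_s$ and $t'_s:=s$. Inspection of that proof shows that after the statement setup only $t'$, $t'_s$, and the $\tau$-dynamics of $U_{(-b)}(\tau,\cdot):=u_{(-b)}(t',z;\tau,\cdot)$ on the interval $[t'_s,(t'+t'_s)/2]=[s,(s+t_s)/2]$ ever enter the computation; the ``outer'' initial time $s':=2t'_s-t'=2s-t_s$ never appears, so the fact that it may be negative is harmless. Since $[s,(s+t_s)/2]\subset[0,t_s]$ lies in the domain where $u_{(-b)}(t_s,z;\cdot,\cdot)$ is defined, the Nash argument and the estimates of Lemma~\ref{cl2} go through with identical constants, yielding
\[
\bigl\langle k_\beta(t'-t'_s,o-\cdot)\,\log u_{(-b)}(t',z;t'_s,\cdot)\bigr\rangle\;\ge\;-\tilde Q(t'-t'_s)-\mathbb{C},\qquad z\in B(o,\sqrt{t'-t'_s}),
\]
which is precisely $\hat G(t_s)\ge -\tilde Q(t_s-s)-\mathbb{C}$ for $z\in B(o,\sqrt{t_s-s})$. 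The main (and only) obstacle is the bookkeeping needed to confirm that the hypotheses of Theorem~\ref{thm0__} are genuinely symmetric under $b\mapsto -b$ and that no constant or quantity in Proposition~\ref{g1_thm_3} secretly depends on $s'$; once that is checked, Proposition~\ref{g2_thm_3} reduces to Proposition~\ref{g1_thm_3} through the adjoint identity with no additional analytical content.
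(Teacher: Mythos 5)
Your overall strategy --- treating Proposition \ref{g2_thm_3} as the mirror image of Proposition \ref{g1_thm_3}, with the drift sign flipped and the roles of $({\rm div\,}\hat b)_\pm$ exchanged --- is the right one and is close in spirit to what the paper does. But the device you use to implement it fails: the ``spatial-swap identity'' $u_*(\tau,y;s,z)=u_{(-b)}(\tau,z;s,y)$ is false for time-inhomogeneous coefficients. It is equivalent to asserting that the adjoint $(P^{\tau,s})^*$ of the forward evolution family of $\Lambda_*$ coincides with the forward evolution family $Q^{\tau,s}$ of $\Lambda_{(-b)}=\Lambda_*^*$. In fact $(P^{\tau,s})^*$ solves the backward (final-value) problem $\partial_s W=\Lambda_{(-b)}(s)W$, $W|_{s=\tau}={\rm Id}$, i.e.\ it is the reverse-ordered product of the propagators, while $Q^{\tau,s}$ is the forward-ordered one; these agree only when the generators at different times commute (e.g.\ autonomous coefficients). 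A two-step example with $\Lambda_{(-b)}(t)=A_1$ on $[0,1)$ and $A_2$ on $[1,\infty)$ gives $(P^{t,s})^*=e^{-(1-s)A_1}e^{-(t-1)A_2}\neq e^{-(t-1)A_2}e^{-(1-s)A_1}=Q^{t,s}$. Since $a$ and $b$ here genuinely depend on $t$, the reduction ``apply Proposition \ref{g1_thm_3} verbatim to $u_{(-b)}$'' rests on an identity that does not hold, and everything downstream (transfer of the hypotheses, of the bound \eqref{ub_11}, and of Lemma \ref{cl2}) inherits this gap.

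The correct duality is $u_*(t_s,x;s,y)=u^{\leftarrow}_{(-b)}(t_s,y;s,x)$, where $u^{\leftarrow}_{(-b)}$ is the heat kernel of the drift-form operator with coefficients time-reversed on $[s,t_s]$; if you insist on your reduction you must introduce this time reversal and verify that $\mathbf{MF}_\delta$, $L^\infty\,{\rm BMO}^{-1}$, the form-bounds and the two-sided Kato condition \eqref{kato}, as well as the a priori upper bound of Theorem \ref{thm0_}, are preserved under it --- bookkeeping your proposal does not perform. The paper avoids this altogether: since the Nash argument of Proposition \ref{g1_thm_3} only ever uses the equation satisfied by the kernel in the integrated variable, one simply reruns it for $\hat U(\tau):=u_*(\tau,\cdot;s,z)$, which satisfies the forward equation $\partial_\tau \hat U=\nabla\cdot a\cdot\nabla\hat U-(\hat b+\tilde b)\cdot\nabla\hat U-({\rm div\,}\hat b)\hat U$; the computation is identical except that the zeroth-order term is now $-{\rm div\,}\hat b$, so $({\rm div\,}\hat b)_+$, with $\nu_+$, $h_+$, $\mu_+$, takes over the role played by $({\rm div\,}\hat b)_-$, $\nu_-$, $h_-$, $\mu_-$ in Proposition \ref{g1_thm_3}. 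Your two side observations --- that the hypotheses are symmetric under $b\mapsto -b$ (up to swapping the $\pm$ labels) and that the unused ``outer'' time $2s-t_s$ causes no trouble --- are correct and are indeed the remaining ingredients, but they do not repair the false kernel identity on which your reduction is built.
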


\begin{proof}
The proof repeats the proof of Proposition \ref{g1_thm_3}, except that we have to deal with the positive part ${\rm div\,}\hat{b}_+$ of the divergence of $\hat{b}$.
\end{proof}

\subsection{Auxiliary operator $-\nabla \cdot a \cdot \nabla +  (\hat{b} +\tilde{b}) \cdot \nabla -  ({\rm div\,}\hat{b})_-$}

Recall the notation
$\Lambda=-\nabla \cdot a \cdot \nabla + (\hat{b} + \tilde{b}) \cdot \nabla$, $\Lambda_{*}=-\nabla \cdot a \cdot \nabla + \nabla \cdot (\hat{b}+\tilde{b})$.
Set $$\mathcal  H^- := \Lambda - (\mydiv \hat{b})_-.$$ Let $H^{t,s}f$ denote the solution of
\[ 
	\left\{ \begin{array}{rcl}
		-\frac{d}{d t} H^{t,s} f = \mathcal  H^- H^{t,s} f & ,  \\
	0 \leq f \in L^1 \cap L^\infty. & 
	\end{array} \right.
\]
Let $h(t):= H^{t,s}f.$ It is seen (for example, using the Duhamel formula) that $$u(t,x;t_s,y) \leq h(t,x;t_s,y), 
$$
$$ u_*(t_s,x;s,y) \leq h(t_s,x;s,y),$$ where $u, u_*$ are the heat kernels of $\Lambda$, $\Lambda_{*}$, respectively. It is seen that
\[
h(t,x;s,y) \geq (4 \pi \beta (t-t_s))^{d/2} \langle k_\beta (t-t_s,o -\cdot) h(t,x;t_s,\cdot) h(t_s,\cdot;s,y) \rangle,
\]
\[
k_\beta (t-t_s,o -\cdot)=k_\beta (t_s-s,o -\cdot),
\]
and, for all $2 |x-y| \leq \sqrt{\beta(t-t_s)}\;,$ due to Proposition \ref{g1_thm_3} and Proposition \ref{g2_thm_3},
\begin{align*}
\log h(t,x;s,y) & \geq \log (4 \pi \beta)^{d/2} + \tilde{Q}(t-t_s)\\
& + \langle k_\beta (t-t_s,o-\cdot) \log u(t,x;t_s,\cdot) \rangle + \langle k_\beta (t-t_s,o-\cdot) \log u_*(t_s,\cdot; s,y) \rangle \\
& \geq \log (4 \pi \beta)^{d/2} - \tilde{Q}(t-t_s)  -2 \mathbb{C} \\
& = - \tilde{Q}(t-s)  -2 \mathbb{C} + \log (8 \pi \beta)^{d/2},
\end{align*}
i.e.\,we have proved a lower Gaussian bound for $h(t,x;s,y)$ but only for  $2 |x-y| \leq \sqrt{\beta(t-t_s)}$.
Now, the standard argument  (``small gains yield large gain''), see e.g.\,\cite[Theorem 3.3.4]{Da}, gives

\begin{theorem}
\label{lb_div}
There exist constants $c_1$, $c_2 > 0$ such that, for all $x,y \in \mathbb{R}^d$, $0 \leq s<t<\infty$,
\begin{equation}
c_1 k_{c_2}(t-s,x-y) \leq h(t,x;s,y)
\tag{$\mbox{LGB}^{h_-}$.}
\end{equation}

\end{theorem}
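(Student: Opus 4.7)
The plan is to upgrade the near-diagonal lower bound on $h(t,x;s,y)$ derived from Propositions \ref{g1_thm_3} and \ref{g2_thm_3} (which yields $h(t,x;s,y)\ge (8\pi\beta)^{d/2}e^{-2\mathbb C}(t-s)^{-d/2}$ in the regime $2|x-y|\le\sqrt{\beta(t-s)}$) to a full Gaussian lower bound on all of $\mathbb R^d\times\mathbb R^d$ by the classical Aronson--Davies chaining argument, as hinted at in the excerpt via the reference to \cite[Theorem 3.3.4]{Da}. The principal tool is the reproduction property
\[
h(t,x;s,y)=\langle h(t,x;r,\cdot)h(r,\cdot;s,y)\rangle,\quad s<r<t,
\]
which is available because $h$ is the integral kernel of a Markov evolution family (this is inherited from the regularized problems through the approximation procedure).

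In the on-diagonal regime $2|x-y|\le\sqrt{\beta(t-s)}$, the stated inequality $h\ge c_1 k_{c_2}(t-s,x-y)$ follows at once from the established near-diagonal estimate upon absorbing the bounded Gaussian factor into $c_1$. For the off-diagonal regime $|x-y|^2>\beta(t-s)/4$, I would partition $[s,t]$ into $N$ equal subintervals of length $\tau=(t-s)/N$, set $t_k:=s+k\tau$, and introduce the equally spaced intermediate points $z_k:=y+\tfrac{k}{N}(x-y)$, $k=0,\ldots,N$. Iterating the reproduction property and restricting the integrations to balls $B_k:=B(z_k,r)$ with $r$ a small multiple of $\sqrt{\tau}$ gives
\[
h(t,x;s,y)\ \geq\ \int_{B_1}\!\!\cdots\!\!\int_{B_{N-1}}\prod_{k=0}^{N-1}h(t_{k+1},\zeta_{k+1};t_k,\zeta_k)\,d\zeta_1\cdots d\zeta_{N-1},
\]
with $\zeta_0:=y$, $\zeta_N:=x$. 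Choosing $N$ with $N\asymp 1+|x-y|^2/(t-s)$ ensures that $|z_{k+1}-z_k|+2r\le\tfrac12\sqrt{\beta\tau}$ for all $k$, so for any $\zeta_k\in B_k$ and $\zeta_{k+1}\in B_{k+1}$ the pair $(\zeta_k,\zeta_{k+1})$ lies in the near-diagonal regime of the estimate proved above, and hence $h(t_{k+1},\zeta_{k+1};t_k,\zeta_k)\ge C_0\tau^{-d/2}$.

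Combining the $N$ pointwise bounds with the volume estimates $|B_k|\gtrsim\tau^{d/2}$ yields $h(t,x;s,y)\ge C^N\tau^{-d/2}$ for some generic$\ast$ constant $C\in(0,1)$ depending only on $C_0$ and the fraction selected for $r$. Since $N\asymp 1+|x-y|^2/(t-s)$, the factor $C^N$ produces the desired Gaussian decay $\exp(-c_2|x-y|^2/(t-s))$, while the polynomial discrepancy between $\tau^{-d/2}=N^{d/2}(t-s)^{-d/2}$ and $(t-s)^{-d/2}$ is absorbed into the exponential via a mild worsening of $c_2$; this delivers $(\mathrm{LGB}^{h_-})$. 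The main technical nuisance I anticipate is bookkeeping the constants through the chain so as to verify that $c_1$ and $c_2$ remain generic$\ast$ (with the only additional dependence being on the Kato norms $\mu_\pm$); this is ultimately automatic, however, since the near-diagonal bound is uniform within its regime and the chaining introduces only universal combinatorial factors. No genuinely new analytic obstacle arises beyond what has already been overcome in the estimates of the Nash $G$-functions.
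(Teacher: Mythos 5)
Your proposal is correct and follows essentially the same route as the paper: the near-diagonal bound $h(t,x;s,y)\gtrsim (t-s)^{-d/2}$ for $2|x-y|\lesssim\sqrt{\beta(t-s)}$ obtained from Propositions \ref{g1_thm_3} and \ref{g2_thm_3} via the reproduction property and Jensen's inequality, followed by the standard chaining (``small gains yield large gain'') argument, which the paper simply cites as \cite[Theorem 3.3.4]{Da} and you write out explicitly. The minor discrepancies (the threshold $\sqrt{\beta(t-s)}$ versus $\sqrt{\beta(t-t_s)}$, and attributing the reproduction property to the approximation procedure when at this a priori stage $a$, $b$, ${\rm div\,}b$ are bounded smooth so it holds directly) are immaterial to the argument.
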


\subsection{Proof of Theorem \ref{thm2} (a priori lower bound)} 

\label{thm2_proof}

\textit{Step 1.~}First, we establish an upper bound on the heat kernel  $h(t,x;s,y)$ of $\mathcal  H^-=A+(\hat{b}+\tilde{b})\cdot \nabla - ({\rm div\,} \hat{b})_-$:
\begin{equation}
\label{temp_ub2}
\tag{$\ast\ast$}
h(t,x;s,y) \leq \tilde{c}_3 k_{c_4}(t-s;x-y)
\end{equation}
for all $x, y \in \mathbb{R}^d$ and $0 \leq s < t < \infty$.
Indeed, we can write the Duhamel series for $h(t,x;s,y)$, with $\mathcal  H^-$ is viewed as $A+(\hat{b}+ \tilde{b})\cdot \nabla$ perturbed by potential $-({\rm div\,}\hat{b})_-$. Then the upper Gaussian  bound on $u(t,x;s,y)$, established in Theorem \ref{thm0_}, and the hypothesis that the Kato norm $\mu_-$ of $({\rm div} \hat{b})_-$ is sufficiently small, yield \eqref{temp_ub2} via a standard argument.

\textit{Step 2.~}Let us consider $A+(\hat{b} + \tilde{b}) \cdot \nabla$ as the perturbation of $\mathcal  H^-$ by $ ({\rm div\,}\hat{b})_-$. Then
$$
u(t,x;s,y)=h(t,x;s,y) - \int_s^t \big\langle u(t,x;\tau,\cdot) ({\rm div\,}\hat{b})_-(\cdot)h(\tau,\cdot;s,y)\big\rangle d\tau.
$$
We apply  Theorem \ref{lb_div}  to the first term. In the second term, we can apply the upper Gaussian  bound on $u(t,x;\tau,\cdot)$ established in Theorem \ref{thm0_}, apply \eqref{temp_ub2}, and use the hypothesis that the Kato norm $\mu_-$ of $ {\rm div\,}\hat{b}_-$ is sufficiently small  to obtain
$$u(t,x;s,y) \geq \frac{C_1}{(t-s)^{\frac{d}{2}}}-\frac{C_2\mu_-}{(t-s)^{\frac{d}{2}}}$$
whenever  $|x-y| \leq \sqrt{t-s}$, for all $0 \leq s<t <\infty$ (see \cite[Sect.\,5]{Z} for detailed argument).
This yields $u(t,x;s,y) \geq C_0(t-s)^{-\frac{d}{2}}$ provided that $\mu_-$ is sufficiently small. 
Now, a standard argument (\cite[Theorem 3.3.4]{Da})  gives the required lower Gaussian bound on $u(t,x;s,y)$ for all $x,y \in \mathbb R^d$, $0 \leq s<t <\infty$.

\bigskip

\section{Proof of Theorem \ref{thm0__}}

\label{approx_sect}

We fix the following bounded smooth approximation of $a$, $\hat{b}$, $\tilde{b}$. In what follows, we extend $a$, $\hat{b}$, $\tilde{b}$ to $t<0$ by zero. Let $E_{\varepsilon}$ be the De Giorgi mollifier on $\mathbb R \times \mathbb R^d$,
$$
E_{\varepsilon} f(t,x):=\int_{\mathbb R }\langle e^{\varepsilon \Delta_{d+1}}(t,x;s,\cdot)f(s,\cdot)\rangle ds , \quad f \in L^1_{\loc}(\mathbb R \times \mathbb R^d)$$
($\Delta_{d+1}$ is the Laplacian on $\mathbb R \times \mathbb R^d$). Let
$$E_\varepsilon^d f(x):=\langle e^{\varepsilon \Delta}(x,\cdot)f(\cdot)\rangle, \quad f \in L^1_{\loc}.$$
For a Banach-valued measurable function $h=h(t)$, we define its Steklov averaging
$$
[h]_\varepsilon(t):=\frac{1}{\varepsilon}\int_t^{t+\varepsilon}h(r)dr.
$$

Put
$$
a_{\varepsilon_1}:=E_{\varepsilon_1}a.
$$
Clearly, $a_{\varepsilon_1}$ is $C^\infty$ smooth, symmetric and uniformly elliptic with the same constants as $a$.

Set
$$
\hat{b}_\varepsilon:=\big[E_\varepsilon^d \hat{b}\big]_{c(\varepsilon)}, 
$$
$$
{(\rm div\,}\hat{b})_{\pm,\varepsilon}:=[E_\varepsilon^d {(\rm div\,}\hat{b})_\pm]_{c(\varepsilon)},
$$
where $c(\varepsilon)>0$ is to be chosen.
Then, clearly, ${(\rm div\,}\hat{b})_\varepsilon={(\rm div\,}\hat{b})_{+,\varepsilon} - {(\rm div\,}\hat{b})_{-,\varepsilon}$.

(Note that we can not use the same regularization of $\hat{b}$ as in Proposition \ref{fbd_approx_prop} since  the indicator function $\mathbf{1}_\varepsilon$ there would not allow us to control ${\rm div\,}\hat{b}_\varepsilon$.)

Finally, given a vector field $(\tilde{b}  \in L^\infty\,{\rm BMO}^{-1},{\rm div\,}\tilde{b}=0)$, we define its bounded smooth approximation as in {\cite[Sect.\,3]{QX}}. 
There exist a skew-symmetric matrix $B \in [L^\infty\,{\rm BMO}]^{d\times d} \cap [L_{\loc}^p(\mathbb R_+ \times \mathbb R^d)]^d$ for all $1 \leq p<\infty$ such that $b=\nabla B$.
Set 
$$
B_\varepsilon:=e^{\varepsilon \Delta_{d+1}}(B \wedge U_\varepsilon \vee V_\varepsilon) \quad (\text{max and min are taken component-wise}), 
$$ 
where $U_\varepsilon:=(-c\log|x| + \varepsilon^{-1}) \wedge \varepsilon^{-1} \vee 0$, $V_\varepsilon:=(c\log|x|-\varepsilon^{-1}) \wedge 0 \vee (-\varepsilon^{-1})$ are ${\rm BMO}$ functions with compact support. The constant $c$ is chosen so that $\|c\log|x|\|_{\rm BMO} \leq \|B\|_{L^\infty\,{\rm BMO}}$.
Define
$$
\tilde{b}_\varepsilon:=\nabla B_\varepsilon
$$
(since $B_\varepsilon$ are skew-symmetric, ${\rm div\,}\tilde{b}_\varepsilon=0$).

\begin{proposition}
\label{lem_hat_b}
Let $\hat{b} \in \mathbf{MF}_\delta$. Then the following are true:

\smallskip

{\rm (\textit{i})} For every $t \geq 0$, $x \in \mathbb R^d$,
$$
|E_\varepsilon^d\hat{b}(t,x)| \leq \sqrt{\frac{d}{8\varepsilon}}\delta  + g(t).
$$

\smallskip

{\rm (\textit{ii})} $E_\varepsilon^d\hat{b} \in L_\loc^2(\mathbb R_+,C_b(\mathbb R^d))$.
\end{proposition}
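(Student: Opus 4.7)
The plan is to reduce both parts to a single pointwise estimate obtained by plugging the square root of the heat kernel into the multiplicative form-boundedness inequality for $|\hat{b}|$. For fixed $x\in\mathbb R^d$, I would set
$$\psi_x(y):=\bigl(e^{\varepsilon\Delta}(x,y)\bigr)^{1/2}=(4\pi\varepsilon)^{-d/4}e^{-|y-x|^2/(8\varepsilon)},$$
so that $\psi_x^2$ is the heat kernel at $(x,\cdot)$, hence a probability density. The two things I need are $\|\psi_x\|_2^2=1$ and, from $\nabla_y\psi_x=-\frac{y-x}{4\varepsilon}\psi_x$ together with the Gaussian second moment $\int|y-x|^2 e^{\varepsilon\Delta}(x,y)dy=2d\varepsilon$, the identity $\|\nabla\psi_x\|_2^2=\frac{d}{8\varepsilon}$.

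For part (\textit{i}), Minkowski's inequality for vector-valued integrals gives
$$|E_\varepsilon^d\hat{b}(t,x)|\leq \int e^{\varepsilon\Delta}(x,y)|\hat{b}(t,y)|\,dy=\langle|\hat{b}(t)|\psi_x,\psi_x\rangle.$$
A brief truncation (replacing $|\hat{b}(t)|$ by $|\hat{b}(t)|\wedge n$ and letting $n\uparrow\infty$ via monotone convergence, which is needed to ensure $\psi_x$ is an admissible test function a priori) allows me to invoke the definition of $\mathbf{MF}_\delta$, yielding
$$\langle|\hat{b}(t)|\psi_x,\psi_x\rangle\leq\delta\|\nabla\psi_x\|_2\|\psi_x\|_2+g(t)\|\psi_x\|_2^2=\delta\sqrt{\tfrac{d}{8\varepsilon}}+g(t),$$
which is exactly (\textit{i}), and the bound is uniform in $x$.

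For part (\textit{ii}), the same inequality gives the $x$-uniform bound $\|E_\varepsilon^d\hat{b}(t,\cdot)\|_\infty\leq\delta\sqrt{d/(8\varepsilon)}+g(t)$, so the $L^2_{\loc}(\mathbb R_+)$-in-$t$ estimate of this $C_b$-norm follows from the inequality $(a+b)^2\leq 2a^2+2b^2$ once one knows $g\in L^2_{\loc}(\mathbb R_+)$; this last input is the ambient assumption $g\in L^2(\mathbb R_+)$ of Theorem \ref{thm0__}(\eqref{c3}), not part of the raw definition of $\mathbf{MF}_\delta$. Continuity (in fact smoothness) of $x\mapsto E_\varepsilon^d\hat{b}(t,x)$ for a.e.\,$t$ comes from differentiation under the integral sign: on any ball $B_R(x_0)$, the $x$-derivatives of $e^{\varepsilon\Delta}(x,y)$ are dominated by $C_\alpha(\varepsilon,R)\,e^{-|y-x_0|^2/(16\varepsilon)}$, and integrability of this majorant against $|\hat{b}(t,y)|$ is furnished by the very same test-function computation applied at $x_0$ (with a slightly larger $\varepsilon$), so dominated convergence and standard arguments apply.

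The only subtle point is that the condition $\int_s^t g\,d\tau\leq c\sqrt{t-s}$ built into $\mathbf{MF}_\delta$ is strictly weaker than $g\in L^2_{\loc}$ (as the example $g(t)=|t|^{-1/2}$ already shows); so part (\textit{ii}) is really being used in the context of Theorem \ref{thm0__}, where $g\in L^2(\mathbb R_+)$ is available. Beyond this accounting of hypotheses, the proof is a direct substitution into the definition of the class $\mathbf{MF}_\delta$ and should involve no new ideas.
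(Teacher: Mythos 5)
Your proof is correct and takes essentially the same route as the paper: part (\textit{i}) is obtained exactly as in the paper by testing the $\mathbf{MF}_\delta$ inequality with $\psi_x=\sqrt{e^{\varepsilon\Delta}(x,\cdot)}$, using $\|\psi_x\|_2=1$ and $\|\nabla\psi_x\|_2^2=\frac{d}{8\varepsilon}$, and your remark that part (\textit{ii}) requires the ambient assumption $g\in L^2(\mathbb R_+)$ from \eqref{c3} rather than the bare definition of $\mathbf{MF}_\delta$ is precisely how the paper uses it. The only (immaterial) difference is the continuity of $x\mapsto E_\varepsilon^d\hat{b}(t,x)$: the paper factorizes $E_\varepsilon^d=E^d_{\varepsilon/2}E^d_{\varepsilon/2}$ and applies the bound of (\textit{i}) at scale $\varepsilon/2$, whereas you differentiate under the integral sign with a Gaussian majorant; both are routine.
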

\begin{proof}(\textit{i}) We have
\begin{align*}
|E_\varepsilon^d\hat{b}(t,x)| & =|\langle \hat{b}(t,\cdot)\sqrt{e^{\varepsilon\Delta}(x,\cdot)},\sqrt{e^{\varepsilon\Delta}(x,\cdot)}\rangle | \\
& \leq \delta \langle \big|\nabla \sqrt{e^{\varepsilon\Delta}(x,\cdot)}\big|^2 \rangle^{\frac{1}{2}}   + g(t)  \\
& (\text{we use $\big\langle \big|\nabla\sqrt{e^{\varepsilon\Delta}(x,\cdot)}\big|^2\big\rangle=\frac{d}{8\varepsilon}$}) \\
& \leq \sqrt{\frac{d}{8\varepsilon}}\delta  + g(t).
\end{align*}

(\textit{ii}) Since $E_\varepsilon^d=E^d_{\varepsilon/2}E^d_{\varepsilon/2}$, we have for a.e.\,$t \in \mathbb R_+$ $E_\varepsilon^d\hat{b}(t,\cdot) \in C_b$. Since $g \in L^2(\mathbb R_+)$, we have $E_\varepsilon^d\hat{b} \in L_\loc^2(\mathbb R_+,C_b(\mathbb R^d))$.
\end{proof}

Recall that
$$
\hat{b}_\varepsilon=\big[E_\varepsilon^d \hat{b}\big]_{c(\varepsilon)}, 
$$
where the rate of Steklov averaging $c(\varepsilon) \downarrow 0$ is to be chosen.

\begin{proposition} 
\label{mf_reg}
Let $\hat{b} \in \mathbf{MF}_\delta$. Then

\smallskip

{\rm (\textit{i})} $\hat{b}_\varepsilon\in [L^\infty(\mathbb R_+ \times \mathbb R^d)\cap C^\infty(\mathbb R_+ \times \mathbb R^d)]^d$.

{\rm (\textit{ii})} $\hat{b}_\varepsilon \in \mathbf{MF}_\delta$ with $\|g_\varepsilon\|_{L^{2}(\mathbb R_+)} \leq \|g\|_{L^{2}(\mathbb R_+)}$.
\end{proposition}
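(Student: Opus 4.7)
The plan is to reduce both assertions to two key ingredients: the pointwise bound on the spatial mollification from Proposition \ref{lem_hat_b}(i), together with a contraction estimate showing that the heat-semigroup averaging $E^d_\varepsilon$ does not enlarge the multiplicative form-bound of $|\hat b|$. Steklov averaging in time will then be a routine convolution estimate.

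For (\textit{i}), I would use Minkowski's inequality for the vector integral to write
$$
|\hat{b}_\varepsilon(t,x)| \leq \frac{1}{c(\varepsilon)}\int_t^{t+c(\varepsilon)}|E_\varepsilon^d \hat{b}(r,x)|\,dr,
$$
apply Proposition \ref{lem_hat_b}(i) to each factor, and use $g \in L^2(\mathbb R_+)$ (hypothesis \eqref{c3}) together with Hölder's inequality to bound the time integral. This yields a uniform-in-$(t,x)$ bound for each fixed $\varepsilon>0$. Smoothness in $x$ is immediate from the smoothness of the Gaussian kernel (differentiation commutes with the spatial convolution, and all spatial derivatives of $e^{\varepsilon\Delta}(x,\cdot)$ are integrable against the bounded function $\hat b\wedge \varepsilon^{-1}$-type truncations that dominate $\hat b$ in the form-bound sense); joint regularity in $t$ is obtained from the standard continuity properties of Steklov averaging and, if strict $C^\infty$ smoothness in $t$ is desired, by an additional smooth mollification in $t$ on the scale $c(\varepsilon)$.

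For (\textit{ii}), the heart of the matter is the pointwise-in-$r$ transfer of the form-bound through the spatial averaging $E^d_\varepsilon$. Using the self-adjointness of $E^d_\varepsilon$ on $L^2$, I would rewrite, for $\psi\in W^{1,2}$,
$$
\langle E_\varepsilon^d|\hat b|(r,\cdot)\psi,\psi\rangle = \langle|\hat b|(r,\cdot), E_\varepsilon^d\psi^2\rangle = \langle |\hat b|(r,\cdot)\tilde\psi,\tilde\psi\rangle, \qquad \tilde\psi:=\sqrt{E^d_\varepsilon \psi^2}.
$$
Conservativity $\int e^{\varepsilon\Delta}(x,y)\,dy=1$ gives $\|\tilde\psi\|_2=\|\psi\|_2$; and the identity $\nabla E^d_\varepsilon(\psi^2)=2E^d_\varepsilon(\psi\nabla\psi)$ (integration by parts using the symmetry of the kernel) combined with Cauchy-Schwarz yields
$$
\frac{|\nabla E^d_\varepsilon(\psi^2)|^2}{4\, E^d_\varepsilon(\psi^2)} \leq E^d_\varepsilon(|\nabla\psi|^2),
$$
so that $\|\nabla\tilde\psi\|_2\leq \|\nabla\psi\|_2$. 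In particular $\tilde\psi\in W^{1,2}$, so that applying $\hat b\in\mathbf{MF}_\delta$ to $\tilde\psi$ gives
$$
\langle E^d_\varepsilon|\hat b|(r,\cdot)\psi,\psi\rangle \leq \delta\|\nabla\psi\|_2\|\psi\|_2 + g(r)\|\psi\|_2^2.
$$

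It remains to average in $r$ over $[t,t+c(\varepsilon)]$. Using $|\hat b_\varepsilon(t,x)|\leq [E^d_\varepsilon|\hat b|]_{c(\varepsilon)}(t,x)$ (again by Minkowski) and integrating the previous inequality, one obtains the required multiplicative form-bound for $\hat b_\varepsilon$ with
$$
g_\varepsilon(t):=\frac{1}{c(\varepsilon)}\int_t^{t+c(\varepsilon)}g(r)\,dr.
$$
Since Steklov averaging is convolution with $c(\varepsilon)^{-1}\mathbf 1_{[-c(\varepsilon),0]}$, of unit $L^1$-norm, Young's inequality for convolution gives $\|g_\varepsilon\|_{L^2(\mathbb R_+)}\leq \|g\|_{L^2(\mathbb R_+)}$. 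The only non-routine step is the auxiliary function $\tilde\psi=\sqrt{E^d_\varepsilon\psi^2}$: verifying $\tilde\psi\in W^{1,2}$ together with $\|\nabla\tilde\psi\|_2\leq\|\nabla\psi\|_2$ is what couples the form-bound hypothesis to the spatial mollification. Once that Cauchy-Schwarz step is in place, everything else is routine, and notably no commutator with $\nabla$ enters because we work with $|\hat b|$, not with $\hat b$ itself, so the divergence structure plays no role here.
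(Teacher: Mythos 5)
Your proposal is correct and follows essentially the same route as the paper: the key step, the Cauchy--Schwarz estimate $\|\nabla\sqrt{E^d_\varepsilon\psi^2}\|_2\leq\|\nabla\psi\|_2$ allowing the form-bound to be applied to $\tilde\psi=\sqrt{E^d_\varepsilon\psi^2}$, is exactly the paper's inequality $\|\nabla\sqrt{E^d_\varepsilon|\varphi|^2}\|_2\leq\|\nabla\varphi\|_2$, and the Steklov step with $g_\varepsilon=[g]_{c(\varepsilon)}$ and $\|g_\varepsilon\|_2\leq\|g\|_2$ is identical. Part (\textit{i}) is likewise handled in both cases by Proposition \ref{lem_hat_b} plus standard properties of the Steklov average.
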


\begin{proof} 
{(\textit{i})} follows immediately from Proposition \ref{lem_hat_b}(\textit{ii}) and the properties of Steklov averaging.

\smallskip

(\textit{ii})  
First, let us note that $\|\nabla\sqrt{E^d_\varepsilon|\varphi|^2}\|_2 \leq \|\nabla \varphi\|_2$. Indeed,
\begin{align*}
\|\nabla\sqrt{E^d_\varepsilon|\varphi|^2}\|_2 & =\big\|\frac{E^d_\varepsilon(|\varphi||\nabla|\varphi|)}{\sqrt{E^d_\varepsilon|\varphi|^2}}\big\|_2 \leq \|\sqrt{E^d_\varepsilon|\nabla |\varphi||^2}\|_2=\|E_\varepsilon|\nabla |\varphi||^2\|_1^\frac{1}{2} \leq\|\nabla|\varphi|\|_2\leq \|\nabla \varphi\|_2.
\end{align*}
By $\hat{b} \in \mathbf{MF}_\delta$, we have for a.e.\,$t \in \mathbb R_+$ and all $\varphi \in \mathcal S$,
\begin{align*}
 \langle |E_\varepsilon^d\hat{b}(t)| \varphi,\varphi\rangle & \leq \langle |b(t)|,E^d_\varepsilon |\varphi|^2\rangle \\
& \leq \delta \|\nabla \sqrt{E^d_\varepsilon |\varphi|^2}\|_2\|\sqrt{E^d_\varepsilon|\varphi|^2}\|_2 + g(t)\|\sqrt{E^d_\varepsilon|\varphi|^2}\|_2 \\
& \leq \delta \|\nabla \varphi\|_2\|\varphi\|_2 + g(t)\|\varphi\|_2^2.
\end{align*}
Hence for a.e.\,$t \in \mathbb R_+$
\begin{align*}
 \langle |\big[ E_\varepsilon^d\hat{b}\big]_{c(\varepsilon)}(t)| \varphi,\varphi\rangle & \leq   \big[\langle|E_\varepsilon^d\hat{b}| \varphi,\varphi\rangle\big]_{c(\varepsilon)} (t) \\
& \leq \delta \|\nabla \varphi\|_2\|\varphi\|_2 + [g]_{c(\varepsilon)}(t)\|\varphi\|_2^2.
\end{align*}
Hence $\hat{b}_\varepsilon \in \mathbf{M}_{\delta,g_{\varepsilon}}$ with $g_\varepsilon:= [g]_{c(\varepsilon)}$, and, clearly, $\|g_\varepsilon\|_2 \leq \|g\|_2$.
\end{proof}

Recall
$$
({\rm div\,}\hat{b})_{\pm,\varepsilon}:=[E_\varepsilon^d ({\rm div\,}\hat{b})_\pm]_{c(\varepsilon)}.
$$

\begin{proposition} \label{form_reg} Let $({\rm div\,}\hat{b})_\pm$ be form-bounded, i.e.
$({\rm div\,}\hat{b})_\pm \in L^1_{\loc}(\mathbb R_+ \times \mathbb R^d)$, the inequality
\begin{equation*}
\langle ({\rm div\,}\hat{b})_\pm \varphi,\varphi\rangle   \leq \nu_\pm \|\nabla \varphi\|^2_2  + h_\pm(t)\|\varphi\|_2^2, \quad \varphi \in W^{1,2}
\end{equation*}
holds for a.e.\,$t \in \mathbb R_+$ and some functions $0 \leq h_\pm  \in L^1_{\loc}(\mathbb R_+)$. 

Let $\mu_\pm$ be the Kato norms of $({\rm div\,}\hat{b})_\pm$.
Then the following is true:

\smallskip

{\rm (\textit{i})} $({\rm div\,}\hat{b})_{\pm,\varepsilon} \in L^\infty(\mathbb R_+ \times \mathbb R^d)\cap C^\infty(\mathbb R_+ \times \mathbb R^d)$.

\smallskip

{\rm (\textit{ii})} $({\rm div\,}\hat{b})_{\pm,\varepsilon}$ is form-bounded with the same form-bounds $\nu_\pm$ and $\|h_{\pm,\varepsilon}\|_{L^1(\mathbb R_+)} \leq \|h_\pm\|_{L^1(\mathbb R_+)}$.

\smallskip

{\rm (\textit{iii})} $({\rm div\,}\hat{b})_{\pm,\varepsilon}$ have Kato norms $\mu_\pm$, for all $\varepsilon>0$.
\end{proposition}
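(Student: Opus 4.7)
The argument parallels that of Proposition \ref{mf_reg}. For (\textit{i}), I would first test the form-bound hypothesis against $\varphi(y) := \sqrt{e^{\varepsilon\Delta}(x,y)}$, for which $\|\varphi\|_2^2 = 1$ and $\|\nabla\varphi\|_2^2 = d/(8\varepsilon)$, yielding the pointwise estimate
\[
E_\varepsilon^d ({\rm div\,}\hat{b})_\pm(t,x) = \langle ({\rm div\,}\hat{b})_\pm(t,\cdot), \varphi^2\rangle \leq \frac{\nu_\pm d}{8\varepsilon} + h_\pm(t).
\]
Steklov-averaging in time and using $h_\pm \in L^1(\mathbb R_+)$ then gives the uniform bound $\|({\rm div\,}\hat{b})_{\pm,\varepsilon}\|_\infty \leq \nu_\pm d/(8\varepsilon) + \|h_\pm\|_{L^1(\mathbb R_+)}/c(\varepsilon)$, while the smoothness in $x$ is inherited from the heat kernel $e^{\varepsilon\Delta}$ and the time-regularity from the Steklov average, exactly as in Proposition \ref{mf_reg}(\textit{i}).

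For (\textit{ii}), the key step is the $L^2$ self-adjointness of $E_\varepsilon^d$, which lets us move the mollifier from the distribution onto the test function:
\[
\langle E_\varepsilon^d ({\rm div\,}\hat{b})_\pm(t)\varphi,\varphi\rangle = \big\langle ({\rm div\,}\hat{b})_\pm(t), \big(\sqrt{E_\varepsilon^d\varphi^2}\big)^2\big\rangle.
\]
Applying the form-bound to $\sqrt{E_\varepsilon^d\varphi^2}$ and invoking the two chain-rule estimates already exploited in Proposition \ref{mf_reg}(\textit{ii}) --- namely $\|\nabla\sqrt{E_\varepsilon^d\varphi^2}\|_2 \leq \|\nabla\varphi\|_2$ and $\|\sqrt{E_\varepsilon^d\varphi^2}\|_2 = \|\varphi\|_2$ --- reproduces the form-inequality with unchanged $\nu_\pm$ and $h_\pm$. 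Passing to the Steklov average then yields (\textit{ii}) with $h_{\pm,\varepsilon} := [h_\pm]_{c(\varepsilon)}$, and Fubini gives $\|h_{\pm,\varepsilon}\|_{L^1(\mathbb R_+)} \leq \|h_\pm\|_{L^1(\mathbb R_+)}$.

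The main obstacle is (\textit{iii}). A naive ``extend the integration range by non-negativity'' closes the first Kato integral in \eqref{kato}, but fails for the second: after moving the mollifier onto the Gaussian via the semigroup identity $E_\varepsilon^d k(s,x,\cdot) = k(s+\varepsilon, x, \cdot)$ and changing variables, that integral ends up carrying a Gaussian whose argument is shifted \emph{into the past}, which cannot be dominated pointwise by the original kernel. The decisive ingredient is the Chapman--Kolmogorov identity
\[
k(s+\varepsilon,x,y) = \int_{\mathbb R^d} k(\varepsilon,x,z)\,k(s,z,y)\,dz, \qquad \int_{\mathbb R^d} k(\varepsilon,x,z)\,dz = 1.
\]
By $L^2$ self-adjointness of $E_\varepsilon^d$, for any non-negative $f$ and any $s\geq 0$,
\[
\int_s^\infty \big\langle k(\tau-s, x, \cdot), E_\varepsilon^d f(\tau, \cdot)\big\rangle d\tau = \int_{\mathbb R^d} k(\varepsilon, x, z)\bigg[\int_s^\infty \big\langle k(\tau-s, z, \cdot), f(\tau, \cdot)\big\rangle d\tau\bigg] dz,
\]
and likewise for the first integral. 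Since the bracketed inner integral is bounded by $\mu_\pm$ for every $z \in \mathbb R^d$ and the outer integral is a probability average, both Kato integrals of $E_\varepsilon^d({\rm div\,}\hat{b})_\pm$ are bounded by $\mu_\pm$. Finally, Fubini-averaging over the Steklov variable $u \in [0,c(\varepsilon)]$ --- each slice reducing, after the substitution $\sigma = \tau + u$, to the just-derived Chapman--Kolmogorov bound for $E_\varepsilon^d({\rm div\,}\hat{b})_\pm$ at starting time $s+u$ (resp.\,terminal time $t+u$) --- shows that $({\rm div\,}\hat{b})_{\pm,\varepsilon}$ has Kato norm at most $\mu_\pm$, uniformly in $\varepsilon > 0$.
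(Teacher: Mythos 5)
Your proposal is correct and takes essentially the same route as the paper: (i)--(ii) by mirroring Propositions \ref{lem_hat_b} and \ref{mf_reg} (testing against $\sqrt{e^{\varepsilon\Delta}(x,\cdot)}$, self-adjointness of $E_\varepsilon^d$ and the chain-rule bound $\|\nabla\sqrt{E_\varepsilon^d\varphi^2}\|_2\leq\|\nabla\varphi\|_2$), and (iii) by commuting the Steklov average with the Kato integrals and using that forward time-translations and the spatial mollification $E_\varepsilon^d$ preserve the Kato bound. Your Chapman--Kolmogorov/mass-one argument simply spells out the mollification-invariance step (needed in particular for the second Kato integral) that the paper leaves implicit.
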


\begin{proof}
The proof of (\textit{i}), (\textit{ii}) follows closely the proof of Proposition \ref{mf_reg}.

(\textit{iii}) Since the translations of $({\rm div\,}b)_+(s,\cdot)$ in $s$ belong to the Kato class with the same Kato norm, we have
\begin{align*}
& \sup_{t \geq 0, x \in \mathbb R^d}\int_{0}^{t}  \langle k(t-s,x,\cdot)({\rm div\,}b)_{+,\varepsilon}(s,\cdot)\rangle ds \\
& =  \sup_{t \geq 0, x \in \mathbb R^d}\frac{1}{c(\varepsilon)}\int_0^{c(\varepsilon)} \int_0^t \langle k(t-s,x,\cdot) E_\varepsilon^d({\rm div\,}b)_+(s+r,\cdot)\rangle ds dr   \leq \frac{1}{c(\varepsilon)}\int_0^{c(\varepsilon)}  \mu_+ dr = \mu_+.
\end{align*}  
The second integral in the definition of the Kato norm is treated in the same way.
\end{proof}

Let us prove the convergence results in \eqref{all_conv}. 
The convergence $a_{\varepsilon_1} \rightarrow a$ in $[L^2_{\loc}(\mathbb R_+ \times \mathbb R^d)]^{d \times d}$ is evident. Next, 
\begin{equation}
\label{b_hat_conv}
\hat{b}_\varepsilon  \equiv [E_\varepsilon^d \hat{b}]_{c(\varepsilon)} \rightarrow \hat{b} \quad \text{ in } \quad [L^1_{\loc}(\mathbb R_+,L^1_{\loc})]^d \quad \text{ as } \varepsilon \downarrow 0
\end{equation}
provided that $c(\varepsilon) \downarrow 0$ sufficiently rapidly.
Indeed, for any $T>0$, $r>0$,
\begin{align*}
\mathbf{1}_{[0,T]}\mathbf{1}_{B(0,r)}[E_\varepsilon^d \hat{b}]_{c(\varepsilon)} = \mathbf{1}_{[0,T]}[\mathbf{1}_{B(0,r)}  E_\varepsilon^d \hat{b}]_{c(\varepsilon)}.
\end{align*}
Since $C(\bar{B}(0,r))$ is a separable Banach space, by known properties of the Steklov averaging of Banach-valued functions, we have, for every fixed $\varepsilon>0$, 
$$
\mathbf{1}_{[0,T]}[\mathbf{1}_{B(0,r)}E_\varepsilon^d  \hat{b}]_{\varepsilon_2} \rightarrow \mathbf{1}_{[0,T]}\mathbf{1}_{B(0,r)}E_\varepsilon^d  \hat{b} \quad \text{ in } \quad [L^1([0,T],C(\bar{B}(0,r)))]^d \quad \text{ as } \varepsilon_2 \downarrow 0.
$$
Therefore, for every $\varepsilon>0$ can find $\varepsilon_2$ such that 
$$
\|\mathbf{1}_{[0,T]}[\mathbf{1}_{B(0,r)}E_\varepsilon^d  \hat{b}]_{\varepsilon_2} - \mathbf{1}_{[0,T]}\mathbf{1}_{B(0,r)}E_\varepsilon^d  \hat{b}\|_{L^1([0,T],L^1(\bar{B}(0,r)))}<\varepsilon.
$$
We put $c(\varepsilon):=\varepsilon_2$.
In turn,
$$
\mathbf{1}_{[0,T]}\mathbf{1}_{B(0,r)}E_\varepsilon^d  \hat{b} \rightarrow \mathbf{1}_{[0,T]}\mathbf{1}_{B(0,r)}\hat{b} \quad \text{ in } [L^1([0,T],L^1(\bar{B}(0,r)))]^d \quad \text{ as } \varepsilon \downarrow 0.
$$
Hence
$$
\hat{b}_\varepsilon  \equiv [E_\varepsilon^d \hat{b}]_{c(\varepsilon)} \rightarrow \hat{b} \quad \text{ in } \quad [L^1([0,T],L^1(\bar{B}(0,r)))]^d \quad \text{ as } \varepsilon \downarrow 0.
$$
Our choice of $c(\varepsilon)$ depends on $T$ and $r$. It is clear however that, using a diagonal argument, we can select $c(\varepsilon)$  even smaller to have \eqref{b_hat_conv}.

The same argument yields
$$
({\rm div\,}\hat{b})_{\pm,\varepsilon} \equiv [E_\varepsilon^d {\rm div\,}\hat{b}_\pm]_{c(\varepsilon)} \rightarrow {\rm div\,}\hat{b}_\pm \quad \text{ in } [L^1_{\loc}(\mathbb R_+ \times \mathbb R^d)]^d \quad \text{ as } \varepsilon \downarrow 0
$$
(we take $c(\varepsilon) \downarrow 0$ as $\varepsilon \downarrow 0$ even more rapidly, if needed).

\begin{proposition}[{\cite[Sect.\,3]{QX}}]
\label{bmo_reg}
Let $\tilde{b} \in L^\infty\,{\rm BMO}^{-1}$, ${\rm div\,}\tilde{b}=0$. Then
\smallskip

{\rm (\textit{i})} $\tilde{b}_\varepsilon \in [L^\infty(\mathbb R_+ \times \mathbb R^d)\cap C^\infty(\mathbb R_+ \times \mathbb R^d)]^d$.

\smallskip

{\rm (\textit{ii})} $\|B_\varepsilon\|_{L^\infty\,{\rm BMO}} \leq C\|B\|_{L^\infty\,{\rm BMO}}$ for a constant $C$ that only depends on the dimension $d$, and
$$
B_\varepsilon \rightarrow B \quad \text{ in } L^p_{\loc}(\mathbb R_+ \times \mathbb R^d) \text{ for all } 1 \leq p<\infty.
$$
\end{proposition}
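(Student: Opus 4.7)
The plan is to handle (i) by the $C^\infty$-smoothing of the $(d+1)$-dimensional heat semigroup applied to the bounded truncation $B\wedge U_\varepsilon\vee V_\varepsilon$, and (ii) by splitting into two independent BMO estimates --- one for the pointwise truncation, one for the Gaussian convolution --- followed by a dominated-convergence argument for the $L^p_{\loc}$ statement.

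\textbf{Part (i).} The functions $U_\varepsilon$ and $V_\varepsilon$ take values in $[0,\varepsilon^{-1}]$ and $[-\varepsilon^{-1},0]$ respectively, so the componentwise truncation $B\wedge U_\varepsilon\vee V_\varepsilon$ lies in $L^\infty(\mathbb R\times\mathbb R^d)$ with norm at most $\varepsilon^{-1}$, and a direct check (using $V_\varepsilon=-U_\varepsilon$) shows it remains skew-symmetric. Applying $e^{\varepsilon\Delta_{d+1}}$ then produces a $C^\infty$ bounded function, and derivative bounds for the Gaussian kernel give $\tilde b_\varepsilon=\nabla B_\varepsilon\in [L^\infty\cap C^\infty]^d$.

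\textbf{BMO bound in part (ii).} For real-valued $f,g\in{\rm BMO}(\mathbb R^d)$, the identities $\max(f,g)=\frac12(f+g+|f-g|)$, $\min(f,g)=\frac12(f+g-|f-g|)$ combined with $\||h|\|_{\rm BMO}\leq 2\|h\|_{\rm BMO}$ give
$$
\|\max(f,g)\|_{\rm BMO},\;\|\min(f,g)\|_{\rm BMO}\leq \tfrac32\bigl(\|f\|_{\rm BMO}+\|g\|_{\rm BMO}\bigr).
$$
The constant $c$ in the construction is chosen precisely so that $\|c\log|\cdot|\|_{\rm BMO}\leq \|B\|_{L^\infty\,{\rm BMO}}$, and truncating a BMO function between two constant levels preserves the BMO seminorm up to an absolute constant. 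Applying these bounds componentwise to $B(t,\cdot)$ uniformly in $t$ yields
$$
\sup_{t\in\mathbb R_+}\|B(t,\cdot)\wedge U_\varepsilon\vee V_\varepsilon\|_{\rm BMO}\leq C_d\,\|B\|_{L^\infty\,{\rm BMO}}
$$
with $C_d$ purely dimensional. Next, convolution with any $L^1$-normalized non-negative kernel $K$ preserves BMO: for a cube $Q$, setting $A_Q:=\int K(y)f_{Q-y}\,dy$, translation-invariance of the BMO seminorm gives $\tfrac{1}{|Q|}\int_Q|K*f-A_Q|\,dx\leq \|K\|_1\|f\|_{\rm BMO}$. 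Applied with the tensor-product Gaussian kernel of $e^{\varepsilon\Delta_{d+1}}$ --- first integrating in $x$ to bound the spatial BMO seminorm, then in $t$ by Minkowski against the $L^1$-normalized time density --- this gives $\|e^{\varepsilon\Delta_{d+1}}F\|_{L^\infty\,{\rm BMO}}\leq \|F\|_{L^\infty\,{\rm BMO}}$. Combining the two steps delivers the desired uniform estimate on $\|B_\varepsilon\|_{L^\infty\,{\rm BMO}}$.

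\textbf{$L^p_{\loc}$ convergence and main obstacle.} For each fixed $x$, $U_\varepsilon(x)\uparrow +\infty$ and $V_\varepsilon(x)\downarrow -\infty$ as $\varepsilon\downarrow 0$, so $B\wedge U_\varepsilon\vee V_\varepsilon\to B$ pointwise a.e.\ and is dominated by $|B|$. John--Nirenberg yields $B\in [L^p_{\loc}(\mathbb R_+\times\mathbb R^d)]^{d\times d}$ for every $p<\infty$, so dominated convergence supplies $L^p_{\loc}$-convergence of the truncation, and strong continuity of $e^{\varepsilon\Delta_{d+1}}$ on $L^p_{\loc}$ as $\varepsilon\to 0$ then delivers $B_\varepsilon\to B$ in $L^p_{\loc}$. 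The only delicate step is the uniform BMO control of the truncation against the $x$-dependent cutoffs $U_\varepsilon,V_\varepsilon$; the tight choice of the constant $c$ in their definition is exactly what allows one to absorb the $\|c\log|\cdot|\|_{\rm BMO}$ contribution into $\|B\|_{L^\infty\,{\rm BMO}}$, yielding a bound independent of $\varepsilon$.
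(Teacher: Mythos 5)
Your construction and estimates are correct, and they essentially reconstruct the argument the paper imports wholesale from \cite[Sect.\,3]{QX} (the paper itself offers only the citation): truncation between the compactly supported BMO cutoffs $U_\varepsilon$, $V_\varepsilon$, stability of BMO under lattice operations via $\max(f,g)=\tfrac12(f+g+|f-g|)$ and the tight choice of $c$, stability under averaging against an $L^1$-normalized kernel (your choice $A_Q=\int K(y)f_{Q-y}\,dy$ plus the factor-$2$ passage back to the mean is the standard and correct way, and the Minkowski step in the time variable handles the $(d+1)$-dimensional mollifier), and dominated convergence for the $L^p_{\loc}$ statement. Two small points deserve sharpening. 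First, joint local integrability $B\in L^p_{\loc}(\mathbb R_+\times\mathbb R^d)$ is not a consequence of John--Nirenberg alone: for fixed $t$ it controls $\|B(t,\cdot)\|_{L^p(Q)}$ only up to the cube mean $\bar B_Q(t)$, which a priori is an arbitrary function of $t$; the paper's standing assumption (Section \ref{notations_sect}) that $B$ has been modified by functions of $t$ so that $B\in L^q_{\loc}(\mathbb R_+\times\mathbb R^d)$ for all $q<\infty$ is exactly what you should invoke here. Second, ``strong continuity of $e^{\varepsilon\Delta_{d+1}}$ on $L^p_{\loc}$'' is not literally meaningful (the heat semigroup is not defined on all of $L^p_{\loc}$, and it is being applied to the moving target $B\wedge U_\varepsilon\vee V_\varepsilon$); the clean argument is to fix a compact set $K$, cut off with $\chi\in C_c^\infty$ equal to $1$ near $K$, use dominated convergence plus the approximation-of-identity property of $e^{\varepsilon\Delta_{d+1}}$ on $L^p(\mathbb R^{1+d})$ for the near part $\chi\,(B\wedge U_\varepsilon\vee V_\varepsilon)$, and bound the far part on $K$ by $\varepsilon^{-1}e^{-cR^2/\varepsilon}\to 0$ using the global bound $|B\wedge U_\varepsilon\vee V_\varepsilon|\leq \varepsilon^{-1}$ and Gaussian tails. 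With these routine adjustments the proof is complete.
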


By the definition of $\|\cdot\|_{L^\infty\,{\rm BMO^{-1}}}$, it follows from assertion (\textit{ii}) that $$\|\tilde{b}_\varepsilon\|_{L^\infty\,{\rm BMO^{-1}}} \leq C\|\tilde{b}\|_{L^\infty\,{\rm BMO^{-1}}}.$$

We are in position to end the proof of Theorem \ref{thm0__}.
Arguing as in \cite{FS}, we obtain from the a priori two-sided Gaussian heat kernel bounds established in Theorems \ref{thm0_}, \ref{thm2}: for all $\varepsilon_1$, $\varepsilon>0$, given a  solution $v_{\varepsilon_1,\varepsilon} \in C^\infty([r-R^2,r] \times \bar{B}(z,R))$ to $(\partial_t-\nabla \cdot a_{\varepsilon_1} \cdot \nabla + b_\varepsilon \cdot \nabla)v_{\varepsilon_1,\varepsilon}=0$ in 
$]r-R^2,r[ \times B(z,R)$, where $R \leq 1$, $z \in \mathbb R^d$, one have for every $0<\alpha<1$ 
$$
|v_{\varepsilon_1,\varepsilon}(t,x)-v_{\varepsilon_1,\varepsilon}(t',x')| \leq C
\|v_{\varepsilon_1,\varepsilon}\|_{L^\infty([r-R^2,r] \times \bar{B}(z,R))}
\biggl(\frac{|t-t'|^{\frac{1}{2}} + |x-x'|}{R} \biggr)^\beta
$$
for all $(t,x)$, $(t',x') \in [r-(1-\alpha^2)R^2,r] \times \bar{B}(z,(1-\alpha) R)$ 
for some constants $C$ and $\beta \in ]0,1[$ (i.e.\,independent of $\varepsilon_1$, $\varepsilon$).
This result applies, in particular, to the heat kernel $u_{\varepsilon_1,\varepsilon}(t,x;s,y)$ of $-\nabla \cdot a_{\varepsilon_1} \cdot \nabla + b_\varepsilon \cdot \nabla$ with $s$ and $y$ fixed. Therefore, applying Arzel\`{a}-Ascoli Theorem on sets $\{(t,x) \mid t \geq s+\frac{1}{n}, |x| \leq n\}$ coupled with a diagonal argument, we can extract sequences $\varepsilon_1$, $\varepsilon \downarrow 0$ 
such that
$$
u_{\varepsilon_1,\varepsilon}(\cdot,\cdot;s,y) \rightarrow u(\cdot,\cdot;s,y) \quad \text{ uniformly on every $U_n$, $n=1,2,\dots$}
$$
to some function $u(\cdot,\cdot;s,y)$. By construction, $u(t,x;s,y)$ satisfies two-sided Gaussian bounds and is H\"{o}lder continuous in $(t,x)$. Moreover, using again two-sided Gaussian bounds and a weak compactness argument, we may assume that, for every $m=1,2,\dots$,
$$
u_{\varepsilon_1,\varepsilon} \rightarrow u \text{ weakly in } L^2(\{\frac{1}{m} \leq t-s \leq m\} \times \{|x-y| \geq \frac{1}{m}\}), 
$$
so $u(t,x;s,y)$ as a function of variables $(t,x;s,y)$ is measurable. 

Furthermore, two-sided Gaussian bounds on $u$ and a standard mollifier argument yield 
$$
\langle u(t,x;s,\cdot)f(\cdot)\rangle \rightarrow f(x) \quad \text{ as } t \downarrow s
$$
in $L^p$ ($1 \leq p <\infty$) or $C_u$ depending on where $f$ is.
We define the sought evolution family $T^{t,s} \equiv T^{t,s}(a,b)$ by
$$
T^{t,s}f(x):=\langle u(t,x;;s,\cdot)f(\cdot)\rangle.
$$
The assertions (a), (c) of the theorem now follow. Assertion (b) follows via a standard compactness argument.
The integral kernel $u(t,x;s,y)$ is, by definition, a heat kernel of the formal operator $-\nabla a \cdot \nabla + (\hat{b}+\tilde{b})\cdot \nabla$.

Finally, to prove the second statement in (d), let us regularize $\hat{b}=\hat{b}^{(1)} + \hat{b}^{(2)}$  as above. That is, put $\hat{b}^{(1)}_\varepsilon:=[E_\varepsilon\hat{b}^{(1)}]_{c(\varepsilon)}$ and note that it satisfies $$\||\hat{b}^{(1)}_\varepsilon(t)|(\lambda-\Delta)^{-\frac{1}{2}}\|_{2 \rightarrow 2} \leq \sqrt{\delta_1}$$ (by an argument similar to the one in the proof of Proposition \ref{mf_reg}), while $\hat{b}^{(2)}_\varepsilon:=[E_\varepsilon \hat{b}^{(2)}]_{c(\varepsilon)}$ satisfies $$\|(\lambda-\Delta)^{-\frac{1}{2}}|\hat{b}^{(2)}_\varepsilon(t)|\|_{\infty} \leq \sqrt{\delta_2}$$ (by the integration by parts). Hence $\hat{b}_\varepsilon:=\hat{b}^{(1)}_\varepsilon + \hat{b}^{(2)}_\varepsilon \in L^\infty\,\mathbf{F}_\delta^{\scriptscriptstyle 1/2}$, so we can apply the convergence result in Theorem \ref{thm0}, which yields the required.

\bigskip

\section{Further discussion and examples}

\label{comm_sect}

\begin{remark}The theory of operator $-\Delta + b \cdot \nabla$ is quite different from the theory of $-\nabla \cdot a \cdot \nabla + b \cdot \nabla$ with general uniformly elliptic measurable matrix $a$.
This is clear already from the existence of the Kato class of vector fields $\mathbf{K}^{d+1}_\delta$, which is specific to $-\Delta + b \cdot \nabla$. Recall that, in the time-homogeneous case (for brevity),
\begin{align*}
\text{Kato class $\mathbf{K}^{d+1}_\delta$\,: }\quad |b| \in L^1_{\loc} \quad \text{ and } \quad \|(\lambda-\Delta)^{-\frac{1}{2}}|b|\|_{\infty} \leq \sqrt{\delta}
\end{align*}
for some $\lambda=\lambda_\delta \geq 0$.
Also, recall:
$$
\text{class of form-bounded vector fields $\mathbf{F}_\delta$\,: }  \quad |b| \in L^2_{\loc} \quad \text{ and } \quad \||b|(\lambda-\Delta)^{-\frac{1}{2}}\|_{2 \rightarrow 2} \leq \sqrt{\delta}.
$$
The Kato class $\mathbf{K}^{d+1}_\delta$ provides two-sided Gaussian bounds on the heat kernel of \eqref{eq0}  \cite{Z2} and ensures uniqueness in law for the corresponding SDE, at least as long as the relative bound $\delta$ can be chosen arbitrarily small \cite{BC}. 
 The Kato class contains vector fields $b$ with $|b| \not \in L^{p}_{\loc}(\mathbb R_+ \times \mathbb R^d)$ for any $p>1$. Clearly, they cannot be form-bounded. On the other hand, the Kato class does not contain even $b=b(x)$ with $|b| \in L^d(\mathbb{R}^d)$, so the two classes are incomparable.

The Kato class condition implies, by duality, that $$\|b \cdot \nabla (\lambda-\Delta)^{-1}\|_{1 \rightarrow 1} \leq c(d)\sqrt{\delta}\equiv \sqrt{\delta_1},$$ i.e.\,that $b \cdot \nabla$ is strongly subordinate to $\lambda -\Delta$ in $L^1$. Then, if $\delta_1<1$, the Miyadera Theorem ensures that the algebraic sum $-\Delta + b \cdot \nabla$ of domain $(1-\Delta)^{-1}L^1$ generates a quasi bounded $C_0$ semigroup in $L^1$. (This is one instance where the Miyadera Theorem is indispensable.) On the other hand, if $b \in \mathbf{F}_\delta$ with $\delta<1$, then the KLMN Theorem ensures that the quadratic form of $-\Delta + b \cdot \nabla$ of domain $W^{1,2}$ determines the (minus) generator of a quasi contraction $C_0$ semigroup in $L^2$. The former semigroup cannot be a quasi contraction in $L^2$, while the latter semigroup cannot be strongly continuous in $L^1$. The bases of these solution theories of \eqref{eq0} are, essentially, mutually exclusive.

One arrives at the problem of unification of the two solution theories of equation \eqref{eq0}, for instance, to treat  $b=b^{(1)}+b^{(2)}$, where $b^{(1)}$ is form-bounded and $b^{(2)}$ is from the Kato class. The two classes can be unified:
by the Heinz inequality and the interpolation, we have
$$
b^{(1)} \in \mathbf{F}_{\delta_1^2}, \quad b^{(2)} \in \mathbf{K}^{d+1}_{\delta_2^2} \quad \Rightarrow \quad b^{(1)}+b^{(2)} \in \mathbf{F}_\delta^{\scriptscriptstyle 1/2}, 
$$
where $\delta=\delta_1+\delta_2$ (we used the fact that 
$b \in \mathbf{F}^{\scriptsize 1/2}_\delta$ is equivalent to
$
\||b|^{\frac{1}{2}}(\lambda-\Delta)^{-\frac{1}{4}}\|_{2 \rightarrow 2} \leq \sqrt{\delta}
$
). 
However, as Theorem \ref{thm0}(\textit{ii}) shows, one should not be looking for the unification in the scale of $L^p$ solution spaces.
\end{remark}

\begin{example}Speaking of elementary examples of $\hat{b}$ in assertion (\textit{iii}), we single out the following class of time-homogeneous vector fields:
$$
\mathbf{M}'_\nu:=\big\{\hat{b}:\mathbb R^d \rightarrow \mathbb R^d \mid \hat{b}=\nabla(-\Delta)^{-1}W \text{ for some } W \in \mathbf{K}^d_\nu \big\},
$$
where (time-homogeneous) $W \in \mathbf{K}^d_\nu$ if and only if $\|(-\Delta)^{-1}|W|\|_\infty \leq \nu$.

To see that any $\hat{b}$ from $\mathbf{M}'_\nu$ satisfies the hypothesis of Theorem \ref{thm0__}, note that, for a given $\hat{b} \in \mathbf{M}'_\nu$, one automatically has ${\rm div\,}\hat{b} \in \mathbf{K}^d_\nu$. It is also clear that $\mathbf{M}'_\nu \subset \mathbf{K}^{d+1}_\nu$. 

In fact, any $\hat{b}$ from $\mathbf{M}_\nu'$ is form-bounded. Indeed, 
\begin{align*}
\langle \hat{b}^2|\psi|^2\rangle & =\langle(-\Delta)^{-1}W,W|\psi|^2\rangle-2 \langle(-\Delta)^{-1}W,\hat{b}|\psi||\nabla|\psi |\rangle \qquad \psi \in C_c^\infty(\mathbb{R}^d) \\
& \leq \nu\langle |W||\psi|^2\rangle+2\nu\langle \hat{b}^2|\psi|^2\rangle^\frac{1}{2}\|\nabla|\psi|\|_2 \\
& \leq \nu\langle |W||\psi|^2\rangle + \frac{1}{2}\langle \hat{b}^2|\psi|^2\rangle + 2\nu^2\|\nabla|\psi|\|_2^2.
\end{align*}
 Thus $\langle \hat{b}^2|\psi|^2\rangle\leq 2\nu \langle |W||\psi|^2\rangle+4\nu^2\|\nabla \psi\|_2^2$. It remains to note that $\mathbf{K}^d_\nu\subset \mathbf{F}_\nu$.
\end{example}

\begin{example}
One can modify the previous example by considering $\hat{b}=\hat{b}^{(1)}+\hat{b}^{(2)}$, where $\hat{b}^{(1)} \in \mathbf{M}'_\nu$ and
$$
\hat{b}^{(2)}:=(\phi_1(x)|x_2|^{-1+\varepsilon},\phi_2(x)|x_1|^{-1+\varepsilon},0,\dots,0), \quad \varepsilon \in ]0,1],
$$
where $\phi_1,\phi_2 \in C^\infty_c(\mathbb{R}^d)$. Clearly, $\hat{b}^{(2)}\in \mathbf{K}^{d+1}_\delta$, has divergence in $\mathbf{K}^{d}_\nu$, but is not in $L^2_{\loc}$ and hence is not form-bounded. This vector field satisfies the assumptions of assertion (d) in Theorem \ref{thm0__}.
\end{example}

\begin{remark}
In Theorem \ref{thm0__} we could consider
$$
b=\hat{b}+\tilde{b}+\check{b},
$$
where $\hat{b}$ and $\tilde{b}$ are as above, and $\check{b}=\nabla P$, $P \in [L^\infty(\mathbb R_+ \times \mathbb R^d)]^{d \times d}$, with ${\rm div\,}\check{b}$ form-bounded and in the Kato class. The corresponding analysis of Nash's $G$-functions follows the argument below and requires few modifications, see Remark \ref{P_remark}. Theorem \ref{thm0_} does not require a modification since, as is easily seen, $\check{b} \in \mathbf{M}_\delta$ with $\delta=\|P\|_\infty$.

\end{remark}

\begin{remark}
\label{rem_unif}
 The problem of unification of the class of form-bounded vector fields and the Kato class was addressed earlier in the simpler case of time-homogeneous vector fields $b=b(x)$ in \cite{S,Ki}.

First, it was noticed in \cite{S} that neither the form-boundedness nor the Kato class condition for $b=b(x)$ are responsible for the $(L^p,L^q)$ bound 
\begin{equation}
\label{LpLq}
\|u(t)\|_q \leq c_T t^{-\frac{d}{2}(\frac{1}{p}-\frac{1}{q})}\|f\|_p, \quad f \in L^p \cap L^q, \quad \frac{2}{2-\sigma^{-1}\sqrt{\delta}}< p<q \leq \infty.
\end{equation}
for the semigroup of $-\Delta + b \cdot \nabla$. In fact, it suffices to require that $b \in \mathbf{F}_\delta^{\scriptscriptstyle 1/2}$, i.e.\,$\||b|^{\frac{1}{2}}(\lambda-\Delta)^{-\frac{1}{4}}\|_{2 \rightarrow 2} \leq \sqrt{\delta}$.
It turned out that this new class of \textit{weakly form-bounded} vector fields contains the sums of vector fields $\mathbf{F}_{\delta^2}$ and $\mathbf{K}^{d+1}_{\delta^2}$.
\cite{S} also proposed a way to construct the semigroup generated by $-\Delta + b \cdot \nabla$, $b \in \mathbf{F}_\delta^{\scriptscriptstyle 1/2}$, $\delta<1$ in $L^2$ by ``guessing'' the resolvent of an appropriate operator realization $\Lambda_2$ of $-\Delta + b \cdot \nabla$:
\begin{equation}
\label{op_2}
\big(\zeta+\Lambda_2\big)^{-1}:=(\zeta-\Delta)^{-\frac{3}{4}}\bigl(1+ S \big)^{-1}(\zeta-\Delta)^{-\frac{1}{4}},
\end{equation}
where $\Real \zeta \geq \frac{d}{d-1}\lambda$, the operator $S(\zeta):=(\zeta-\Delta)^{-\frac{1}{4}}b \cdot \nabla (\zeta-\Delta)^{-\frac{3}{4}}$ is bounded in $L^2$ by $b \in \mathbf{F}_\delta^{\scriptscriptstyle 1/2}$. The RHS of \eqref{op_2} coincides, after expanding $(1+ S)^{-1}$ in the geometric series, with the Neumann series for $-\Delta + b \cdot \nabla$.

Next, it was shown in \cite{Ki} that equation \eqref{eq0} with $b \in \mathbf{F}_\delta^{\scriptscriptstyle 1/2}$ has a detailed  $L^p$ regularity theory for $p \in I_\delta$, where the open interval $I_\delta$, centered around $2$, expands to $]1,\infty[$ as $\delta \downarrow 0$. However one has to guess the resolvent differently:
\begin{equation}
\label{op_p}
\big(\zeta+\Lambda_p\big)^{-1}:=(\zeta-\Delta)^{-1}-(\zeta-\Delta)^{-\frac{1}{2}-\frac{1}{2q}}Q_p(q)\big(1+T_p\big)^{-1}R_p(r)(\zeta-\Delta)^{-\frac{1}{2r'}}, 
\end{equation}
where $1 \leq r<p<q<\infty$, $T_p(\zeta):=b^{\frac{1}{p}}\cdot \nabla (\zeta-\Delta)^{-1}|b|^{\frac{1}{p'}} \in \mathcal B(L^p)$ and $Q_p(q)$, $R_p(r) \in \mathcal B(L^p)$ are such that one obtains again, after expanding $(1+T_p)^{-1}$ in the geometric series, the Neumann series for $-\Delta + b \cdot \nabla$. (Note that the direct analogue of \eqref{op_2} in $L^p$ requires a much more restrictive condition: $|b|$ is in the weak $L^d$ space.) From \eqref{op_p}, one obtains right away $L^p$ regularity of the $1+\frac{1}{q}$-order spatial derivatives of solutions to the corresponding parabolic equation, hence the corresponding Feller semigroup. See also further developments in \cite{KiS_theory}. The Feller semigroup determines, for every starting point, a weak solution to the corresponding SDE \cite{KiS_BM}. The crucial point here is that one works in $L^p$ while keeping intact the $L^2 \rightarrow L^2$ assumption on the vector field $b$ (i.e.\,weak form-boundedness) and hence the class of its admissible singularities, except for requiring a smaller $\delta$. 

The proof that the operator-valued function in the RHS of \eqref{op_2} determines the resolvent of the generator of a $C_0$ semigroup is delicate. For \eqref{op_p} the situation is more difficult. In both proofs, one needs the Trotter Approximation Theorem and Hille's theory of pseudo-resolvents. Both proofs depend crucially on the holomorphy of the constructed semigroup since the latter are only quasi-bounded.

This paper concerns with the time-inhomogeneous case $b=b(t,x)$, which presents the next level of difficulty.
Having at hand the evolution family for \eqref{eq0} in $\mathcal W^{\frac{1}{2},2}$, provided by Theorem \ref{thm0}(\textit{ii}), we now approach the problem of constructing the evolution family in $L^2$ as a separate problem. The construction in \cite{S,Ki} of the semigroup directly in $L^2$ and in $L^p$ can be viewed now as solving
several problems at the same time. We expect that the spatial $\mathcal W^{1+1/q,p}$ regularity of the evolution family constructed in Theorem \ref{thm0}, for $q>p$ and $p$ large, can be obtained with additional effort, as well as the ensuing Feller property and a weak well-posedness of the corresponding SDE.  
\end{remark}

\appendix

\section{}

\label{appg}

Below we obtain the energy inequality \eqref{ei2} on a fixed interval $0 \leq s<t \leq T$ assuming that $b$ satisfies \eqref{weak_fbd_g} with $g \in L^2_{\loc}(\mathbb R_+)$.
We will be working at the a priori level, i.e.\,with $b$, $f$ additionally assumed to be bounded and smooth, and $u(s)=f$.

We use notations introduced in the beginning of the proof of Theorem \ref{thm0}.

Multiplying \eqref{eq0} by $(\lambda-\Delta)^{\frac{1}{2}}u$ and integrating, we obtain 
$$
\|u(t)\|_{H}^2 + 2\int_s^t \|u(r)\|_{H_+}^2 dr + 2\int_s^r \langle b(r) \cdot \nabla u,(\lambda-\Delta)^{\frac{1}{2}}u\rangle dr \leq \|f\|_{H}^2.
$$
The term to control:
\begin{align*}
\big|\langle b(t) \cdot \nabla u,(\lambda-\Delta)^{\frac{1}{2}}u\rangle \big| & \leq \big\||b(t)|^{\frac{1}{2}}|\nabla u|\big\|_2\big\||b(t)|^{\frac{1}{2}}|(\lambda-\Delta)^{\frac{1}{2}}u|\big\|_2.
\end{align*}
By \eqref{weak_fbd_g},
\begin{align*}
\||b(t)|^{\frac{1}{2}}|(\lambda-\Delta)^{\frac{1}{2}}u|\big\|^2_2 & \leq \delta \|(\lambda-\Delta)^{\frac{1}{4}}|(\lambda-\Delta)^{\frac{1}{2}}u|\|_2^2 + g(t)\|u\|^2_{\mathcal W^{1,2}} \\
& (\text{we are using the Beurling-Deny inequality}) \\
& \leq \delta \|u\|^2_{H_+} + g(t)\|u\|^2_{\mathcal W^{1,2}}.
\end{align*}
Similarly, a variant of the Beurling-Deny inequality: $\|(\lambda-\Delta)^{\frac{1}{4}}|\nabla u|\|_2 \leq \|(\lambda-\Delta)^{\frac{1}{4}}\nabla u\|_2$ and integration by parts yield
$$\big\||b(t)|^{\frac{1}{2}}|\nabla u|\big\|_2^2 \leq \delta \|u\|^2_{H_+} + g(t)\|u\|^2_{\mathcal W^{1,2}}.$$
Finally, we estimate $g(t)\|u\|^2_{\mathcal W^{1,2}} \leq g(t)\|u\|_{H_+}\|u\|_{H}$, so
$$
\int_s^t g(r)\|u\|^2_{\mathcal W^{1,2}} dr \leq \varepsilon \int_s^t \|u(r)\|^2_{H_+} dr  + \frac{1}{4\varepsilon}\int_s^t g^2(r)\|u\|_{H}^2 dr.
$$
Hence 
$$
\|u(t)\|_{H}^2 + 2(1-\delta-\varepsilon)\int_s^t \|u(r)\|_{H_+}^2 dr - \frac{1}{4\varepsilon}\int_s^t g^2(r)\|u\|_{H}^2 dr \leq \|f\|_{H}^2,
$$
where $\varepsilon$ is sufficiently small so that $1-\delta-\varepsilon>0$. It follows that
$$
\sup_{r \in [0,t]}\|u(r)\|_{H}^2 + 2(1-\delta-\varepsilon)\int_s^t \|u(r)\|_{H_+}^2 dr \leq \frac{1}{4\varepsilon}\int_s^t g^2(r)\|u\|_{H}^2 dr + \|f\|_{H}^2,
$$
so, assuming first that $s$, $t$ are sufficiently close so that $\frac{1}{4\varepsilon}\int_s^t g^2(r)dr<1$, and then using the reproduction property, we obtain
$$
C_1\sup_{r \in [0,t]}\|u(r)\|_{H}^2 + C_2\int_s^t \|u(r)\|_{H_+}^2 dr\leq \|f\|_{H}^2 
$$
for appropriate $C_1$, $C_2>0$, as required.

\bigskip

\section{Proof of Proposition \ref{prop1}}

\label{lions_app}

1. Fix some $[s_1,t_1] \subset ]s,T[$. Let us show that $\partial_t u \in L^2([s_1,t_1],H_-)$. Put
$$
F_t(\varphi):=\langle (\lambda-\Delta)^{\frac{3}{4}} u, (\lambda-\Delta)^{\frac{3}{4}}\varphi \rangle + \langle b(t)\cdot \nabla u,(\lambda-\Delta)^{\frac{1}{2}}\varphi\rangle \qquad \varphi \in L^2_{com}(]s,T[,H_+).
$$
By Proposition \ref{lem_fbd_est}, 
$$
|F_t(\varphi)| \leq (1+\delta) \|u(t)\|_{\mathcal W^{\frac{3}{2}{2}}}\|\varphi(t)\|_{\mathcal W^{\frac{3}{2}{2}}}.
$$
Hence by the Riesz Representation Theorem there exists a unique $w(t) \in H_+$ such that
$$
F_t(\varphi))=\langle (\lambda-\Delta)^{\frac{3}{4}}w(t),(\lambda-\Delta)^{\frac{3}{4}}\varphi(t) \rangle,
$$
where $\|w(t)\|_{H_+} \leq (1+\delta)\|u(t)\|_{H_+}$, so $w \in L^2([s_1,t_1],H_+)$.

 In terms of $w(t)$, the hypothesis that $u$ is a weak solution becomes
$$
-\int_s^T \langle u(t),\partial_t \varphi \rangle_{H} dt + \int_s^T \langle w(t),\varphi(t) \rangle_{H_+} dt=0. 
$$
In particular, taking $\varphi=\psi \eta$, where $\psi \in H_+$, $\eta \in C_c^\infty(]s_1,t_1[)$, we have
$$
-\int_{s_1}^{t_1} \langle u(t),\psi \rangle_{H} \eta' dt + \int_{s_1}^{t_1} \langle w(t),\psi \rangle_{H_+} \eta dt=0.
$$
Since 
$$
\big| \int_{s_1}^{t_1} \langle w(t),\psi \rangle_{H_+} \eta dt \big| \leq \|\psi\|_{H_+}\|w\|_{L^2([s_1,t_1],H_+)}\|\eta\|_{L^2[s_1,t_1]},$$
we have
$$
\big|\int_{s_1}^{t_1} \langle u(t),\psi \rangle_{H} \eta' dt \big| \leq  (1+\delta)\|\psi\|_{H_+}\|u\|_{L^2([s_1,t_1],H_+)} \|\eta\|_{L^2[s_1,t_1]}.
$$
It follows that
$
\frac{d}{dt}\langle u(t),\psi \rangle_{H} \in L^2[s_1,t_1]$, $\psi \in H_+
$
and, furthermore,
$$
\big\| \frac{d}{dt}\langle u(t),\psi \rangle_{H} \big\|_{L^2[s_1,t_1]} \leq  (1+\delta)\|\psi\|_{H_+}\|u\|_{L^2([s_1,t_1],H_+)}. 
$$
Hence there exists $\partial_t u \in L^2([s_1,t_1],H_-)$ such that
$$
-\int_s^T \langle \partial_t u(t),\psi \rangle_{H_-,H_+}  \eta dt =  \int_s^T \langle u(t),\psi\rangle_{H} \eta ' dt,
$$
where $\langle \cdot,\cdot \rangle_{H_-,H_+}$ denotes the $H_+$, $H_-$ pairing,
so
\begin{align*}
-\int_s^T \langle \partial_t u(t),\psi \eta \rangle_{H_-,H_+} dt & =  \int_s^T \langle u(t),(\psi \eta)'\rangle_{H}  dt \\
& =  \int_s^T \big(\langle (\lambda-\Delta)^{\frac{3}{4}} u, (\lambda-\Delta)^{\frac{3}{4}}\psi \eta \rangle + \langle b(t)\cdot \nabla u,(\lambda-\Delta)^{\frac{1}{2}}\psi \eta \rangle\big) dt.
\end{align*}
Since $\{\psi \eta \mid \psi \in H_+, \eta \in C_c^\infty(]s_1,t_1[)\}$ is dense in $L^2([s_1,t_1],H_+)$, we have
\begin{align*}
-\int_s^T \langle \partial_t u(t),\varphi \rangle_{H_-,H_+} dt  =  \int_s^T \big(\langle (\lambda-\Delta)^{\frac{3}{4}} u, (\lambda-\Delta)^{\frac{3}{4}}\varphi \rangle + \langle b(t)\cdot \nabla u,(\lambda-\Delta)^{\frac{1}{2}}\varphi \rangle\big) dt
\end{align*}
for all $\varphi \in L^2([s_1,t_1],H_+)$. 

In particular, taking $\varphi=u$ and using $b \in L^\infty\,\mathbf F^{\scriptscriptstyle 1/2}$ as in the proof of Proposition \ref{lem_fbd_est}, we have
\begin{equation}
\label{ei}
\int_{s_1}^{t_1} \langle \partial_t u(t),u \rangle_{H_-,H_+} dt  + (1-\delta)\int_{s_1}^{t_1} \|(\lambda-\Delta)^{\frac{3}{4}} u\|_2^2 dt \leq 0.
\end{equation}

2. We  put $u_\varepsilon:=\xi_\varepsilon \ast u$, where $\xi_\varepsilon$ is a Friedrichs mollifier of compact support. Since $H_-$, $H_+$ are separable spaces, we have
$u_\varepsilon(t) \rightarrow u(t)$ in $H_+$ for a.e.\,$t \in ]s,T[$, $u_\varepsilon \rightarrow u$ in $L^2([s_1,t_1],H_+]$, $u'_\varepsilon \rightarrow u'$ in $L^2([s_1,t_1],H_-]$

Further, we have
\begin{align*}
\frac{d}{dt}\|u_\varepsilon(t)-u_\delta(t)\|_{H} & = \frac{d}{dt}\|(\lambda-\Delta)^{\frac{1}{4}}u_\varepsilon(t)-(\lambda-\Delta)^{\frac{1}{4}}u_\delta(t)\|_{2} \\
& = 2 \langle (\lambda-\Delta)^{\frac{1}{4}}\partial_t u_\varepsilon(t) - (\lambda-\Delta)^{\frac{1}{4}}\partial_t u_\delta(t), (\lambda-\Delta)^{\frac{1}{4}}u_\varepsilon(t) - (\lambda-\Delta)^{\frac{1}{4}} u_\delta(t)\rangle
\end{align*}
so
\begin{align*}
&\|u_\varepsilon(t_1)-u_\delta(t_1)\|_{H} - \|u_\varepsilon(s_1)-u_\delta(s_1)\|_{H} \\
& = 2 \int_{s_1}^{t_1} \langle (\lambda-\Delta)^{-\frac{1}{4}}\partial_r u_\varepsilon - (\lambda-\Delta)^{-\frac{1}{4}}\partial_r u_\delta, (\lambda-\Delta)^{\frac{3}{4}}u_\varepsilon - (\lambda-\Delta)^{\frac{3}{4}} u_\delta\rangle dr.
\end{align*}
Hence, fixing $s_1 \in ]s,T[$ such that  $u_\varepsilon(s_1) \rightarrow u(s_1)$ in $H_+$, we obtain
$$
\limsup_{\varepsilon, \delta \downarrow 0}\sup_{t \in [s_1,t_1]}\|u_\varepsilon(t)-u_\delta(t)\|_{H} \leq \int_0^1 \|\partial_r u_\varepsilon - \partial_r u_\delta\|_{H_-}^2 dr + \int_{s_1}^{t_1} \|u_\varepsilon -  u_\delta\|_{H_+}^2 dr \rightarrow 0
$$
as $\varepsilon,\delta \downarrow 0$. It follows that $\{u_\varepsilon\}$ converges in $L^\infty([s_1,t_1],H)$ to $u$, and so  $u \in C([s_1,t_1],H)$. This gives (\textit{ii}).

3. Finally, we note that $\frac{d}{dt}\|u(t)\|^2_{H}=2\langle u'(t),u(t)\rangle_{H}$, as follows from
$$
\|u_\varepsilon(t_1)\|^2_{H} - \|u_\varepsilon(s_1)\|^2_{H}=2\int_{s_1}^{t_1} \langle  (u_\varepsilon)',u_\varepsilon\rangle_{H} dr
$$
upon taking the limit $\varepsilon \downarrow 0$. Combining this with \eqref{ei} we obtain (\textit{iii}).

\bigskip

\section{}

\label{examples_sect}

1.~Let us first show that if $
b \in L^\infty(\mathbb R_+,L^d)$,
then for a.e.\,$t \in \mathbb R_+$
\begin{equation}
\label{fbb_2}
\||b(t)|\psi\|^2_2 \leq \delta \|(-\Delta)^{\frac{1}{2}} \psi\|^2_2, \quad \psi \in W^{1,2},
\end{equation}
for $\delta:=\sup_{t \in \mathbb R_+} \|b(t)\|_d^2<\infty$. Indeed,
\begin{align*}
\|b(t)\psi\|_2^2 
& \leq \|b(t)\|_d^2 \|\psi\|_{\frac{2d}{d-2}}^2\\
& (\text{we are applying the Sobolev Embedding Theorem}) \\
& \leq C_S\|b(t)\|_d^2 \|\nabla \psi\|_2^2,
\end{align*}
as claimed.
Now,  applying Heinz' inequality in \eqref{fbb_2}, we obtain $b \in L^\infty\mathbf{F}_{\sqrt{\delta}}^{\scriptscriptstyle 1/2}$, so $b \in \mathbf{MF}_{\sqrt{\delta}}$ by \eqref{incl}.

\medskip

If $b$ belongs to the critical Ladyzhenskaya-Prodi-Serrin class \eqref{LPS_c}, i.e.
$$
b \in L^p([0,\infty[,L^q), \quad \frac{d}{q}+\frac{2}{p} \leq 1, \quad p \geq 2, \quad q \geq d
$$
then we estimate
\begin{align*}
|b(t,x)|  =\frac{|b(t,x)|}{\langle |b(t,\cdot)|^q\rangle^{\frac{1}{q}}}\langle |b(t,\cdot)|^q\rangle^{\frac{1}{q}} \leq \frac{d}{q} \biggl(\frac{|b(t,x)|^q}{\langle |b(t,\cdot)|^q\rangle} \biggr)^{\frac{1}{d}} + \frac{2}{p}\bigl( \langle |b(t,\cdot)|^q\rangle^{\frac{1}{q}} \bigr)^{\frac{p}{2}},
\end{align*}
where the first term is in $ L^\infty([0,\infty[,L^d)$ and the second term is in $L^2([0,\infty[,L^\infty)$. In view of the previous example, it is clear that $b \in \mathbf{MF}_\delta$.

\medskip

2.~One has
$$
\sup_{t \in \mathbb R}\|b(t)\|_{M_{1+\varepsilon}} <\infty \text{ (see \eqref{morrey})} \quad \Rightarrow \quad b \in  L^\infty\mathbf{F}_{\delta}^{\scriptscriptstyle 1/2}
$$
with $\delta$ proportional to the Morrey norm $\sup_{t \in \mathbb R}\|b(t)\|_{M_{1+\varepsilon}}$,
see \cite[Theorem 7.3]{A}. So, in particular, if $\sup_{t \in \mathbb R}\|b(t)\|_{M_{1+\varepsilon}} <\infty$, then $b \in \mathbf{MF}_\delta$.

\bigskip

\end{document}